\documentclass[12pt]{amsart}
\usepackage{textgreek}
\usepackage[english]{babel}
\usepackage{graphicx}
\usepackage{caption}
\usepackage{float}
\usepackage[normalem]{ulem}
\useunder{\uline}{\ul}{}
\usepackage{rotating}
\usepackage{gensymb}
\usepackage{tikz}
\usepackage{fancyhdr}
\usepackage{amssymb}
\usepackage{amsmath}
\usepackage{amsthm}
\usepackage{amsfonts}
\usepackage[all]{xy}
\usepackage{mathrsfs}
\usepackage{latexsym}
\usepackage{mathdots}
\usepackage{setspace}
\usepackage{hyperref}
\usepackage{booktabs}
\usepackage{longtable}
\usepackage{tabularx}
\usetikzlibrary{calc}
\hypersetup{
    colorlinks=true,
    linkcolor=blue,
    filecolor=blue,      
    urlcolor=blue,
    citecolor=red,
}
\usepackage{url}
\usepackage[utf8x]{inputenc}
\usepackage{graphicx}
\graphicspath{{images/}}
\usepackage{fancyhdr}
\usepackage[a4paper, margin=0.9in]{geometry}
\usepackage{tikz-cd}
\usepackage{tikz}
\usetikzlibrary{chains}
\usepackage{enumitem}
\usepackage{nicematrix}
\usepackage[title]{appendix}
\usetikzlibrary{positioning}
\tikzcdset{
  every arrow/.append style={
    shorten <=0pt,
    shorten >=0pt
  }
}
\tikzcdset{
  every label/.append style={font=\scriptsize\rmfamily}
}

\DeclareMathAlphabet{\mathcal}{OMS}{cmsy}{m}{n}
\DeclareFontFamilySubstitution{LGR}{\rmdefault}{cm}
\DeclareMathOperator{\Sym}{Sym}
\DeclareMathOperator{\Mor}{Mor}
\DeclareMathOperator{\Bl}{Bl}
\DeclareMathOperator{\Hilb}{Hilb}
\DeclareMathOperator{\ev}{ev}
\DeclareMathOperator{\sm}{sm}
\DeclareMathOperator{\Pic}{Pic}

\newtheorem{theorem}{Theorem}[section]
\newtheorem{lemma}[theorem]{Lemma}
\newtheorem{prop}[theorem]{Proposition}

\theoremstyle{definition}
\newtheorem{defn}[theorem]{Definition}

\theoremstyle{remark}
\newtheorem{remark}[theorem]{Remark}
\newcommand{\bb}{\mathbb{P}}
\newcommand{\iic}{\text{ impose independent conditions }}
\title{10}								
\makeatletter
\let\thetitle\@title
\let\theauthor\@author
\makeatother

\begin{document}
\hyphenpenalty=5000
\tolerance=1000
\title{Finiteness of the Chow ring of $\mathcal{M}_{5,10}$}

\author{Yuhan Liu}
\maketitle

\begin{abstract}
In this paper, we prove that the rational Chow ring $A^*(\mathcal{M}_{5,10})$ is finitely generated as a $\mathbb{Q}$-algebra.
\end{abstract}

%\text{MSC classes: 14C15, 14C17}

\section{Introduction}

For the past century, the moduli space of genus $g$ curves has occupied a central role in algebraic geometry. This space admits a natural compactification, $\overline{\mathcal{M}}_g$, which parametrizes stable curves. By passing to the normalization of these singular stable curves, one naturally recovers smooth curves equipped with marked points at the preimages of the nodes. Therefore, it is very important to understand the Chow ring of the moduli spaces $\mathcal{M}_{g,n}$, which paramatrize smooth genus $g$ curves with $n$ distinct marked points. 

One of the natural questions about moduli spaces $\mathcal{M}_{g,n}$ is to determine the Chow ring of $\mathcal{M}_{g,n}$. In \cite{M4} \cite{M5} \cite{M6} \cite{M789}, the authors have proved that the rational Chow ring of $A^*(\mathcal{M}_{g})$ is tautological for $4 \leq g \leq 9$. Canning and Larson proved $A^*(\mathcal{M}_{3,n})$ is tautological for $n \leq 11$; $A^*(\mathcal{M}_{4,n})$ is tautological for $n \leq 11$; $A^*(\mathcal{M}_{5,n})$ is tautological for $n \leq 7$; $A^*(\mathcal{M}_{6,n})$ is tautological for $n \leq 5$ \cite{HS}. Furthermore, in \cite{Liu}, the author proved that $A^*(\mathcal{M}_{5,n})$ is tautological for $n=8$ and $n=9$. In particular, the Chow rings in the above cases are finitely generated. In this paper, we prove:

\begin{theorem}\label{theorem}
    The Chow ring of $\mathcal{M}_{5,10}$ is finitely generated as a $\mathbb{Q}$-algebra.
\end{theorem}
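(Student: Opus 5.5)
We follow the Canning--Larson strategy that \cite{Liu} used for $n=8,9$: stratify $\mathcal{M}_{5,10}$ by the geometry of the canonical model, and show that each stratum has a Chow ring generated by finitely many classes (for instance tautological classes, together with finitely many explicit cycles). Excision and the localization sequence
\[
A_*(Z)\to A_*(\mathcal{M}_{5,10})\to A_*(\mathcal{M}_{5,10}\smallsetminus Z)\to 0
\]
then glue the pieces. Concretely, $A^*(\mathcal{M}_{5,10})$ is finitely generated as a $\mathbb{Q}$-algebra once it is generated by the pullback of a finitely generated subring together with finitely many pushforwards of classes from closed strata, each of which has finitely generated Chow groups as modules over that subring.

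\textbf{Step 1: the strata.} Every genus $5$ curve is either trigonal, or its canonical model is a complete intersection of three quadrics in $\bb^4$; in addition there are loci with low-degree maps. We split off:
\begin{itemize}[label={}, leftmargin=0pt]
\item the hyperelliptic locus;
\item the trigonal locus, further stratified by Maroni invariant (the scroll $\mathbb{F}_1$ or $\mathbb{F}_3$);
\item the locus where the $10$ marked points fail to impose independent conditions on suitable linear systems, such as the canonical or bicanonical series, or the conditions that ten points lie on a rational normal quartic.
\end{itemize}
For each closed stratum $Z$ we find a parametrization by an open subset of a projective bundle, a Hilbert scheme, or a $\Hilb$-type quotient of a vector bundle over a classifying stack such as $B\mathrm{GL}_2$ or $B\mathrm{GL}_3$. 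The Chow ring of such a parametrization is finitely generated, so the pushforward of its classes gives finitely many module generators.

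\textbf{Step 2: the open stratum.} On the complement, consider the non-trigonal curves whose canonical models lie on a net of quadrics in $\bb^4$, with $10$ marked points in general position. By \cite{Liu}, the first five (or nine) marked points of a genus $5$ curve with $9$ points give a well-understood space. Adding the tenth point is a fiber of the universal curve $\mathcal{C}\to\mathcal{M}_{5,9}$, and $\mathcal{M}_{5,10}$ is an open subset of this fiber product. Its Chow ring is generated over $A^*(\mathcal{M}_{5,9})$ by $\psi_{10}$ and the diagonal classes, modulo the removed loci. This part is finitely generated.

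The real work is the boundary of the open stratum, namely the closed conditions on the tenth point; these are handled by Step 1. \cite{HS} and \cite{Liu} already had to excise such loci for fewer points. The main obstacle is that with $10$ points we exceed $3g-3=12$ minus the dimension count by which earlier arguments showed that the "bad" loci are themselves tautological. Some stratum, most likely ten points on a trigonal curve in special position relative to the scroll, or ten points cut out by a quadric section, admits no description with tautological classes. We only aim for finite generation, so for such a stratum we present it as a quotient $U/G$ with $U$ open in a representation of a linear group $G$. Then $A^*$ is a quotient of $A^*_G(\mathrm{pt})$, which is finitely generated. We would first try this for the most degenerate trigonal configuration, and check that the $\bb^1$-bundle and Maroni stratification there still yield a $G$-quotient presentation. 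If that presentation breaks down, we would further subdivide by incidences of the points with the directrix.
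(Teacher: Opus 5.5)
There is a genuine gap. Your outline never treats the locus where the difficulty actually lies, and the argument for the open part rests on an unjustified principle.

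\textbf{Step~2 does not work.} You assert that the Chow ring of the universal curve over $\mathcal{M}_{5,9}$ is generated over $A^*(\mathcal{M}_{5,9})$ by $\psi_{10}$ and diagonal classes. This is not a general fact about families of curves. Already over a point, $A^1$ of a genus-$5$ curve minus nine points is $\operatorname{Pic}(C)\otimes\mathbb{Q}$ modulo the classes of the $p_i$, which is infinite-dimensional over $\mathbb{C}$. You give no argument specific to this family, and controlling what happens when a point is added is exactly the problem.

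The open part is handled by a different mechanism. First remove the hyperelliptic and trigonal loci: classes supported there are tautological for $n\le 12$ by \cite{HS}, so your expectation that the obstruction sits in a trigonal stratum is misplaced. Then $C$ is a complete intersection of three quadrics in $\mathbb{P}^4$. When $p_1,\dots,p_{10}$ impose independent conditions on quadrics, the quadrics through them form a rank-$5$ bundle over a configuration space of ten points modulo $PGL_5$. The tuple $(C,p_1,\dots,p_{10})$ is then a point of an open subset of the Grassmann bundle of $3$-planes in this bundle (Corollary~2.9 of \cite{Liu}).

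\textbf{The missing idea} is a description of the complementary locus, where evaluation on quadrics drops rank and this parametrization breaks down. Step~1 only asserts that such strata admit some projective-bundle or $U/G$ presentation without producing one, and your $U/G$ fallback is aimed at the trigonal locus instead. The paper splits this locus in two.
\begin{enumerate}
\item If eight of the points lie on a hyperplane, fixing the hyperplane and the two remaining points again yields an open subset of a Grassmann bundle.
\item Otherwise every nine points are independent. The key structure theorem, Theorem~\ref{thm}, is proved by putting nine of the points on a flat limit of cubic scrolls and analyzing each of the six possible surfaces. It says the ten points are dependent exactly when they lie in the smooth locus of a degree-$5$ curve $E\subset\mathbb{P}^4$ of arithmetic genus $1$ with $\mathcal{O}_E(p_1+\cdots+p_{10})\cong\mathcal{O}_E(2)$. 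Here $E$ is an elliptic normal quintic or one of fourteen degenerations.
\end{enumerate}

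This realizes the dependent locus as the image of a proper surjection from an open subset of a Grassmann bundle $G(3,\mathcal{V})$ over the moduli space $\mathcal{M}_E$ of data $(E,p_1,\dots,p_{10},L)$. Then $A^*(\mathcal{M}_E)$ is finitely generated for two reasons. Its smooth part is open in $\mathcal{C}_{1,9}$, where $A^*(\mathcal{M}_{1,9})=A^*(\mathcal{M}_{1,10})=\mathbb{Q}$ by \cite{B}. Its singular strata are, up to finite \'etale covers, open subsets of tori. Without a classification of this kind for the dependent locus, your outline does not yield finite generation.
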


Our main tool to prove it is the following theorem: 

\begin{theorem}\label{thm}

Let $C\subset\bb^4$ be a smooth, canonically embedded curve of genus $5$, neither hyperelliptic nor trigonal. Let $p_1,\ldots,p_{10}\in C$ be distinct points, and assume that any nine of them impose independent conditions on quadrics. If $p_1,\ldots,p_{10}$ fail to impose independent conditions on quadrics, then there exists a curve $E\subset\bb^4$ of degree $5$ and arithmetic genus $1$, having one of the configurations in Table \ref{tab:configurations}, interpreted according to the convention below that table, such that
\[
  p_1,\ldots,p_{10}\in E_{\mathrm{sm}}
  \qquad\text{and}\qquad
  \mathcal O_E(p_1+\cdots+p_{10})
  \cong \mathcal O_E(2).
\]

Conversely, let $E\subset\bb^4$ be such a curve, and let $p_1,\ldots,p_{10}\in E_{\mathrm{sm}}$ be distinct points satisfying
\[
  \mathcal O_E(p_1+\cdots+p_{10})
  \cong \mathcal O_E(2).
\]
Then these points fail to impose independent conditions on quadrics and lie on a smooth complete intersection of three quadrics in $\bb^4$.

\end{theorem}

\tikzset{
  ec component/.style={
    circle, draw,
    minimum size=5.5mm,
    inner sep=0pt,
    font=\small
  },
  ec singularity/.style={
    rectangle, draw,
    minimum size=5.5mm,
    inner sep=0pt,
    font=\small
  },
  ec degree/.style={
    font=\scriptsize,
    inner sep=1pt
  }
}

% Uniform bounding box for the diagrams.
\newenvironment{ecdiagram}{%
  \begin{tikzpicture}[
    x=1cm, y=1cm,
    line width=.45pt,
    every label/.style={ec degree}
  ]
  \path[use as bounding box]
    (-1.7,-1.45) rectangle (1.7,1.45);
}{%
  \end{tikzpicture}
}

% Smooth elliptic quintic.
\newcommand{\ecSmooth}{%
  \begin{ecdiagram}
    \node[
      ec component,
      label=below:{$\deg 5$}
    ] at (0,0) {$1$};
  \end{ecdiagram}
}

% Irreducible nodal rational quintic.
\newcommand{\ecNodal}{%
  \begin{ecdiagram}
    \node[
      ec component,
      label=below:{$\deg 5$}
    ] (a) at (0,-.45) {$0$};

    \draw (a.65)
      .. controls (.48,1.15) and (-.48,1.15)
      .. (a.115);
  \end{ecdiagram}
}

% Two components meeting at two nodes.
% Arguments: upper degree, lower degree.
\newcommand{\ecTwoCycle}[2]{%
  \begin{ecdiagram}
    \node[ec component,label=above:{$\deg #1$}]
      (a) at (0,.55) {$0$};
    \node[ec component,label=below:{$\deg #2$}]
      (b) at (0,-.55) {$0$};

    \draw (a) to[bend left=38] (b);
    \draw (a) to[bend right=38] (b);
  \end{ecdiagram}
}

% Triangle.
% Arguments: top, bottom-left, bottom-right degrees.
\newcommand{\ecTriangle}[3]{%
  \begin{ecdiagram}
    \node[ec component,label=above:{$\deg #1$}]
      (a) at (0,.65) {$0$};
    \node[ec component,label=below:{$\deg #2$}]
      (b) at (-.75,-.55) {$0$};
    \node[ec component,label=below:{$\deg #3$}]
      (c) at (.75,-.55) {$0$};

    \draw (a)--(b)--(c)--(a);
  \end{ecdiagram}
}

% Four-component cycle.
\newcommand{\ecFourCycle}{%
  \begin{ecdiagram}
    \node[ec component,label=above:{$\deg 1$}]
      (a) at (-.65,.55) {$0$};
    \node[ec component,label=above:{$\deg 1$}]
      (b) at (.65,.55) {$0$};
    \node[ec component,label=below:{$\deg 1$}]
      (c) at (.65,-.55) {$0$};
    \node[ec component,label=below:{$\deg 2$}]
      (d) at (-.65,-.55) {$0$};

    \draw (a)--(b)--(c)--(d)--(a);
  \end{ecdiagram}
}

% Five-component cycle.
\newcommand{\ecFiveCycle}{%
  \begin{ecdiagram}
    \node[ec component,label=above:{$\deg 1$}]
      (a) at (0,.8) {$0$};
    \node[ec component,label=left:{$\deg 1$}]
      (b) at (-.8,.2) {$0$};
    \node[ec component,label=below:{$\deg 1$}]
      (c) at (-.5,-.75) {$0$};
    \node[ec component,label=below:{$\deg 1$}]
      (d) at (.5,-.75) {$0$};
    \node[ec component,label=right:{$\deg 1$}]
      (e) at (.8,.2) {$0$};

    \draw (a)--(b)--(c)--(d)--(e)--(a);
  \end{ecdiagram}
}

% Elliptic m-fold configuration.
%
% #1: position of the degree-zero label on the square.
% #2: list of x/y/degree/label-position for the components.
%
% Example:
% \ecElliptic{left}{0/.85/2/above,0/-.85/3/below}
%
% Optional argument: vertical shift (default 0mm).
\newcommand{\ecElliptic}[3][0mm]{%
  \begin{ecdiagram}
    \begin{scope}[overlay, xshift=1.8mm, yshift=#1]
      \node[
        ec singularity,
        label=#2:{$\deg 0$}
      ] (s) at (0,0) {$1$};

      \foreach \xx/\yy/\dd/\pos [count=\i] in {#3} {
        \node[
          ec component,
          label=\pos:{$\deg \dd$}
        ] (b\i) at (\xx,\yy) {$0$};

        \draw (s)--(b\i);
      }
    \end{scope}
  \end{ecdiagram}
}

% Numbered table cell.
\newcommand{\ecCell}[2]{%
  \begin{minipage}[c]{.30\textwidth}
    \centering
    \resizebox{\linewidth}{!}{#2}\par
    \smallskip
    {\small (#1)}
  \end{minipage}%
}

%\end{theorem}

% No table environment, \clearpage, or \newpage here.
\begingroup
\setlength{\LTpre}{6pt}
\setlength{\LTpost}{6pt}
\setlength{\tabcolsep}{4pt}

\begin{longtable}{@{}ccc@{}}

\caption{The fifteen degree-five genus-one configurations.}
\label{tab:configurations}\\
\toprule
\endfirsthead

% Header on subsequent pages.
\multicolumn{3}{c}{%
  \small Table~\thetable\ (continued)
}\\
\toprule
\endhead

% Footer on all pages except the last.
\midrule
\multicolumn{3}{r}{\small Continued on the next page}\\
\endfoot

% No additional footer on the last page.
\endlastfoot

\ecCell{1}{\ecSmooth}
&
\ecCell{2}{\ecNodal}
&
\ecCell{3}{\ecTwoCycle{1}{4}}
\\[2mm]
\midrule

\ecCell{4}{\ecTwoCycle{2}{3}}
&
\ecCell{5}{\ecTriangle{1}{2}{2}}
&
\ecCell{6}{\ecTriangle{1}{1}{3}}
\\[2mm]
\midrule

\ecCell{7}{\ecFourCycle}
&
\ecCell{8}{\ecFiveCycle}
&
\ecCell{9}{%
  \ecElliptic[-2mm]{above}{0/-.85/5/below}%
}
\\[2mm]
\midrule

\ecCell{10}{%
  \ecElliptic{left}{%
    0/.85/1/above,
    0/-.85/4/below%
  }%
}
&
\ecCell{11}{%
  \ecElliptic{left}{%
    0/.85/2/above,
    0/-.85/3/below%
  }%
}
&
\ecCell{12}{%
  \ecElliptic[-2mm]{below}{%
    -1/0/1/below,
    0/.85/1/above,
    1/0/3/below%
  }%
}
\\[2mm]
\midrule

\ecCell{13}{%
  \ecElliptic[-2mm]{below}{%
    -1/0/1/below,
    0/.85/2/above,
    1/0/2/below%
  }%
}
&
\ecCell{14}{%
  \ecElliptic[-2mm]{below}{%
    -1/-.15/1/below,
    -.6/.8/1/above,
    .6/.8/1/above,
    1/-.15/2/below%
  }%
}
&
\ecCell{15}{%
  \ecElliptic{above}{%
    -1/0/1/below,
    -.7/.8/1/above,
    .7/.8/1/above,
    1/0/1/below,
    0/-.85/1/below%
  }%
}
\\[2mm]
\bottomrule

\multicolumn{3}{@{}p{.94\textwidth}@{}}{%
  \footnotesize
  \textit{Diagram Convention.}
  Circles record the genera of the component normalizations, with component degrees indicated beside them. In configurations (2)--(8), edges represent ordinary nodes, as in the usual dual graph. In configurations (9)--(15), the degree-zero square labeled
  $1$ denotes an elliptic $m$-fold point, where $m$ is its valency; its incident edges record the branches. The square denotes a singularity, not an additional component of the image. The local-ring descriptions of the elliptic $m$-fold points are given in the Appendix.
}\\

\end{longtable}
\endgroup

\textbf{Proof technique and outline of the paper.}
In the paper by Samir Canning and Hannah Larson \cite{HS}, they proved that all classes in $\mathcal{M}_{5,n}$ supported on the hyperelliptic or trigonal locus are tautological for $n \leq 12$. By excision, it suffices to show that the Chow ring of the open locus $\mathcal{M}_{5,n} \setminus \mathcal{M}_{5,n}^3$ in $\mathcal{M}_{5,n}$ is finitely generated for $n=10$, where $\mathcal{M}_{g,n}^k$ is the locus of curves of gonality $\leq k$. By corollary 2.9 in \cite{Liu}, we only need to focus on the locus where $10$ points fail to impose independent conditions.

\begin{defn}
    We denote the locus in $\mathcal{M}_{5,10} \setminus \mathcal{M}_{5,10}^3$ where 9 of the marked points fail to impose independent conditions on quadrics in $\bb^4$ by $\mathcal{M}_\omega$. 
\end{defn}
By Proposition 2.1 in \cite{Liu}, this is the locus where 8 of the marked points lie on a hyperplane.

\begin{defn}
    We denote the locus in $\mathcal{M}_{5,10} \setminus \mathcal{M}_{5,10}^3$ where any 9 marked points impose independent conditions and all 10 marked points fail to impose independent conditions on quadrics by $\mathcal{M}'$.
\end{defn}

In Section \ref{Section2}, we prove the finiteness of $A^*(\mathcal{M}_\omega)$, by realizing it as an open set of the Grassmann bundle over the configuration space of the marked points. For the rest of the paper, we are going to characterize the locus $\mathcal{M}'$ and prove its Chow ring is finitely generated. In Section \ref{scrolls}, we reduce to the cases that the 10 marked points lie on the specializations of smooth cubic scrolls. In Section \ref{Section4}, we furthermore reduce to the cases that the 10 marked points must lie on one of the configurations in Table \ref{tab:configurations}. In Section \ref{Sectionproof}, we give the necessary and sufficient conditions (Theorem \ref{thm}) for 10 marked points failed to impose independent conditions on quadrics. In Section \ref{Section6}, we prove that $A^*(\mathcal{M}')$ is finitely generated by constructing a proper surjective map from an open set of a Grassmann bundle over a moduli space whose Chow ring is finitely generated. We then prove Theorem \ref{theorem}. The appendix contains some clarifications and proofs of the facts we directly use in the paper.

\textbf{Notation.} Throughout the paper, we use $A^*(\cdot)$ to represent the Chow ring with $\textit{rational}$ coefficients. When we say a ring is finitely generated, we mean that it is finitely generated as a $\mathbb{Q}$-algebra; when we say a group is finitely generated, we mean that it is finitely generated as a $\mathbb{Q}$-vector space.

\textbf{Convention.} For any vector bundle $\mathcal{K}$, we define its projectivization $\mathbb{P}\mathcal{K}:=\mathscr{P}\mathrm{roj}(\Sym^\bullet \mathcal{K}^\vee)$. 

\textbf{Characteristic hypothesis.} We work over an algebraically closed field of characteristic not 2, 3 or 5.

\textbf{Acknowledgments.} I would like to thank my PhD advisor Eric Larson for helpful guidance throughout this project and feedback on earlier drafts of this manuscript. I would also like to thank ChatGPT for turning my hand drawn figures into beautiful tikz diagrams, and catching various minor typos throughout the paper.

\section{Finiteness of $A^*(\mathcal{M}_w)$}\label{Section2}

Recall that $\mathcal{M}_\omega$ is the dependent locus where 8 of the marked points lie on a hyperplane. Furthermore, any 9 marked points cannot lie on a hyperplane. Therefore, $\mathcal{M}_\omega$ is the disjoint union of $\{\mathcal{M}_{\omega,\{i,j\}}\}_{\{1 \leq i < j \leq 10\}}$, which is the locus where the points except $p_i$ and $p_j$ lie on a hyperplane. There are \[\binom{10}{2}=45\] of them. 

\begin{prop}\label{prop2.1}
    The Chow ring $A^*(\mathcal{M}_{\omega,\{i,j\}})$ is finitely generated for any $1 \leq i < j \leq 10$. The Chow ring $A^*(\mathcal{M}_\omega)$, as the disjoint intersection of $\{A^*(\mathcal{M}_{\omega,\{i,j\}})\}_{\{1 \leq i < j \leq 10\}}$, is also finitely generated.
\end{prop}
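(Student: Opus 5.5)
By symmetry it suffices to treat $\{i,j\}=\{9,10\}$. On $\mathcal{M}_{\omega,\{9,10\}}$ the points $p_1,\dots,p_8$ span a hyperplane $H\subset\bb^4$, while $p_9,p_{10}\notin H$, since no nine marked points are coplanar. My plan is to realize this locus as an open subset of a Grassmann bundle over a configuration space, as announced in the outline. Once that is done, finite generation follows from two facts. First, Chow rings of Grassmann bundles are generated over the base by the Chern classes of the tautological bundles. Second, restriction to an open subset is surjective on Chow rings.

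\textbf{Step 1: reformulate via $H$.} Since $C$ is canonical, $H\cap C$ is a canonical divisor $K_C$ of degree $8$. The condition that $p_1,\dots,p_8$ lie on a hyperplane is therefore the condition $\mathcal O_C(p_1+\cdots+p_8)\cong\omega_C$. Equivalently, for a non-trigonal genus $5$ curve, the eight points are a complete hyperplane section. I will parametrize the data as follows. Take a canonically embedded curve $C$ containing eight fixed points in a fixed hyperplane section configuration, together with two further points. Concretely, $C$ is a complete intersection of three quadrics in $\bb^4$. The condition that $C$ pass through $p_1,\dots,p_{10}$ is linear on quadrics. Because any nine of the points impose independent conditions while the ten do not, the ten points impose exactly $14$ conditions on the $15$-dimensional space $H^0(\mathcal O_{\bb^4}(2))$. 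The ideal of $C$ is then a $3$-plane inside a $\geq 1$-dimensional space.

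That does not quite work, so I will instead use the following. The quadrics through $p_1,\dots,p_{10}$ form a space $V$ of fixed rank $r$ over the relevant configuration space $F$. Here $F$ is the space of ten points in $\bb^4$ with eight in a hyperplane, modulo $PGL_5$. Choosing the net of quadrics cutting out $C$ gives a point of $G(3,V)$. The open condition is that the net cuts out a smooth, non-trigonal curve containing the points.

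\textbf{Step 2: the base.} I will use $PGL_5$ to normalize $p_9,p_{10}$ and the hyperplane $H$. The quotient $F$ is then an open subset of $(H)^8$ modulo the stabilizer, up to a torus/affine group action. Its (equivariant) Chow ring is generated by hyperplane classes and diagonals, so it is finitely generated. I will check that $V$ has constant rank on the open set where any nine points impose independent conditions, so that it is a vector bundle there. Then $G(3,V)\to F$ is a Grassmann bundle with finitely generated Chow ring. $\mathcal{M}_{\omega,\{9,10\}}$ is the quotient of an open subset of it by the residual group, and the map to $\mathcal{M}_{\omega,\{9,10\}}$ is an isomorphism because the canonical embedding and its ideal are determined by $C$.

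\textbf{Main obstacle.} The hardest step is the constant-rank claim together with correctly handling the residual group action, so that the Chow ring of the quotient is still finitely generated (for instance via an equivariant Chow ring of a connected linear group acting on an affine/projective bundle). For the final sentence of Proposition \ref{prop2.1}: the strata $\mathcal{M}_{\omega,\{i,j\}}$ are disjoint, since nine points cannot lie on a hyperplane, and they are open and closed in $\mathcal{M}_\omega$. Hence $A^*(\mathcal{M}_\omega)=\prod A^*(\mathcal{M}_{\omega,\{i,j\}})$, a finite product of finitely generated algebras, which is finitely generated.
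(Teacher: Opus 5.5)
There is a genuine gap in Step 2: the base $F$ is the wrong space, and the constant-rank claim fails on it.

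First, the justification you give cannot work. On $\mathcal{M}_{\omega,\{9,10\}}$ the nine points $p_1,\dots,p_9$ \emph{fail} to impose independent conditions by definition. So ``the open set where any nine points impose independent conditions'' does not meet this locus at all. For the same reason, the count of $14$ conditions in Step 1 is wrong here: the ten points impose only $9$.

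More seriously, the relevant configurations do not form an open subset of $H^8$ modulo the stabilizer. Since $p_1+\cdots+p_8=C\cap H$ is a complete intersection of three quadrics in $H\cong\mathbb{P}^3$, Cayley--Bacharach forces every quadric of $H$ through seven of these points to contain the eighth. This is a codimension-$3$ condition on $H^8$. For eight general points of $H$, the quadrics of $H$ through them form only a pencil, so $V$ has rank $15-10=5$. Every net in $V$ then restricts to $H$ with nonzero kernel, so it contains a quadric $h\cdot\ell$, where $h$ is the equation of $H$. Its base locus therefore contains a curve in $H$, and no point of $G(3,V)$ over such a configuration gives a smooth canonical curve. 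The rank of $V$ jumps to $6$ exactly on the Cayley--Bacharach locus. Consequently $V$ is not a vector bundle on your $F$, and $\mathcal{M}_{\omega,\{9,10\}}$ is not open in $G(3,V)$. Restricting to the closed Cayley--Bacharach locus would need a separate argument that its Chow ring is finitely generated, which you do not supply.

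The missing idea is that $p_8$ is redundant data. The paper records only $p_1,\dots,p_7\in H$, ranging over the open set $U'\subset(\mathbb{P}\mathcal{W})^7$ of seven points imposing independent conditions on quadrics in $H$. This sits over $BG$, where $G\subset PGL_5$ is the stabilizer of $(H,p_9,p_{10})$. The nine points $p_1,\dots,p_7,p_9,p_{10}$ do impose independent conditions on quadrics in $\mathbb{P}^4$. Indeed, there are $8=5+3$ quadrics through the seven points, and the quadrics $h\cdot\ell$ already separate $p_9,p_{10}\notin H$. Hence the kernel $\mathcal{E}$ of the evaluation map is a rank-$6$ bundle. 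By Cayley--Bacharach every quadric in $\mathcal{E}$ already contains $p_8$, and $p_8$ is recovered from the net as the residual point of $C\cap H$. This realizes $\mathcal{M}_{\omega,\{9,10\}}$ as an open substack of $G(3,\mathcal{E})\to U'\to\mathbb{P}\mathcal{W}\to BG$, which gives finite generation.

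Your passage from $\mathcal{M}_\omega$ to the $45$ disjoint open-and-closed strata is fine.
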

\textit{Proof}. The argument is similar to that of Section 4 in \cite{Liu}.  For the convenience of the reader, we briefly indicate the main steps. Without loss of generality, we prove that the Chow ring $A^*(\mathcal{M}_{\omega,\{9,10\}})$ is finitely generated.\\

Using the same notation as in Section 3 in \cite{Liu}, we have the exact sequence \begin{equation}
0 \rightarrow \mathcal{O}_{\mathcal{M}_{\omega,\{9,10\}}} \rightarrow \mathcal{L}^\vee \otimes f_*\omega_f \rightarrow \mathcal{G} \rightarrow 0,
\end{equation}
where $\mathcal{G}=\mathcal{L}^\vee \otimes \mathcal{G}'$. Take $G \subset PGL_5$ to be the stabilizer of the tuple $(H,p_9,p_{10})$, where $H$ is the hyperplane spanned by $p_1, \dots, p_8$. After a proper choice of coordinates, $G$ is the subgroup of $PGL_5$ consisting of matrices of the form 
\[
\left(
\begin{array}{c|c|c}
1 & 0 & \begin{matrix}0&0&0\end{matrix} \\
\hline
0 & 1 & \begin{matrix}*&*&*\end{matrix} \\
\hline
0 & 0 & \\
0 & 0 & \mathrm{GL}_3 \\
0 & 0 &
\end{array}
\right).
\]
We denote the universal bundle over $BG$ by $\mathcal{W}'$, and the subbundle $\mathcal{W} \subset \mathcal{W}'$ generated by the constant sections $(0,1,0,0,0),(0,0,1,0,0),(0,0,0,1,0),(0,0,0,0,1)$. Following the notations in \cite{Liu}, we further define $\sigma_10':BG \rightarrow \mathbb{P}\mathcal{W}'$ to be the fixed 10-th marked point. 
There, we have the composition maps
\[
\begin{tikzcd}
\mathcal{M}_{\omega,{\{9,10\}}}
\arrow[r,"b_9"]
&
(\mathbb{P}\mathcal{W})^7
\arrow[r,"\eta_i"]
\arrow[dr]
&
\mathbb{P}\mathcal{W}
\arrow[r,hookrightarrow,"i"]
\arrow[d,"\gamma"]
&
\mathbb{P}\mathcal{W}'
\arrow[dl, bend right=15, "\gamma'"'{yshift=-3.2pt,xshift=1.5pt}]
\\
&&
BG
\arrow[ur,bend right=25,"\sigma_9'"{pos=0.62, yshift=-3.2pt,xshift=1pt}]
\arrow[ur,bend right=45,"\sigma_{10}'"'{pos=0.55, yshift=3pt, xshift=-1.5pt}]
\end{tikzcd}
\]

Since the first seven marked points together with 9-th marked point and the 10-th marked point impose independent condition on quadrics in $\mathbb{P}^4$, the evaluation map is surjective and its kernel $\mathcal{E}$ is a vector bundle. Therefore, we have the exact sequence
\begin{equation}
0 \rightarrow \mathcal{E} \rightarrow \overline{\gamma}^*\gamma'_* \mathcal{O}_{\mathbb{P}\mathcal{W}'}(2) \xrightarrow{\text{evaluation map}}\bigoplus^7_{i=1}\eta_i^*i^*\mathcal{O}_{\mathbb{P}\mathcal{W}'}(2)\bigoplus^{10}_{k=9}\overline{\gamma}^*\sigma_k^*\mathcal{O}_{\mathbb{P}\mathcal{W}'}(2) \rightarrow 
0.
\end{equation}

We thus have the composition map $$\mathcal{M}_{\omega,\{9,10\}} \subset G(3, \mathcal{E}) \rightarrow U' \subset (\mathbb{P}\mathcal{W})^7 \rightarrow \mathbb{P}\mathcal{W} \rightarrow BG,$$
where each collection of points in the open set $U'$ is seven points which impose
independent condition on spaces of quadrics in $\mathbb{P}^3$. Therefore, $A^*(\mathcal{M}_{\omega,\{9,10\}})$ is finitely generated. The same argument applies for other locus, we thus finish the proof. \qed

\medskip
\noindent
\textbf{Standing assumption.}
Throughout the remainder of the paper, unless otherwise stated, we assume that every set of 9 points of $\{p_1,\ldots,p_{10}\}$ imposes independent conditions on quadrics in $\bb^4$.

\section{Nondegenerate specializations of smooth cubic scrolls}\label{scrolls}
%In this section, we aassume that any 9 points impose independent conditions on quadrics.

By \cite{Coskun} (example A1 in Section 5), we know that for 9 general points in $\mathbb{P}^4$, there exists a cubic scroll containing them. Note that this statement is only true for 9 points in general position, which doesn't always happen in our case. Therefore, we also need to consider the cases when the 9 points are not in general position. 
\begin{lemma}
    For any 9 distinct points in $\bb^4$, they lie on a flat limit of smooth cubic scrolls. 
\end{lemma}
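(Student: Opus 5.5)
The plan is a closedness argument in a Hilbert scheme, combined with the general-position result of \cite{Coskun}. Let $\mathcal{H}$ be the closure, in $\Hilb(\bb^4)$, of the locus of smooth cubic scrolls (surfaces with Hilbert polynomial $\frac{3}{2}t^2+\frac{5}{2}t+1$ that are isomorphic to $\Bl_{pt}\bb^2$ embedded by $|2H-E|$). Then $\mathcal{H}$ is projective, and its points are exactly the flat limits of smooth cubic scrolls. Consider the incidence correspondence
\[
I=\{(S,q_1,\ldots,q_9)\;:\; S\in\mathcal{H},\ q_i\in S\}\subset \mathcal{H}\times(\bb^4)^9 .
\]
It is closed, since it is cut out by the universal family $\mathcal{S}\to\mathcal{H}$: we have $I=\mathcal{S}\times_{\mathcal{H}}\cdots\times_{\mathcal{H}}\mathcal{S}$, which is closed in $\mathcal{H}\times(\bb^4)^9$. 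Because $\mathcal{H}$ is proper, the projection $\pi\colon I\to(\bb^4)^9$ is proper, so its image $\pi(I)$ is closed.

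By \cite{Coskun} (Example A1 in Section 5), a general $9$-tuple of points in $\bb^4$ lies on a smooth cubic scroll. Hence $\pi(I)$ contains a dense open subset of $(\bb^4)^9$. Since it is also closed and $(\bb^4)^9$ is irreducible, $\pi(I)=(\bb^4)^9$. In particular, every $9$-tuple of distinct points lies on some $S\in\mathcal{H}$, that is, on a flat limit of smooth cubic scrolls.

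\textbf{Main obstacle.} The topology is formal; the substantive input is the general-position statement. To confirm it independently, compare dimensions. Smooth cubic scrolls in $\bb^4$ form an $18$-dimensional family: $\dim PGL_5-\dim\mathrm{Aut}(\Bl_{pt}\bb^2)=24-6$. So the incidence variety of scrolls together with $9$ points on them has dimension $18+18=36=\dim(\bb^4)^9$. Dominance then follows if a single fiber of $\pi$ over a smooth scroll configuration is finite. One can check this through a tangent-space computation, $h^0(N_S(-q_1-\cdots-q_9))=0$ for suitable points, or simply cite \cite{Coskun}.

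A secondary point is the meaning of ``flat limit''. If one insists that the limit be reached along a one-parameter family, use the valuative criterion. Given any $9$ points, take a curve $T$ in $(\bb^4)^9$ through that point whose generic member lies in the dense open set. Lift $T$ to $I$ after a finite base change, and take the limit in $\mathcal{H}$. This exhibits the limit surface as a flat limit of smooth cubic scrolls along a one-parameter family. The limit may be degenerate, for example a cone or a reducible or non-reduced surface; this is why the subsequent sections must analyze these specializations.
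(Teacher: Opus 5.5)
Your proof is correct and follows the paper's own argument: an incidence correspondence over the Hilbert scheme, properness of the projection to $(\bb^4)^9$, and Coskun's general-position result, which forces the closed image to be all of $(\bb^4)^9$. Your version is also the more careful one. You take $\mathcal{H}$ to be the closure of the smooth-scroll locus. The paper as literally written instead restricts the incidence to smooth scrolls, and that projection is not proper. Nor could you use all of $\Hilb^P(\bb^4)$: it has other components, e.g.\ a cubic surface in a hyperplane together with a line meeting it at one point. Your choice is exactly what makes the properness step valid and guarantees that the surface obtained is a limit of smooth scrolls.
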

\textit{Proof}. Note that the Hilbert polynomial $P(n)$ of a smooth cubic scroll is $\frac{3n^2+5n+2}{2}$. Let $\mathcal{H}$ be the Hilbert scheme $\Hilb^P(\bb^4)$, where $$P(n)=\frac{3n^2+5n+2}{2}.$$ Let $\mathcal{I} \subset \mathcal{H} \times (\bb^4)^9$ be the incidence locus $$\Bigl\{\left(S,q_1, \dots, q_9\right) \; \big| \; \text{S is a smooth cubic scroll and } q_i \in S \text{ for each } 1 \leq i \leq 9\Bigl\}.$$ Since the projection onto the second factor $p_2:\mathcal{I} \rightarrow (\bb^4)^9$ is proper, and its image contains a dense open, we conclude $p_2$ is surjective. That is, given any 9 distinct points in $\bb^4$, they must lie on a specialization of a smooth cubic scroll, i.e., a flat limit of a smooth cubic scroll. \qed

We study specializations of smooth cubic scrolls through their ruling maps. Note that a smooth cubic scroll can be viewed as an element in $\mathcal{M}_0(\mathbb{G}(1,4),3)$ and its specializations live in $\overline{\mathcal{M}}_0(\mathbb{G}(1,4),3)$, i.e., the collection of stable maps from a connected, at least nodal curve of genus 0 to $\mathbb{G}(1,4)$ of total degree 3. Note that the curve is reduced and each irreducible component must be isomorphic to $\bb^1$. Therefore, the only cases are as follows (via dual graph correspondence):

\[
\begin{tikzcd}[column sep=1.2em,row sep=1.2cm]
  |[draw,circle,inner sep=2.5pt,label=above:{\small\text{deg }3}]|0
\end{tikzcd}
\qquad
\begin{tikzcd}[column sep=1.2em,row sep=1.2cm]
  |[draw,circle,inner sep=2.5pt,label=above:{\small\text{deg }2}]|0
    \ar[-,from=1-1,to=1-2]
  &
  |[draw,circle,inner sep=2.5pt,label=above:{\small\text{deg }1}]|0
\end{tikzcd}
\qquad
\begin{tikzcd}[column sep=1.2em,row sep=1.2cm]
  |[draw,circle,inner sep=2.5pt,label=above:{\small\text{deg }1}]|0
    \ar[-,from=1-1,to=1-2]
  &
  |[draw,circle,inner sep=2.5pt,label=above:{\small\text{deg }1}]|0
    \ar[-,from=1-2,to=1-3]
  &
  |[draw,circle,inner sep=2.5pt,label=above:{\small\text{deg }1}]|0
\end{tikzcd}
\qquad
\begin{tikzcd}[column sep={1.5cm,between origins}, row sep={1.5cm,between origins}]
    & |[draw,circle,inner sep=2.5pt,label=above:{\small\text{deg }1}]|0 \\
  |[draw,circle,inner sep=2.5pt,label=above:{\small\text{deg }1}]|0
    \ar[-,from=2-1,to=2-2]
  &
  |[draw,circle,inner sep=2.5pt,label=below:{\small\text{deg }0}]|0
    \ar[-,from=2-2,to=2-3]
    \ar[-,from=2-2,to=1-2]
  & [-1.5em]
  |[draw,circle,inner sep=2.5pt,label=above:{\small\text{deg }1}]|0
\end{tikzcd}
\]

Let $f:X\to G(1,4)$ be the morphism corresponding to the quotient $\mathcal O_X^{\oplus 5}\twoheadrightarrow\mathcal E$.
Let
\[
\mathcal U
=
\{(\ell,p)\in \mathbb{G}(1,4)\times \bb^4 : p\in\ell\}
\]
be the incidence correspondence, and define
\[
Y:=X\times_{\mathbb{G}(1,4)}\mathcal U
\subset X\times \bb^4.
\]
In fact, we have $Y = \bb\mathcal{E}^\vee$, a $\bb^1$-bundle over $X$ whose fiber over $x\in X$ is the line represented by $f(x)$.
Let $\pi_2:Y\rightarrow \bb^4$ be the restriction of the second projection, and let $S\subset \bb^4$ be its image. Thus $S$ is the surface swept out by the lines parametrized by $f$. Our goal is to describe the resulting surfaces $S$ up to projective equivalence. Note that we only consider cases when $S$ is nondegenerate since they contain 9 marked points which impose independent conditions on quadrics.

Recall that $$\Mor\left(X, \mathbb{G}(1,4)\right)= \Bigl\{\mathcal{O}_X^{\oplus5} \twoheadrightarrow \mathcal{E}, \text{ where } \mathcal{E} \text{ is a vector bundle of rank 2 over } X\Bigl\},$$
so it suffices to consider all possible cases for $\mathcal{E}$.
Now we compute the image of the corresponding surface and its linear span over each component $\bb^1$ in each case. We use homogeneous coordinates $[x:y:z:w:r]$ on $\bb^4$. \\

\textbf{Case 1.} If the $\bb^1$ component has degree 3.\\
$\left(\mathrm{I}\right)$ $\mathcal E\simeq \mathcal O_{\bb^1}(1)\oplus\mathcal O_{\bb^1}(2).$\\

After a projective change of coordinates, we may take the basis of $H^0(\bb^1,\mathcal E)$ to be
\[
(s,0),\quad (t,0),\quad
(0,s^2),\quad (0,st),\quad (0,t^2),
\]
where $[s:t]$ are homogeneous coordinates on $\bb^1$.

For $x=[s_0:t_0]\in X$, the dual evaluation map $\mathcal E_x^\vee
\hookrightarrow H^0(X,\mathcal E)^\vee$ identifies $\mathcal E_x^\vee$ with the subspace
\[
\operatorname{Span}\bigl\{
(s_0,t_0,0,0,0),\,
(0,0,s_0^2,s_0t_0,t_0^2)
\bigr\}.
\]
Therefore, $f(x)\in\mathbb G(1,4)$ represents the line
\[
\ell_x=
\mathbf P\!\left(
\operatorname{Span}\bigl\{
(s_0,t_0,0,0,0),\,
(0,0,s_0^2,s_0t_0,t_0^2)
\bigr\}
\right)
\subset\bb^4.
\]
As $x$ varies in $X$, the homogeneous ideal of the surface is
\[
I =(zr-w^2,\ yz-xw,\ xr-yw).
\]
In particular, these lines sweep out the smooth cubic scroll. \\

$\left(\mathrm{II}\right)$ $\mathcal E\simeq \mathcal O_{\bb^1}\oplus\mathcal O_{\bb^1}(3).$\\
After a projective change of coordinates, we may take the basis of $H^0(\bb^1,\mathcal{E})$ to be
\[
(1,0),\quad (0,s^3),\quad
(0,s^2t),\quad (0,st^2),\quad (0,t^3),
\]
where $[s:t]$ are homogeneous coordinates on $\bb^1$.
Similar computation shows that $f(x) \in \mathbb{G}(1,4)$ represents the line 
\[
\ell_x=
\mathbf P\!\left(
\operatorname{Span}\bigl\{
(1,0,0,0,0),\,
(0,s_0^3,s_0^2t_0,t_0^2s_0,t_0^3)
\bigr\}
\right)
\subset\bb^4.
\]
As $x$ varies in $X$, the homogeneous ideal of the surface is
\[
I=(yr-zw,\ z^2-yw,\ w^2-zr).
\]
In particular, these lines sweep out a cone over the twisted cubic, which is a singular cubic scroll. \\

\textbf{Case 2.} If the $\bb^1$ component has degree 2.\\
$\left(\mathrm{I}\right)$ $\mathcal E\simeq \mathcal O_{\bb^1}(1)\oplus\mathcal O_{\bb^1}(1).$\\

After a projective change of coordinates, we may take the basis of $H^0(\bb^1,\mathcal{E})$ to be
\[
(s,0),\quad (t,0),\quad
(0,s),\quad (0,t)
\]
where $[s:t]$ are homogeneous coordinates on $\bb^1$.
Similar computation shows that $f(x) \in \mathbb{G}(1,4)$ represents the line 
\[
\ell_x=
\mathbf P\!\left(
\operatorname{Span}\bigl\{
(s_0,t_0,0,0),\,
(0,0,s_0,t_0)
\bigr\}
\right)
\subset\bb^3.
\]
As $x$ varies in $X$, the homogeneous ideal of the surface is
\[
I=(xw-yz).
\]
In particular, these lines sweep out a smooth quadric surface, whose linear span is a $\bb^3$. \\

$\left(\mathrm{II}\right)$ $\mathcal E\simeq \mathcal O_{\bb^1}\oplus\mathcal O_{\bb^1}(2).$\\

After a projective change of coordinates, we may take the basis of $H^0(\bb^1,\mathcal{E})$ to be
\[
(1,0),\quad (0,s^2),\quad
(0,st),\quad (0,t^2)
\]
where $[s:t]$ are homogeneous coordinates on $\bb^1$.
Similar computation shows that $f(x) \in \mathbb{G}(1,4)$ represents the line 
\[
\ell_x=
\mathbf P\!\left(
\operatorname{Span}\bigl\{
(1,0,0,0),\,
(0,s_0^2,s_0t_0,t_0^2)
\bigr\}
\right)
\subset\bb^3.
\]
As $x$ varies in $X$, the homogeneous ideal of the surface is
\[
I=(z^2-yw).
\]
In particular, these lines sweep out a singular quadric surface, whose linear span is a $\bb^3$. \\

\textbf{Case 3.} If the $\bb^1$ component has degree 1.\\
We have $\mathcal E\simeq \mathcal O_{\bb^1}\oplus\mathcal O_{\bb^1}(1).$\\
After a projective change of coordinates, we may take the basis of $H^0(\bb^1,\mathcal{E})$ to be
\[
(1,0),\quad (0,s),\quad
(0,t)
\]
where $[s:t]$ are homogeneous coordinates on $\bb^1$.
Similar computation shows that $f(x) \in \mathbb{G}(1,4)$ represents the line 
\[
\ell_x=
\mathbf P\!\left(
\operatorname{Span}\bigl\{
(1,0,0),\,
(0,s_0,t_0)
\bigr\}
\right)
\subset\bb^2.
\]
As $x$ varies in $X$, these lines sweep out a plane, whose linear span is a $\bb^2$. \\

\textbf{Case 3.} If the $\bb^1$ component has degree 0.\\
We have $\mathcal E\simeq \mathcal O_{\bb^1}\oplus\mathcal O_{\bb^1}.$\\ In this case $H^0(\mathcal{E})$ is generated by two constant sections, thus they sweep out a line, whose linear span is $\bb^1$.

\begin{remark}\label{uniqueness}
    For either fixed vector bundle $\mathcal{E}$, the construction uses its complete spaces of sections. Therefore, in each choice of $\mathcal{E}$, any two resulting surfaces differ only by a choice of basis for the sections in $H^0(\bb^1,\mathcal{E})$, which corresponds precisely to an automorphism of its linear span. That is to say, the image of the corresponding surface over $\bb^1$ is unique up to projective equivalence.
\end{remark}

We sum up all the results in Table \ref{tab:image} below.

\begin{table}[htbp]
\centering
\caption{Images associated to the componentwise splitting types.}
\label{tab:image}
\renewcommand{\arraystretch}{1.3}

\begin{tabularx}{\textwidth}{@{}cc>{\centering\arraybackslash}Xc@{}}
\toprule
Degree
& Splitting type of $\mathcal E|_{X_i}$
& Image of the corresponding surface over $X_i$
& Linear span \\
\midrule
$0$
& $\mathcal O\oplus\mathcal O$
& A line
& $\bb^1$ \\

$1$
& $\mathcal O\oplus\mathcal O(1)$
& A plane
& $\bb^2$ \\

$2$
& $\mathcal O(1)\oplus\mathcal O(1)$
& A smooth quadric surface
& $\bb^3$ \\

$2$
& $\mathcal O\oplus\mathcal O(2)$
& A quadric cone
& $\bb^3$ \\

$3$
& $\mathcal O(1)\oplus\mathcal O(2)$
& A smooth cubic scroll
& $\bb^4$ \\

$3$
& $\mathcal O\oplus\mathcal O(3)$
& A cone over a twisted cubic
& $\bb^4$ \\
\bottomrule
\end{tabularx}

\smallskip
\begin{minipage}{\textwidth}
\footnotesize
Here $X_i\simeq\bb^1$, and $\mathcal O(a)$ denotes $\mathcal O_{\bb^1}(a)$. The degree is $\deg(\mathcal E|_{X_i})$. 
\end{minipage}
\end{table}

\textbf{Case 1.} In this case we have $\mathcal{E}$ is a rank 2 vector bundle of degree 3 over $\bb^1$. 

$\left(\mathrm{I}\right)$ $\mathcal{E}=\mathcal{O}_X(1) \oplus \mathcal{O}_X(2)$. By Table \ref{tab:image}, we know that the resulting surface is a smooth cubic scroll. We denote by $S_1$.

$\left(\mathrm{II}\right)$ $\mathcal{E}=\mathcal{O}_X \oplus \mathcal{O}_X(3)$. By Table \ref{tab:image}, we know that the resulting surface is a singular cubic scroll. We denote by $S_2$.

\textbf{Uniqueness}. In these two cases, uniqueness follows from Remark \ref{uniqueness}.\\

\textbf{Case 2.} In this case we have $\mathcal{E}$ is a rank 2 vector bundle of total degree 3 over a union of two $\bb^1$'s. 

\[
\begin{tikzpicture}[every node/.style={font=\small}]

%========================
% Left graph
%========================
\coordinate (P2) at (0,0);

% X-shape (two straight lines crossing at P1)
\draw (-2,-1.2) -- (P2) -- (1,0.6);
\draw (-1, 0.6) -- (P2) -- (2,-1.2);

% intersection point p
\fill (P2) circle (1.4pt);
\node[below=3pt] at (P2) {$p$};

  \node[below]  at (-2.2,-1.2) {$\mathcal O(1)\oplus\mathcal O(1)$};
  \node[below] at (2.2,-1.2)  {$\mathcal O\oplus\mathcal O(1)$};
  \path (1,0.9) node {$M_1$};
  \path (-1,0.9) node {$M_2$};
  \path (0,-2) node {$\left(1\right)$};

% labels

%========================
% Right graph
%========================
\begin{scope}[xshift=7cm]
  \coordinate (P1) at (0,0);

  \draw (-2,-1.2) -- (P1) -- (1,0.6);
  \draw (-1, 0.6) -- (P1) -- (2,-1.2);

  \fill (P1) circle (1.4pt);
  \node[below=3pt] at (P1) {$p$};

\node[below]  at (-2.2,-1.2) {$\mathcal O\oplus\mathcal O(2)$};
\node[below] at (2.2,-1.2)  {$\mathcal O\oplus\mathcal O(1)$};
\path (1,0.9) node {$L_1$};
\path (-1,0.9) node {$L_2$};

\path (0,-2) node {$\left(2\right)$};

\end{scope}

\end{tikzpicture}
\]

$\left(\mathrm{I}\right)$ $\mathcal{E}|_{M_1} \cong \mathcal{O}(1) \oplus \mathcal{O}(1)$ and $\mathcal{E}|_{M_2} \cong \mathcal{O} \oplus \mathcal{O}(1)$. From Table \ref{tab:image}, $M_1$ sweep out a smooth quadric surface $Q$ and $M_2$ sweeps out a plane $P$. Since $M_1 \cap M_2 = p$, we know that $Q \cap P$ is a ruling of $Q$. Nondegeneracy excludes the case when $P$ is tangent to $Q$. In this case, the variety $S_3$ is a union of a smooth quadric surface and a plane, intersecting at one of the rulings.

\textbf{Uniqueness}. For two surfaces of this configuration, we may assume the quadric surface $Q$ are in the same position and they span $H \subset \bb^4$. Automorphism of $H$ acts transitively on its lines of $Q$ under the $Q \simeq \bb^1 \times \bb^1$. We may furthermore assume the attached ruling are the same. The groups of automorphism of $\bb^4$ fixes $H$ pointwise acts transitively on the planes attached to the quadric.

$\left(\mathrm{II}\right)$ $\mathcal{E}|_{L_1} \cong \mathcal{O} \oplus \mathcal{O}(2)$ and $\mathcal{E}|_{L_2} \cong \mathcal{O} \oplus \mathcal{O}(1)$. From Table \ref{tab:image}, $L_1$ sweeps out a singular quadric surface $Q$ and $M_2$ sweeps out a plane $P$. Since $L_1 \cap L_2 = p$, we know that $Q \cap P$ is a ruling of $Q$. Nondegeneracy excludes the case when $P$ is tangent to $Q$. In this case, the variety $S_4$ is a union of a singular quadric and a plane, intersecting at the ruling.

\textbf{Uniqueness}. The argument is analogous to the previous one. For two surfaces of this configuration, we may assume the singular quadric surface $Q \subset H$ and the attached ruling are in the same position since automorphisms preserving a quadric cone act transitively on its ruling. The group of automorphism of $\bb^4$ fixes $H$ pointwise acts transitively on the planes attached to the singular quadric. \\

\[
\begin{tikzpicture}

% S_3
\begin{scope}[xshift=0cm, yshift=0cm, scale=1]

\draw[thick]
(0,0)
  .. controls (0.5,0.3) and (0.5,2.7) .. (0,3);

\draw[thick]
(0,0)
  .. controls (0.5,-0.3) and (1.5,-0.3) .. (2,0);

\draw[thick]
(0,0)
  .. controls (0.5,0.3) and (0.5,2.7) .. (0,3);

\draw[thick]
(2,0)
  .. controls (1.5,0.3) and (1.5,2.7) .. (2,3);

\draw[thick]
(0,3)
  .. controls (0.5,3.3) and (1.5,3.3) .. (2,3);

\draw[draw=white, line width=1pt]
(0.5,2.88) -- (-0.7,2.01);

\draw[draw=white, line width=1pt]
(-0.7,2.01) -- (1.5,0.11);

\draw[thick]
(0,3)
  .. controls (0.5,2.8) and (1.5,2.8) .. (2,3);

\draw[thick]
(0,0)
  .. controls (0.5,0.2) and (1.5,0.2) .. (2,0);

\draw[thin]
(0.5,2.88) -- (1.5,0.11);

\draw[thin]
(-0.7,2) -- (0.5,2.88);

\draw[thin]
(-0.7,2.01) -- (1.5,0.11);
\node at (-0.15,2.00) {$P$};
\node at (1.40,2.50) {$Q$};
\path (1,-1) node {$S_3$};

\end{scope}

% S_4
\begin{scope}[xshift=4.5cm, yshift=0cm, scale=1]

\draw[thick]
(0,0)
  .. controls (0.5,-0.2) and (1.5,-0.2) .. (2,0);

\draw[thick]
(0,3)
  .. controls (0.5,3.2) and (1.5,3.2) .. (2,3);

\draw[thick]
(0,3) -- (2,0);

\draw[thick]
(0,0) -- (2,3);

\draw[draw=white, line width=1pt]
(-0.7,2) -- (0.5,2.88);

\draw[draw=white, line width=1pt]
(-0.7,2.01) -- (1.52,0.12);

\draw[thick]
(0,0)
  .. controls (0.5,0.2) and (1.5,0.2) .. (2,0);

\draw[thick]
(0,3)
  .. controls (0.5,2.8) and (1.5,2.8) .. (2,3);

\draw[thin]
(0.5,2.88) -- (1.52,0.12);

\draw[thin]
(-0.7,2) -- (0.5,2.88);

\draw[thin]
(-0.7,2.01) -- (1.52,0.12);
\node at (-0.15,2.00) {$P$};
\node at (1.40,2.57) {$Q$};

\path (1,-1) node {$S_4$};

\end{scope}

\end{tikzpicture}
\]

\textbf{Case 3.} In this case we have $\mathcal{E}$ is a rank 2 vector bundle of total degree 3 over a chain of three $\bb^1$'s. Therefore, $\mathcal{E}|_{L_i} \simeq \mathcal{O} \oplus \mathcal{O}(1)$ for $1 \leq i \leq 3$. 

% preamble: \usepackage{tikz}

\[
\begin{tikzpicture}[every node/.style={font=\small}]

% intersection points
\coordinate (P) at (0,0);
\coordinate (Q) at (4,0);

% middle component (horizontal line), a chain link between P and Q
\draw (-0.6,0) -- (4.6,0);

% left component (slanted line through P)
\draw (0.3,0.3) -- (P) -- (-2.5,-2.5);

% right component (slanted line through Q)
\draw (3.7,0.3) -- (Q) -- (6.5, -2.5);

% mark points p and q
\fill (P) circle (1.4pt);
\fill (Q) circle (1.4pt);
\node[below=3pt] at (P) {$p$};
\node[below=3pt] at (Q) {$q$};
\path (0.5,0.5) node {$L_1$};
\path (3.5,0.5) node {$L_2$};
\path (2,-0.3) node {$L_3$};

\node[above] at (2,0) {$\mathcal O\oplus\mathcal O(1)$}; 
\node[left]  at (-1.1,-1.0) {$\mathcal O\oplus\mathcal O(1)$};   
\node[right] at (5.1,-1.0) {$\mathcal O\oplus\mathcal O(1)$};    

\end{tikzpicture}
\]

We find the variety $S_5$ by restricting $\mathcal{E}$ to each $\bb^1$. From Table \ref{tab:image}, each $L_i$ sweeps out a plane $P_i$. Since $p \neq q$, we have $P_1 \cap P_3$ and $P_2 \cap P_3$ meet at distinct lines $\ell_1$ and $\ell_2$ on $P_3$. Besides, $\ell_1$ and $\ell_2$ meet at a point. Thus $P_1, P_2, P_3$ meet at a point. In this case, the variety $S_5$ (showed below) is a union of three planes meeting at one point. 

\[
\begin{tikzpicture}[line width=0.9pt, scale=0.7]
\coordinate (O) at (0,0);
\coordinate (P) at (-1,-2.2);   
\coordinate (Q) at (1.6,-2.0); 
\draw (O)--(P)--(1,-3.0)--(Q)--cycle;

\coordinate (P1) at (-4.0,-2.0);
\coordinate (P2) at (-3.7,0.2);
\draw (O)--(P)--(P1)--(P2)--cycle;
\coordinate (Q1) at (3.3,-0.8);
\coordinate (Q2) at (2.3,1.5);
\draw (O)--(Q)--(Q1)--(Q2)--cycle;
\fill (O) circle (1.4pt);
% Labels of the planes
\node[above right, inner sep=3pt] at (P1) {$P_1$};
\node[above left,  inner sep=3pt] at (3.2, -1.0) {$P_2$};
\node[above left,  inner sep=3pt] at (1.2,-2.9) {$P_3$};

% Labels of the intersection lines
\path (O) -- (P)
  node[midway, left, inner sep=3pt] {$\ell_1$};
\path (O) -- (Q)
  node[midway, above right, inner sep=1pt] {$\ell_2$};
\end{tikzpicture}
\]

\textbf{Uniqueness}. Note that for every such configuration, we can find basis $\{e_i\}_{1 \leq i \leq 5}$ such that $P_1 = \bb \langle e_1, e_2, e_4\rangle, P_2 = \bb\langle e_1, e_3, e_5\rangle$ and $P_3=\bb \langle e_1,e_2,e_3 \rangle$. Thus any two choices differ by an automorphism of $\bb^4$. \\

\textbf{Case 4.} In this case we have $\mathcal{E}$ is a rank 2 vector bundle of total degree 3 over a union of four $\bb^1$'s (shown below), where $L$ is contracted under $f$. Therefore, $\mathcal{E}|_{L_i} \simeq \mathcal{O} \oplus \mathcal{O}(1)$ for $1 \leq i \leq 3$. 

\[
\begin{tikzpicture}[every node/.style={font=\small}, line width=0.9pt, scale=0.6]

% Vertical line L
\draw (0,-2.2) -- (0,2.2);
\node[above] at (0,2.2) {$L$};

% Three horizontal lines (slightly tilted like your sketch)
\draw (-1.2, 1.2) -- (4.2, 1.2);
\draw (-1.2, 0.0) -- (4.2, 0.0);
\draw (-1.2,-1.2) -- (4.2,-1.2);

% Labels L1, L2, L3 on the left
\node[left] at (-2, 1.2) {$L_1$};
\node[left] at (-2, 0.0) {$L_2$};
\node[left] at (-2,-1.2) {$L_3$};

% Bundle labels on the right
\node[right] at (4.2, 1.2) {$\mathcal O\oplus\mathcal O(1)$};
\node[right] at (4.2, 0.0) {$\mathcal O\oplus\mathcal O(1)$};
\node[right] at (4.2,-1.2) {$\mathcal O\oplus\mathcal O(1)$};

\end{tikzpicture}
\]

We find the image by restricting $\mathcal{E}$ to each $\bb^1$. From Table $\ref{tab:image}$, each $L_i$ sweep out a plane $P_i$. Since $L$ is contracted by $f$, these $P_i$ meet along a line $\ell.$ In this case, the variety $S_6$ (shown below) is a union of three planes meeting a line.

% preamble: \usepackage{tikz}

\[
\begin{tikzpicture}[line width=0.9pt, xscale=0.8, yscale=0.5]

\coordinate (A) at (-2.2,-5);
\coordinate (B) at (1,-3);
\coordinate (C) at (0,0);   
\coordinate (D) at (-3,-1.5);
\coordinate (I) at (-1.4,-0.7);

\coordinate (E) at (-2.2,1.9);  
\coordinate (F) at (-1,-1.8);  
\draw (C)--(E)--(F)--(B)--cycle;

\draw[dashed] (I) -- (C);
\draw (A) -- (D);
\draw (D) -- (I);
\draw (A) -- (B);
\coordinate (G) at (3.0,0.3);
\coordinate (H) at (3.2,-3);
\draw (C)--(G)--(H)--(B)--cycle;

% Labels of the planes
\node[below right, inner sep=3pt] at (-3,-1.3) {$P_1$};
\node[above,       inner sep=3pt] at (E) {$P_2$};
\node[below left,  inner sep=3pt] at (G) {$P_3$};

% Label of the common intersection line
\path (C) -- (B)
  node[midway, right, inner sep=3pt] {$\ell$};
\end{tikzpicture}
\]

\textbf{Uniqueness}. Note that for every such configuration, we can find basis $\{e_i\}_{1 \leq i \leq 5}$ such that $\ell = \bb \langle e_1, e_2\rangle, P_1 = \bb \langle e_1, e_2,e_3\rangle, P_2 = \bb\langle e_1, e_2,e_4\rangle$ and $P_3=\bb \langle e_1,e_2,e_5 \rangle$. Thus any two choices differ by an automorphism of $\bb^4$. \\
\begin{remark}
    Note that in all the cases above, the space of quadrics vanishing on each variety is always generated by three quadrics, i.e, $h^0(\mathcal{I}_{S_i}(2))=3$ for $1 \leq i \leq 6$. This property will be used again later in the proof.
\end{remark}

Below are all the nondegenerate specializations of the smooth cubic scroll, and a map from $S_i$ to $S_j$ means that $S_j$ is a specialization of $S_i$; see details in Table \ref{tab:cubic-surface-specializations} in Appendix \ref{A.2}. \\

\begin{tikzpicture}[line width=0.8pt, scale=0.5]
\begin{scope}[yshift=-2cm, scale=0.7]

%-----------------------------------------------------------------------------------------------------------------------
%S_1
\begin{scope}[rotate=95]
\draw (0,0) ellipse (2.2 and 0.8);
\draw (-2.5,2.5) -- (3,3.05);
\coordinate (L1) at (-2,2.55);
\coordinate (E1) at (-2.2,0);
\draw (L1) -- (E1);
\coordinate (L2) at (-1, 2.65);
\coordinate (E2) at (-7/5,  {24*sqrt(2)/55});
\draw (L2) -- (E2); 
\coordinate (L3) at (0.8, 2.83);
\coordinate (E3) at (0,  0.8);
\draw (L3) -- (E3);
\coordinate (L4) at (2.5,3);
\coordinate (E4) at (6/5,  {4*sqrt(85)/55});
\draw (L4) -- (E4);
\end{scope}

\path (-1,-3.5) node {$S_1$};
%\path (-1,-3) node {smooth cubic scroll};
\end{scope}
\begin{scope}[xshift=-0.8cm]
\draw[->] (1.5,0) -- (3.5,2);
\end{scope}

\begin{scope}[xshift=-0.8cm]
\draw[->] (1.5,-5) -- (3.5,-7);
\end{scope}

%-----------------------------------------------------------------------------------------------------------------------
%S_2
\begin{scope}[xshift=4cm, yshift=-7cm, scale=1.2]

\draw
(1,0)
    .. controls (1,-0.5) and (2,0) .. (2,2);

\draw[draw=white, line width=2pt]
(1,0)
    .. controls (1, 0.5) and (2,0) .. (2,-2);
    
\draw
(1,0)
    .. controls (1, 0.5) and (2,0) .. (2,-2);
\draw[very thin]
(0,0) -- (1,0);
\draw[thin]
(0,0) -- (1.9,-1);
\draw[thin]
(0,0) -- (2,-2);
\draw[thin]
(0,0) -- (2,2);
\draw[very thin]
(0,0) -- (1.9,1);
\path(1.2, -2.3) node {$S_2$};
\end{scope}

%-----------------------------------------------------------------------------------------------------------------------
%S_3
\begin{scope}[xshift=4cm, yshift=0cm, scale=1]

\draw
(0,0)
    .. controls (0.5,0.3) and (0.5,2.7) .. (0,3);

\draw
(0,0)
    .. controls (0.5,-0.3) and (1.5,-0.3) .. (2,0);

\draw
(0,0)
    .. controls (0.5,0.3) and (0.5,2.7) .. (0,3);

\draw
(2,0)
    .. controls (1.5,0.3) and (1.5,2.7) .. (2,3);

\draw
(0,3)
    .. controls (0.5,3.3) and (1.5,3.3) .. (2,3);

\draw[draw=white, line width=1pt]
(0.5,2.88) -- (-0.7,2.01);

\draw[draw=white, line width=1pt]
(-0.7,2.01) -- (1.5, 0.11);

\draw
(0,3)
    .. controls (0.5,2.8) and (1.5,2.8) .. (2,3);

\draw
(0,0)
    .. controls (0.5,0.2) and (1.5,0.2) .. (2,0);
    
\draw[thin]
(0.5,2.88) -- (1.5, 0.11);

\draw[thin]
(-0.7,2) -- (0.5, 2.88);
\draw[thin]
(-0.7,2.01) -- (1.5, 0.11);
\path(1,-1) node {$S_3$};
\end{scope}

\begin{scope}[xshift=5.5cm, yshift=-7cm]
\draw[->] (1.5,0) -- (3.5,2);
\end{scope}

\begin{scope}[xshift=5.5cm, yshift=7cm]
\draw[->] (1.5,-5) -- (3.5,-7);    
\end{scope}

%-----------------------------------------------------------------------------------------------------------------------
%S_4
\begin{scope}[xshift=10cm, yshift=-3.5cm]
\draw
(0,0)
    .. controls (0.5,-0.2) and (1.5,-0.2) .. (2,0);  

\draw
(0,3)
    .. controls (0.5,3.2) and (1.5,3.2) .. (2,3);

\draw
(0,3) -- (2,0);
\draw
(0,0) -- (2,3);

\draw[draw=white, line width=1pt]
(-0.7,2) -- (0.5, 2.88);
\draw[draw=white, line width=1pt]
(-0.7,2.01) -- (1.52, 0.12);

\draw
(0,0)
    .. controls (0.5,0.2) and (1.5,0.2) .. (2,0);
    \draw
(0,3)
    .. controls (0.5,2.8) and (1.5,2.8) .. (2,3);

\draw[thin]
(0.5,2.88) -- (1.52, 0.12);

\draw[thin]
(-0.7,2) -- (0.5, 2.88);
\draw[thin]
(-0.7,2.01) -- (1.52, 0.12);
\path(1, -1) node {$S_4$};
\end{scope}

\begin{scope}[xshift=12.5cm, yshift=-2cm]
\draw[->] (0,0) -- (2,0);    
\end{scope}

%-----------------------------------------------------------------------------------------------------------------------
%S_5
\begin{scope}[xshift=18.5cm, yshift=-1.5cm, scale=0.8]
 \coordinate (O) at (0,0);
\coordinate (P) at (-1,-2.2);   
\coordinate (Q) at (1.6,-2.0); 
\draw (O)--(P)--(1,-3.0)--(Q)--cycle;

\coordinate (P1) at (-4.0,-2.0);
\coordinate (P2) at (-3.7,0.2);
\draw (O)--(P)--(P1)--(P2)--cycle;
\coordinate (Q1) at (3.3,-0.8);
\coordinate (Q2) at (2.3,1.5);
\draw (O)--(Q)--(Q1)--(Q2)--cycle;
%\fill (O) circle (1.4pt);   
\path (0, -4) node{$S_5$};
\end{scope}

\begin{scope}[xshift=22cm, yshift=-2cm]
\draw[->] (0,0) -- (2,0);    
\end{scope}

%-----------------------------------------------------------------------------------------------------------------------
%S_6
\begin{scope}[xshift=27cm, yshift=-1cm, scale=0.7]
\coordinate (A) at (-2.2,-5);
\coordinate (B) at (1,-3);
\coordinate (C) at (0,0);   
\coordinate (D) at (-3,-1.5);
\coordinate (I) at (-1.4,-0.7);

\coordinate (E) at (-2.2,1.9);  
\coordinate (F) at (-1,-1.8);  
\draw (C)--(E)--(F)--(B)--cycle;

\draw[dashed] (I) -- (C);
\draw (A) -- (D);
\draw (D) -- (I);
\draw (A) -- (B);
\coordinate (G) at (3.0,0.3);
\coordinate (H) at (3.2,-3);
\draw (C)--(G)--(H)--(B)--cycle;    
\path(0.3,-5.5) node{$S_6$};
\end{scope}
\end{tikzpicture}
%-------------------------------------------------------

\begin{prop}
Let $C$ be a smooth curve which is a complete intersection of three quadrics in $\bb^4$, and we have 10 points $p_1, \dots, p_{10}$ on $C$. For $1 \leq i \leq 6$, we assume $p_1, \dots, p_9 \in S_i$ and $p_{10} \notin S_i$. If $p_1, \dots, p_9 $ impose independent conditions on quadrics in $\bb^4$, then $p_1, \dots, p_{10} \iic$on quadrics in $\bb^4$.
\end{prop}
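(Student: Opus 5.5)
The plan is a dimension count using the three quadrics vanishing on $S_i$, from the Remark that $h^0(\mathcal I_{S_i}(2))=3$ for $1\le i\le 6$. We have $h^0(\mathcal O_{\bb^4}(2))=15$. Since $p_1,\dots,p_9$ impose independent conditions, the space $V_9:=H^0(\mathcal I_{\{p_1,\dots,p_9\}}(2))$ has dimension $15-9=6$. It therefore suffices to exhibit a quadric in $V_9$ that does not vanish at $p_{10}$. Then $\dim V_{10}=5$, and $p_1,\dots,p_{10}$ impose independent conditions.

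\textbf{Step 1 (the candidate subspace).} Let $W:=H^0(\mathcal I_{S_i}(2))\subset V_9$, which has dimension $3$. Its base locus in $\bb^4$ is the relevant object. For each $S_i$ I will check from the explicit ideals of Section \ref{scrolls} that $S_i$ is cut out set-theoretically by its quadrics.
\begin{itemize}
\item For $S_1$ and $S_2$, the three listed quadrics are the $2\times 2$ minors of a $2\times 3$ matrix, so they generate the ideal and the base locus is exactly $S_i$.
\item For $S_3$ through $S_6$, the surface is a union of a quadric (or a plane) with planes, or a union of three planes in the described coordinate positions. In these cases I will write down the three quadrics explicitly in the coordinates from the Uniqueness paragraphs and verify that their common zero set is $S_i$. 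For example, for $S_6$ with $\ell=\bb\langle e_1,e_2\rangle$ the quadrics are $x_3x_4$, $x_3x_5$, $x_4x_5$, whose zero locus is the union of the three planes.
\end{itemize}

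\textbf{Step 2 (conclusion).} Since $p_{10}\notin S_i$ and $S_i$ is the base locus of $W$, some quadric $Q\in W$ has $Q(p_{10})\neq 0$. Because $W\subset V_9$, this $Q$ vanishes on $p_1,\dots,p_9$ but not on $p_{10}$. Hence the evaluation map at $p_{10}$ is nonzero on $V_9$, so $\dim V_{10}=5$, which finishes the argument. In this approach the smooth curve $C$ and the complete-intersection hypothesis are not really used. They only guarantee that the points are distinct and are compatible with the standing assumption.

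\textbf{Main obstacle.} The delicate step is the set-theoretic check in Step 1 for the reducible configurations $S_3,S_4,S_5$. There the three quadrics vanishing on $S_i$ might have an extra base locus, for instance an embedded line or an extra plane meeting the configuration. I would compute directly:
\begin{itemize}
\item For $S_3$, take $Q=\{xw-yz=0\}\subset\{r=0\}$ and a plane $P$ through the ruling $x=y=0$, say $P=\{x=y=w-\lambda r=0\}$ after normalization. The quadrics in the ideal $(r,\,xw-yz)\cap(x,\,y,\,\ldots)$ are then spanned by $xr$, $yr$, and $xw-yz$ adjusted by a multiple of $r$. Their zero set is exactly $Q\cup P$.
\item The cases $S_4$ and $S_5$ are handled by the same monomial-style computation.
\end{itemize}
If some case did produce extra base locus, I would instead use that the ideal of $S_i$ is generated in degree $2$ (it is the flat limit of the scroll ideal with constant $h^0(\mathcal I(2))=3$). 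This shows the quadrics define $S_i$ scheme-theoretically, which again yields a quadric nonvanishing at $p_{10}$.
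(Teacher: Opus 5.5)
Your argument is correct, and it takes a more direct route than the paper's.

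The paper first shows that $H^0(\mathcal{O}_{\mathbb{P}^4}(2))\to H^0(\mathcal{O}_{S_i}(2))$ is surjective. It uses $h^0(\mathcal{I}_{S_i}(2))=3$, upper semicontinuity along the specializations, and the check $h^0(\mathcal{O}_{S_6}(2))=12$; this gives $h^1(\mathcal{I}_{S_i}(2))=0$. It then chases three exact sequences to get $h^0(\mathcal{I}_{\Gamma/S_i}(2))=3$, then $h^0(\mathcal{I}_{S_i\cup p_{10}}(2))=2$ with vanishing $h^1$, and finally $h^0(\mathcal{I}_{\Gamma\cup p_{10}}(2))=2+3=5$. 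The middle step, ``since $p_{10}\notin S_i$, $h^0(\mathcal{I}_{S_i\cup p_{10}}(2))=2$'', is exactly your base-locus fact, which the paper uses without comment.

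Your version uses only $W\subset V_9$ and one element of $W$ not vanishing at $p_{10}$. It shows that $H^1(\mathcal{I}_{S_i}(2))=0$ is not needed for this proposition, and it isolates and actually checks the one geometric input. What the paper's detour buys is the surjectivity statement itself. The paper reuses it in the Remark right after the proposition and throughout Section~\ref{Section4}, to pass between independence on $S_i$ and on $\mathbb{P}^4$.

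Two small fixes.

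\textbf{The $S_3$ example.} $\{x=y=w-\lambda r=0\}$ is a line, not a plane. After normalization, the plane through the ruling $\{x=y=r=0\}$ is just $\{x=y=0\}$, and then $xr,\ yr,\ xw-yz$ span the quadrics with no adjustment needed. It is simpler to read the base loci off Table~\ref{tab:equations}. In $V(yr-xz,wy,wx)$ and $V(yr-x^2,wy,wx)$, $w\neq 0$ forces $x=y=0$, and $w=0$ leaves the quadric. $V(yr,wy,wx)$ and $V(xy,wy,wx)$ are visibly unions of three planes.

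\textbf{The fallback.} Drop it. Constancy of $h^0(\mathcal{I}(2))$ in a flat family does not make the limit quadrics cut out the limit. For example, the net of quadrics through a twisted cubic can degenerate to $(xz,yz,z^2)$, which cuts out a plane. Your direct check makes the fallback unnecessary anyway.
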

\begin{proof}
We first prove that $H^0(\mathcal{O}_{\bb^4}(2)) \rightarrow H^0(\mathcal{O}_{S_i}(2))$ is surjective for every $i$. Recall that for any $i$, we have $h^0(I_{S_i}(2))=3$, thus $h^0(\mathcal{O}_{S_i}(2)) \geq 12$, with equality if and only if the map is surjective. By upper semicontinuity of dimension of global sections, it suffices to check that $h^0(\mathcal{O}_{S_6}(2))=12$. $S_6$ is a union of three planes which intersect at a line, indeed the space of global quadric sections on it has dimension 12. 

Denote $\Gamma=\{p_1, \dots, p_9\}$. We have $0 \rightarrow \mathcal{I}_{S_i}(2) \rightarrow \mathcal{I}_{\Gamma}(2) \rightarrow \mathcal{I}_{\Gamma/S_i}(2) \rightarrow 0$, which induces long exact sequence in cohomology: $$0 \rightarrow H^0(\mathcal{I}_{S_i}(2)) \rightarrow H^0(\mathcal{I}_{\Gamma}(2)) \rightarrow H^0(\mathcal{I}_{\Gamma/S_i}(2)) \rightarrow H^1(\mathcal{I}_{S_i}(2)) \rightarrow \cdots.$$
Recall that we have $h^0(\mathcal{I}_{S_i}(2))=3$, and by our assumption of independence we have $h^0(\mathcal{I}_{\Gamma}(2))=15-9=6$. From the surjectivity of $H^0(\mathcal{O}_{\bb^4}(2)) \rightarrow H^0(\mathcal{O}_{S_i}(2))$, we obtain $h^1(\mathcal{I}_{S_i}(2))=0$. Therefore, we conclude that $h^0(\mathcal{I}_{\Gamma/S_i}(2))=6+0-3=3$.  

We have $0 \rightarrow \mathcal{I}_{{S_i} \cup p_{10}}(2) \rightarrow \mathcal{I}_{S_i}(2) \rightarrow \mathcal{O}_{p_{10}}(2) \rightarrow 0$, which induces long exact sequence in cohomology: $$0 \rightarrow H^0(\mathcal{I}_{{S_i} \cup p_{10}}(2)) \rightarrow H^0(\mathcal{I}_{S_i}(2)) \rightarrow H^0(\mathcal{O}_{p_{10}}(2)) \rightarrow H^1(\mathcal{I}_{{S_i} \cup p_{10}}(2)) \rightarrow 0.$$ Since $p_{10} \notin S_i$, we must have $h^0(\mathcal{I}_{{S_i} \cup p_{10}}(2))=2$. Therefore, $h^1(\mathcal{I}_{{S_i} \cup p_{10}}(2))=2+1-3=0$.

We also have $0 \rightarrow \mathcal{I}_{{S_i} \cup p_{10}}(2) \rightarrow \mathcal{I}_{{\Gamma} \cup p_{10}}(2) \rightarrow \mathcal{I}_{\Gamma/S_i}(2) \rightarrow 0$, which induces long exact sequence in cohomology: $$0 \rightarrow H^0(\mathcal{I}_{{S_i} \cup p_{10}}(2)) \rightarrow H^0(\mathcal{I}_{{\Gamma} \cup p_{10}}(2)) \rightarrow H^0(\mathcal{I}_{\Gamma/S_i}(2)) \rightarrow 0.$$ From this,  we conclude that $h^0(\mathcal{I}_{{\Gamma} \cup p_{10}}(2))=2+3-0=5$. That is, ${\Gamma} \cup p_{10} = \{p_1, p_2, \cdots p_{10}\} $  impose independent conditions on $\bb^4$. 
\end{proof}

\begin{remark}
    From above proposition, we reduce to the case where $p_1, p_2, \cdots p_{10}$ lie on one of our specializations $S_1, \cdots, S_6$. Furthermore, since $H^0(\mathcal{O}_{\bb^4}(2)) \rightarrow H^0(\mathcal{O}_{S_i}(2))$ is surjective for every $1 \leq i \leq 6$, 10 points impose independent conditions on quadrics on $S_i$ if and only if they impose independent conditions on quadrics in $\bb^4$.
\end{remark}

\section{Dependent locus}\label{Section4}

For each $1 \leq i \leq 6$, we prove the necessary conditions for $p_1, p_2, \cdots ,p_{10} \in S_i$ to fail to impose independent conditions on $\bb^4$. Note that these conditions already give some constraints on the configurations:

\begin{prop}
Under the assumption above, any 8 of the points can't lie on a hyperplane, any 3 of them can't be collinear and any 5 of them can't be coplanar.
\label{assumption}
\end{prop}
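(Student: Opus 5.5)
The plan is to derive each of the three conditions from the standing assumption that any nine of $p_1,\ldots,p_{10}$ impose independent conditions on quadrics. We combine this with the geometry of the canonical genus $5$ curve $C$, which is neither hyperelliptic nor trigonal. For $C$ such a curve is a smooth complete intersection of three quadrics, and every hyperplane section is a divisor of degree $8$.

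\textbf{Hyperplanes.} Suppose eight points, say $p_1,\ldots,p_8$, lie on a hyperplane $H$. Then for every $j\in\{9,10\}$ we have the quadrics $H\cdot H'$, where $H'$ is any hyperplane through $p_j$. These form a $4$-dimensional space of quadrics vanishing at $p_1,\ldots,p_8,p_j$.

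We then use the fact, recorded after Definition of $\mathcal{M}_\omega$ via Proposition 2.1 of \cite{Liu}, that nine points on $C$ fail to impose independent conditions exactly when eight of them lie on a hyperplane. By that criterion, the nine points $p_1,\ldots,p_8,p_9$ are dependent. This contradicts the standing assumption. Equivalently, this locus is precisely $\mathcal{M}_\omega$, which has been excluded from $\mathcal{M}'$.

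\textbf{Lines and planes.} Next I would reduce the collinear and coplanar statements to the hyperplane statement, by counting how many further points a hyperplane can be made to contain.

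If $p_1,p_2,p_3$ lie on a line $\ell$, then $\ell$ together with any two further points spans at most a $\bb^3$. Hence $\ell$ and $p_4,\ldots,p_8$ would not all be in a hyperplane in general, so this naive count is not enough. I would instead argue directly that three collinear points are already dependent on quadrics. Any quadric through two of them must either contain $\ell$ or meet $\ell$ in those two points only. Restricting to $\ell$, the points $p_1,p_2,p_3$ impose only $3$ conditions on the $3$-dimensional space $H^0(\mathcal O_\ell(2))$. So that alone is fine.

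The real input is that $C$ is cut out by quadrics and contains no trisecant line. A line meeting $C$ in three points would lie in every quadric of the ideal of $C$, hence in $C$, which is impossible. This is the key fact. It shows there are no three collinear points on $C$ at all, so that condition is automatic.

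For five coplanar points in a plane $\Pi$, first note that $\Pi$ does not lie in all three quadrics containing $C$. The restriction of the net of quadrics to $\Pi$ is a net of conics through the five points. A $5$-point set in $\Pi$ imposes at most $5$ conditions on the $6$-dimensional space of conics. So the conics restricted from the three quadrics defining $C$ lie in a space of dimension at least $1$.

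I would show that two independent conics from this net meet in only four points. This forces one of the quadrics of $C$ to contain $\Pi$. Then $\Pi\cap C$ is cut out on $\Pi$ by two conics, so it has at most $4$ points unless those conics share a component. A shared component would be a line or conic in $\Pi$ meeting $C$ in many points, which yields a trisecant line or a plane conic in the trigonal-type configuration.

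The main obstacle is this last step. I would handle it by the geometric Riemann–Roch observation that five points of $C$ in a plane span only $\bb^2$. Thus $h^0(K_C-D)=5-3=2$, so $r(D)=5-5+2-1=1$, giving a $g^1_5$ whose residual structure contradicts the fact that $C$ is not trigonal. Alternatively, I would add three more points to reach a hyperplane containing eight points, which would contradict the first part.
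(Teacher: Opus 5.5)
Your hyperplane and collinearity arguments are the paper's. First, eight marked points on a hyperplane already fail to impose independent conditions on quadrics (the content of Proposition 2.1 of \cite{Liu}), so every nine containing them fail too. Second, a quadric through three collinear points contains the line, so $\ell$ lies in all quadrics cutting out $C$, hence in $C$. Your remarks about the quadrics $H\cdot H'$ and about $H^0(\mathcal O_\ell(2))$ play no role.

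For five coplanar points you end on a genuinely different route. The paper stays with quadrics. Since no four of the points are collinear, the five points impose independent conditions on conics in $\Pi$, so there is a unique conic $Q\subset\Pi$ through them. Every quadric containing $C$ restricts to $\Pi$ as a multiple of $Q$ (possibly zero), so $Q\subset C$, contradicting irreducibility. Your first attempt was one step from this: the space of conics through the five points has dimension exactly $1$, not just at least $1$. The detour through pairs of conics and shared components is unnecessary.

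Your final argument by geometric Riemann--Roch is valid and more intrinsic. For $D=p_1+\cdots+p_5$ spanning a plane, $h^0(K_C-D)=2$ and $\deg(K_C-D)=3$. So $K_C-D$ is a $g^1_3$, or a $g^1_2$ plus a base point, contradicting that $C$ is neither trigonal nor hyperelliptic. This does not depend on the collinearity step, and the same computation handles trisecants directly, since three collinear points give $h^0(D)=2$.

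Two corrections, though. Riemann--Roch gives $h^0(D)=5-5+1+2=3$, so $r(D)=2$ and $D$ is a $g^2_5$, not a $g^1_5$. A $g^1_5$ would be no contradiction at all, since every genus-$5$ curve carries one, so the argument must be phrased through the residual pencil $K_C-D$. Also, your alternative of adding three more points to reach a hyperplane containing eight points does not work. The residual three points of a hyperplane section through $\Pi$ need not be marked points, so the first part says nothing about them.
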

\begin{proof}
If 8 of the marked points lie on a hyperplane, they will fail to impose independent conditions on quadrics. In particular, any set of 9 points containing them will also fail to impose independent conditions on quadrics.

If 3 of the 10 points are collinear, i.e., lie on a line $L$, then any quadric vanishing on them will contain the line $L$. This forces our genus 5 curve containing $L$, contradicting to the fact that our curve is irreducible.

If 5 of the 10 points are coplanar, i.e., lie on a plane $P$, then any quadrics vanishing on them will contain a degree 2 curve $Q$ on $P$. This forces our genus 5 curve containing $Q$, contradicting to the fact that our curve is irreducible. 
\end{proof}

\begin{remark}
In the discussion below, we exclude the configurations corresponding to the cases described above.
\end{remark}

\subsection{$S_6$ : union of 3 planes, whose intersection is a line}
\[
\begin{tikzpicture}[line width=0.9pt, scale=0.5]

\coordinate (A) at (-2.2,-5);
\coordinate (B) at (1,-3);
\coordinate (C) at (0,0);   
\coordinate (D) at (-3,-1.5);
\coordinate (I) at (-1.4,-0.7);

\coordinate (E) at (-2.2,1.9);  
\coordinate (F) at (-1,-1.8); 
\draw (C)--(E)--(F)--(B)--cycle;
\draw[dashed] (I) -- (C);
\draw (A) -- (D);
\draw (D) -- (I);
\draw (A) -- (B);

\coordinate (G) at (3.0,0.3);
\coordinate (H) at (3.2,-3);
\draw (C)--(G)--(H)--(B)--cycle;
\node at (2.6,-0.2) {$C$};
\node at (0.7,-0.7) {$L$};
\node at (-1.5,0.8) {$B$};
\node at (-2.5,-1.7) {$A$};
\end{tikzpicture}
\]

We assume that there are $n$ points on the intersection line $L$, $a,b,c$ points on $A,B,C$ respectively. Therefore, we have 
\[
\left\{
\begin{aligned}
&a+b+c=10-n\\
&a+b<8-n,\\
& b+c<8-n,\\
& a+c <8-n,
\end{aligned}
\right.
\]
where $a,b,c,n \in \mathbb{Z}$.
The only cases in which a solution exists are $n=0$ and $n=1$.

For $n=0$, we have 4 points on one of $A,B$ and $C$ and 3 points on other two planes, all points are away from $L$; for $n=1$, we have 3 points on each plane away from $L$, and 1 point on the intersection $L$. In either case, we will have 7 points in $\bb^3$ and other 3 lie on a plane not in $\bb^3$. Therefore, in this configuration, we know that 10 points will impose independent conditions.

\subsection{$S_5$ : union of 3 planes, whose intersection is a point}

\begingroup

\def\CasePicture#1{%
\begin{tikzpicture}[
  line cap=round,
  line join=round,
  font=\small
]

% Same bounding box for both pictures.
\path[use as bounding box]
  (-0.15,-0.55) rectangle (7.35,4.65);

\coordinate (q) at (3.60,3.60);

% Foreground edges L_1 and L_2.
\draw[white,line width=2pt]
  (q) -- (3.00,1.00);
\draw[white,line width=2pt]
  (q) -- (5.40,1.20);

\draw[thick] (q) -- (3.00,1.00);
\draw[thick] (q) -- (5.40,1.20);

% Outside boundary of X.
\draw[thick]
  (3.00,1.00) -- (0.05,0.80)
  -- (0.65,2.95) -- (q);

% Outside boundary of Z.
\draw[thick]
  (q) -- (5.95,4.50)
  -- (7.15,2.15) -- (5.40,1.20);

% Lower boundary of Y.
\draw[thick]
  (3.00,1.00) -- (4.60,-0.40) -- (5.40,1.20);

% Four marked points on X.
\foreach \pt in {
  (1.60,2.45),(2.35,2.88),
  (1.45,1.90),(2.25,1.52)
}{
  \fill \pt circle (0.9pt);
}

% Six marked points on Z.
\foreach \pt in {
  (4.45,3.30),(5.05,3.65),(5.85,3.30),
  (4.95,2.80),(5.65,2.35),(6.30,2.40)
}{
  \fill \pt circle (0.9pt);
}

% ==================================================
% Case-dependent line, points, and labels.
% ==================================================
\ifnum#1=1\relax

  % Case 1: ell does not pass through q.
  \draw[thin]
    (3.18,1.78) -- (4.86,1.92);

  \fill (3.18,1.78) circle (1.1pt);
  \fill (4.02,1.85) circle (0.9pt);
  \fill (4.44,1.885) circle (0.9pt);

  \node at (3.46,3.87) {$q$};
  \node at (3.38,1.56) {$p$};
  \node at (4.53,1.67) {$\ell$};

\else

  % Case 2: ell passes through q=p.
  \coordinate (ellend) at (4.20,-0.05);

  \draw[thin]
    (q) -- (ellend)
    node[pos=0.57,right,inner sep=3pt] {$\ell$};

  \path (q) -- (ellend)
    coordinate[pos=0.42] (ellA)
    coordinate[pos=0.70] (ellB);

  \fill (ellA) circle (0.9pt);
  \fill (ellB) circle (0.9pt);

  \node at (3.46,3.95) {$q=p$};

\fi

% Vertex.
\fill (q) circle (1.4pt);

% Common labels.
\node at (0.90,2.82) {$X$};
\node at (1.43,2.76) {$\Gamma$};
\node at (4.57,-0.06) {$Y$};
\node at (5.74,4.13) {$Z$};
\node at (2.80,1.35) {$L_1$};
\node at (5.43,1.65) {$L_2$};

\end{tikzpicture}%
}

% ==================================================
% Two pictures on the same row.
% ==================================================
\begin{center}

\begin{minipage}[t]{0.48\linewidth}
  \centering
  \resizebox{\linewidth}{!}{\CasePicture{1}}
  \par\smallskip
  {\small Case~1}
\end{minipage}\hfill
\begin{minipage}[t]{0.48\linewidth}
  \centering
  \resizebox{\linewidth}{!}{\CasePicture{2}}
  \par\smallskip
  {\small Case~2}
\end{minipage}

\end{center}

\endgroup

We assume that there are $a$ points on $L_1$, $b$ points on $L_2$, and $c$ points on the vertex $q$; $x$ points on $X$ away from $L_1$, $y$ points on $Y$ away from $L_1, L_2$ and $z$ points on $Z$ away from $L_2$. Under our assumptions, we must have $2 \leq y \leq 4$.

If $y=4$, then we have $c=0, x+a=3$ and $b+z$=3. This is the configuration of  7 points in $\bb^3$ and $3$ points (not collinear) away from the $\bb^3$. Thus in the case 10 points impose independent conditions. 

If $y=3$, similar arguments will show that we have the same configuration as the case $y=4$. Thus in the case 10 points impose independent conditions. 

If $y=2$, there is only one possibility: $(c, x+a, b+z)=(0,4,4)$. We now analyze when it fails to impose independent conditions.

Denote the four points on $X$ away from $L_1$ by $\Gamma$, the line passing through the two points on $Y$ by $\ell$. We add two general points on $Z$, and figure out when these 12 points fail to impose independent conditions. If 10 points fail to impose independent conditions, then these 12 points must fail to impose independent conditions.

\textbf{Case 1. [$q \notin \ell$]:} 

\textit{Claim.} If $\Gamma \cup \{p,q\}$ don't lie on a plane conic or its specializations, then our 10 points impose independent conditions.

\textit{Proof of Claim.} Denote the space of quadrics vanish along these 12 points by $V$. Since the two points we add is general, any quadrics vanishing on the 6 points on $Z$ must vanish along $L_2$. Besides, elements in $V$ must vanish along $\ell$ since they vanish along 3 points on $\ell$. In particular, elements in $V$ also vanish along $p$ and $q$. If $\Gamma \cup \{p,q\}$ don't lie on a conic or its specializations, no quadrics on $S_5$ can vanish at these 12 points. Therefore, these 12 points impose independent conditions. In particular, the 10 points we have in the beginning impose independent condition too. This finishes the proof of our claim.

Here is the proof using exact sequence. We consider the exact sequence $$0 \rightarrow \mathcal{I}_{\Gamma \cup l \cup L_2}(2)|_{X \cup Y} \rightarrow \mathcal{I}_{\Gamma \cup l \cup L_2}(2)|_X \oplus \mathcal{I}_{\Gamma \cup l \cup L_2}(2)|_Y \rightarrow \mathcal{I}_{\{p,q\}}(2)|_{L_1} \rightarrow 0.$$

From this, we know that 
\begin{align*}
    h^0(\mathcal{I}_{\Gamma \cup \ell \cup L_2}(2)|_{X \cup Y}) &= h^0(\mathcal{I}_{\Gamma \cup \ell \cup L_2}(2)|_X) + h^0(\mathcal{I}_{\Gamma \cup \ell \cup L_2}(2)|_Y) - h^0(\mathcal{I}_{\{p,q\}}(2)|_{L_1}) \\
    &= h^0(\mathcal{I}_{\Gamma \cup \ell \cup L_2}(2)|_X) + 1 - 1 \\
    &= h^0(\mathcal{I}_{\Gamma \cup \ell \cup L_2}(2)|_X).
\end{align*}
If these 12 points fail to impose independent conditions, then we must have 
\begin{align*}
    h^0(\mathcal{I}_{\Gamma \cup \ell \cup Z}(2)|_{X \cup Y \cup Z}) &= h^0(\mathcal{I}_{\Gamma \cup \ell \cup L_2}(2)|_{X \cup Y})\neq 0.
\end{align*}

Therefore, we have $h^0(\mathcal{I}_{\Gamma \cup \ell \cup L_2}(2)|_X) \neq 0$, which will happen if and only if $\Gamma \cup \{p, q\}$ live on a plane conic.
By symmetry, 10 points fail to impose independent conditions only if one of the following situations occurs:

\begingroup

% Arguments:
% #1: left conic  — 0 = smooth, 1 = split
% #2: right conic — 0 = smooth, 1 = split
\def\ConicConfiguration#1#2{%
\begin{tikzpicture}[
  line cap=round,
  line join=round,
  font=\small
]

% Common bounding box.
\path[use as bounding box]
  (-0.45,-0.65) rectangle (7.55,4.85);

% Intersection points.
\coordinate (q) at (3.60,3.60);
\coordinate (p) at (3.18,1.78);
\coordinate (r) at (4.86,1.92);

% Intersections of the lines in the split cases.
\coordinate (TX) at (1.85,2.25);
\coordinate (TZ) at (5.80,3.20);

% Parameters for the smooth conic on X.
\pgfmathsetmacro{\conicXcx}{3.39-1.4*cos(70)}
\pgfmathsetmacro{\conicXcy}{2.69-0.65*cos(70)}
\pgfmathsetmacro{\conicXbx}{0.21/sin(70)}
\pgfmathsetmacro{\conicXby}{0.91/sin(70)}

% Parameters for the smooth conic on Z.
\pgfmathsetmacro{\conicZcx}{4.23+1.6*cos(70)}
\pgfmathsetmacro{\conicZcy}{2.76-0.1*cos(70)}
\pgfmathsetmacro{\conicZbx}{-0.63/sin(70)}
\pgfmathsetmacro{\conicZby}{0.84/sin(70)}

% Plane boundaries.
\draw[thick] (q) -- (3.00,1.00);
\draw[thick] (q) -- (5.40,1.20);

\draw[thick]
  (3.00,1.00) -- (0.05,0.80)
  -- (0.65,2.95) -- (q);

\draw[thick]
  (q) -- (5.95,4.50)
  -- (7.15,2.15) -- (5.40,1.20);

\draw[thick]
  (3.00,1.00) -- (4.60,-0.40) -- (5.40,1.20);

% The line ell.
\draw[thin,blue] (p) -- (r);

% ==================================================
% Left conic on X.
% ==================================================
\ifnum#1=0\relax

  % Smooth conic.
  \begin{scope}[
    cm={1.4,0.65,\conicXbx,\conicXby,
        (\conicXcx,\conicXcy)}
  ]
    % Outside the drawn plane: dashed.
    \draw[thin,blue,dash pattern=on 2pt off 2pt]
      (-70:1)
      arc[start angle=-70,end angle=70,radius=1];

    % Inside the plane: solid.
    \draw[thin,blue]
      (70:1)
      arc[start angle=70,end angle=290,radius=1];
  \end{scope}

  % Four marked points.
  \foreach \ang in {115,165,215,265}{
    \fill[red]
      ({\conicXcx+1.4*cos(\ang)+\conicXbx*sin(\ang)},
       {\conicXcy+0.65*cos(\ang)+\conicXby*sin(\ang)})
      circle (0.9pt);
  }

\else

  % Two lines through q and p, meeting at TX.
  \coordinate (Xa) at ($(q)!-0.18!(TX)$);
  \coordinate (Xb) at ($(q)!2.15!(TX)$);
  \coordinate (Xc) at ($(p)!-0.20!(TX)$);
  \coordinate (Xd) at ($(p)!2.10!(TX)$);

  % Dashed extensions.
  \draw[thin,blue,dash pattern=on 2pt off 2pt]
    (Xa) -- (Xb)
    (Xc) -- (Xd);

  % Solid portions inside X.
  \begin{scope}
    \clip
      (3.00,1.00) -- (0.05,0.80)
      -- (0.65,2.95) -- (q) -- cycle;
    \draw[thin,blue]
      (Xa) -- (Xb)
      (Xc) -- (Xd);
  \end{scope}

  % Two marked points on each line.
  \foreach \t in {0.40,1.40}{
    \fill[red] ($(q)!\t!(TX)$) circle (0.9pt);
    \fill[red] ($(p)!\t!(TX)$) circle (0.9pt);
  }

\fi

% ==================================================
% Right conic on Z.
% ==================================================
\ifnum#2=0\relax

  % Smooth conic.
  \begin{scope}[
    cm={-1.6,0.1,\conicZbx,\conicZby,
        (\conicZcx,\conicZcy)}
  ]
    % Outside the drawn plane: dashed.
    \draw[thin,blue,dash pattern=on 2pt off 2pt]
      (-70:1)
      arc[start angle=-70,end angle=70,radius=1];

    % Inside the plane: solid.
    \draw[thin,blue]
      (70:1)
      arc[start angle=70,end angle=290,radius=1];
  \end{scope}

  % Four marked points.
  \foreach \ang in {115,165,215,265}{
    \fill[red]
      ({\conicZcx-1.6*cos(\ang)+\conicZbx*sin(\ang)},
       {\conicZcy+0.1*cos(\ang)+\conicZby*sin(\ang)})
      circle (0.9pt);
  }

\else

  % Two lines through q and r, meeting at TZ.
  \coordinate (Za) at ($(q)!-0.15!(TZ)$);
  \coordinate (Zb) at ($(q)!1.55!(TZ)$);
  \coordinate (Zc) at ($(r)!-0.18!(TZ)$);
  \coordinate (Zd) at ($(r)!1.65!(TZ)$);

  % Dashed extensions.
  \draw[thin,blue,dash pattern=on 2pt off 2pt]
    (Za) -- (Zb)
    (Zc) -- (Zd);

  % Solid portions inside Z.
  \begin{scope}
    \clip
      (q) -- (5.95,4.50)
      -- (7.15,2.15) -- (5.40,1.20) -- cycle;
    \draw[thin,blue]
      (Za) -- (Zb)
      (Zc) -- (Zd);
  \end{scope}

  % Two marked points on each line.
  \foreach \t in {0.40,1.20}{
    \fill[red] ($(q)!\t!(TZ)$) circle (0.9pt);
    \fill[red] ($(r)!\t!(TZ)$) circle (0.9pt);
  }

\fi

% Intersection points.
\fill (q) circle (1.4pt);
\fill (p) circle (1.1pt);
\fill (r) circle (1.1pt);

% Two marked points on ell.
\fill[red] (4.02,1.85) circle (0.9pt);
\fill[red] (4.44,1.885) circle (0.9pt);

% Labels.
\node at (0.90,2.82) {$X$};
\node at (1.43,2.76) {$\Gamma$};
\node at (4.57,-0.06) {$Y$};
\node at (5.74,4.13) {$Z$};
\node at (3.46,3.87) {$q$};
\node at (3.38,1.56) {$p$};
\node at (2.87,1.35) {$L_1$};
\node at (5.40,1.65) {$L_2$};
\node at (4.25,1.60) {$\ell$};

\end{tikzpicture}%
}

% ==================================================
% Display the four diagrams, two per row.
% ==================================================
\begin{center}

% First row.
\begin{minipage}[t]{0.48\linewidth}
  \centering
  \resizebox{\linewidth}{!}{\ConicConfiguration{0}{0}}
  \par\smallskip
  {\small Configuration (5) in Table \ref{tab:configurations} }
\end{minipage}\hfill
\begin{minipage}[t]{0.48\linewidth}
  \centering
  \resizebox{\linewidth}{!}{\ConicConfiguration{1}{0}}
  \par\smallskip
  {\small Configuration (7) in Table \ref{tab:configurations}}
\end{minipage}

\par\vspace{10pt}

% Second row.
\begin{minipage}[t]{0.48\linewidth}
  \centering
  \resizebox{\linewidth}{!}{\ConicConfiguration{0}{1}}
  \par\smallskip
  {\small Configuration (7) in Table \ref{tab:configurations}}
\end{minipage}\hfill
\begin{minipage}[t]{0.48\linewidth}
  \centering
  \resizebox{\linewidth}{!}{\ConicConfiguration{1}{1}}
  \par\smallskip
  {\small Configuration (8) in Table \ref{tab:configurations}}
\end{minipage}

\end{center}
\endgroup

\textbf{Case 2. [$q \in \ell$]:}
Similarly, if $\Gamma \cup {q}$ don't lie on a conic or its specialization which vanishes of order 2 at $q$ along $L_1$, then our 10 original points impose independent conditions. Indeed, quadrics vanish along the 6 points on $Z$ must vanish along $\ell$ and $L_2$. Therefore, any quadrics vanish at these 12 points should vanish of order 2 when restricting to $L_1$. Below is the proof using exact sequences:
If these 12 points fail to impose independent conditions, then we must have $h^0(\mathcal{I}_{\Gamma \cup \ell \cup Z}(2)|_{X \cup Y \cup Z})=h^0(\mathcal{I}_{\Gamma \cup \ell \cup L_2}(2)|_{X \cup Y}) \neq 0$. In this case, our exact sequence is $$0 \rightarrow \mathcal{I}_{\Gamma \cup \ell \cup L_2}(2)|_{X \cup Y} \rightarrow \mathcal{I}_{\Gamma \cup \ell \cup L_2}(2)|_X \oplus \mathcal{I}_{\Gamma \cup \ell \cup L_2}(2)|_Y \rightarrow \mathcal{I}_{2p}(2)|_{L_1} \rightarrow 0.$$ 

From this, we know that 
\begin{align*}
    h^0(\mathcal{I}_{\Gamma \cup \ell \cup L_2}(2)|_{X \cup Y}) &= h^0(\mathcal{I}_{\Gamma \cup \ell \cup L_2}(2)|_X) + h^0(\mathcal{I}_{\Gamma \cup \ell \cup L_2}(2)|_Y) - h^0(\mathcal{I}_{2p}(2)|_{L_1}) \\
    &= h^0(\mathcal{I}_{\Gamma \cup \ell \cup L_2}(2)|_X) + 1 - 1 \\
    &= h^0(\mathcal{I}_{\Gamma \cup \ell \cup L_2}(2)|_X).
\end{align*}
Moreover, we have $h^0(\mathcal{I}_{\Gamma \cup \ell \cup L_2}(2)|_X) \neq 0$ if and only if there exists a plane conic, passing through $\Gamma \cup p$, tangent to $L_1$. By symmetry, 10 points fail to impose independent conditions only if one of the following situations occurs:
\begingroup

% First argument: conic on X.
% Second argument: conic on Z.
% 0 = smooth conic; 1 = two lines through p.
\def\TangentConicCase#1#2{%
\begin{tikzpicture}[
  line cap=round,
  line join=round,
  font=\small
]
% Same bounding box for all four pictures.
\path[use as bounding box]
  (-0.45,-0.65) rectangle (7.55,4.85);

\coordinate (p) at (3.60,3.60);
\coordinate (ellend) at (4.20,-0.05);

% --------------------------------------------------
% The three planes
% --------------------------------------------------

% L_1 and L_2.
\draw[thick] (p)--(3.00,1.00);
\draw[thick] (p)--(5.40,1.20);

% Outside boundary of X.
\draw[thick]
  (3.00,1.00)--(0.05,0.80)
  --(0.65,2.95)--(p);

% Outside boundary of Z.
\draw[thick]
  (p)--(5.95,4.50)
  --(7.15,2.15)--(5.40,1.20);

% Lower boundary of Y.
\draw[thick]
  (3.00,1.00)--(4.60,-0.40)--(5.40,1.20);

% ell passes through p, with two marked points.
\draw[thin,blue] (p)--(ellend)
  node[pos=0.57,right,inner sep=3pt] {$\ell$};

\foreach \t in {0.42,0.70}{
  \fill[red] ($(p)!\t!(ellend)$) circle (0.9pt);
}

% --------------------------------------------------
% Conic on X
% --------------------------------------------------

\ifnum#1=0\relax

  % Smooth ellipse through p, tangent to L_1 at p.
  % Draw the full ellipse dashed first.
  \begin{scope}[
    cm={1,0.65,-0.24,-1.04,(2.60,2.95)}
  ]
    \draw[thin,blue,dashed] (0,0) circle[radius=1];
  \end{scope}

  % Redraw the portion inside X as solid.
  \begin{scope}
    \clip
      (3.00,1.00)--(0.05,0.80)
      --(0.65,2.95)--(p)--cycle;

    \begin{scope}[
      cm={1,0.65,-0.24,-1.04,(2.60,2.95)}
    ]
      \draw[thin,blue] (0,0) circle[radius=1];
    \end{scope}
  \end{scope}

  % Four marked points on the smooth conic.
  \foreach \ang in {45,90,135,180}{
    \fill[red]
      ({2.60+cos(\ang)-0.24*sin(\ang)},
       {2.95+0.65*cos(\ang)-1.04*sin(\ang)})
      circle (0.9pt);
  }

\else

  % Two distinct lines through p.
  \coordinate (Xone) at (1.00,2.50);
  \coordinate (Xtwo) at (1.20,1.20);

  \coordinate (Xa) at ($(p)!-0.12!(Xone)$);
  \coordinate (Xb) at ($(p)!1.25!(Xone)$);
  \coordinate (Xc) at ($(p)!-0.12!(Xtwo)$);
  \coordinate (Xd) at ($(p)!1.18!(Xtwo)$);

  % Dashed extensions outside X.
  \draw[thin,blue,dashed]
    (Xa)--(Xb)
    (Xc)--(Xd);

  % Solid portions inside X.
  \begin{scope}
    \clip
      (3.00,1.00)--(0.05,0.80)
      --(0.65,2.95)--(p)--cycle;

    \draw[thin,blue]
      (Xa)--(Xb)
      (Xc)--(Xd);
  \end{scope}

  % Two marked points on each line.
  \foreach \t in {0.35,0.72}{
    \fill[red] ($(p)!\t!(Xone)$) circle (0.9pt);
    \fill[red] ($(p)!\t!(Xtwo)$) circle (0.9pt);
  }

\fi

% --------------------------------------------------
% Conic on Z
% --------------------------------------------------

\ifnum#2=0\relax

  % Smooth ellipse through p, tangent to L_2 at p.
  % Draw the full ellipse dashed first.
  \begin{scope}[
    cm={-1.25,0.1,0.6,-0.8,(4.85,3.50)}
  ]
    \draw[thin,blue,dashed] (0,0) circle[radius=1];
  \end{scope}

  % Redraw the portion inside Z as solid.
  \begin{scope}
    \clip
      (p)--(5.95,4.50)
      --(7.15,2.15)--(5.40,1.20)--cycle;

    \begin{scope}[
      cm={-1.25,0.1,0.6,-0.8,(4.85,3.50)}
    ]
      \draw[thin,blue] (0,0) circle[radius=1];
    \end{scope}
  \end{scope}

  % Four marked points on the smooth conic.
  \foreach \ang in {45,90,135,180}{
    \fill[red]
      ({4.85-1.25*cos(\ang)+0.6*sin(\ang)},
       {3.50+0.1*cos(\ang)-0.8*sin(\ang)})
      circle (0.9pt);
  }

\else

  % Two distinct lines through p.
  \coordinate (Zone) at (5.90,3.90);
  \coordinate (Ztwo) at (6.00,2.35);

  \coordinate (Za) at ($(p)!-0.12!(Zone)$);
  \coordinate (Zb) at ($(p)!1.35!(Zone)$);
  \coordinate (Zc) at ($(p)!-0.12!(Ztwo)$);
  \coordinate (Zd) at ($(p)!1.40!(Ztwo)$);

  % Dashed extensions outside Z.
  \draw[thin,blue,dashed]
    (Za)--(Zb)
    (Zc)--(Zd);

  % Solid portions inside Z.
  \begin{scope}
    \clip
      (p)--(5.95,4.50)
      --(7.15,2.15)--(5.40,1.20)--cycle;

    \draw[thin,blue]
      (Za)--(Zb)
      (Zc)--(Zd);
  \end{scope}

  % Two marked points on each line.
  \foreach \t in {0.35,0.72}{
    \fill[red] ($(p)!\t!(Zone)$) circle (0.9pt);
    \fill[red] ($(p)!\t!(Ztwo)$) circle (0.9pt);
  }

\fi

% --------------------------------------------------
% Vertex and labels
% --------------------------------------------------

\fill (p) circle (1.4pt);

\node at (0.90,2.82) {$X$};
\node at (1.30,2.35) {$\Gamma$};
\node at (4.57,-0.06) {$Y$};
\node at (5.74,4.13) {$Z$};

\node at (3.40,3.80) {$p$};
\node at (2.87,1.35) {$L_1$};
\node at (5.40,1.65) {$L_2$};

\end{tikzpicture}%
}

% ==================================================
% Arrange the four pictures: two in each row
% ==================================================

\begin{center}

% First row.
\begin{minipage}[t]{0.48\linewidth}
  \centering
  \resizebox{\linewidth}{!}{\TangentConicCase{0}{0}}
  \par\smallskip
  {\small Configuration (13) in Table \ref{tab:configurations}}
\end{minipage}\hfill
\begin{minipage}[t]{0.48\linewidth}
  \centering
  \resizebox{\linewidth}{!}{\TangentConicCase{0}{1}}
  \par\smallskip
  {\small Configuration (14) in Table \ref{tab:configurations}}
\end{minipage}

\par\vspace{12pt}

% Second row.
\begin{minipage}[t]{0.48\linewidth}
  \centering
  \resizebox{\linewidth}{!}{\TangentConicCase{1}{0}}
  \par\smallskip
  {\small Configuration (14) in Table \ref{tab:configurations}}
\end{minipage}\hfill
\begin{minipage}[t]{0.48\linewidth}
  \centering
  \resizebox{\linewidth}{!}{\TangentConicCase{1}{1}}
  \par\smallskip
  {\small Configuration (15) in Table \ref{tab:configurations}}
\end{minipage}

\end{center}
\endgroup

\subsection{$S_4$ : union of a plane and a singular quadric surface}
Using similar analysis, the possible cases are: 

$\left(\mathrm{I}\right)$ Four points on $P$ away from $L$ and six points on the singular quadric $Q$.\\
$\left(\mathrm{II}\right)$ Three points on $P$ away from $L$ and seven points on the singular quadric $Q$. \\
$\left(\mathrm{III}\right)$ Three points on $P$ away from $L$, the cone point $p$, and six points on the singular quadric $Q$.

For $\left(\mathrm{II}\right)$ and $\left(\mathrm{III}\right)$, we will have 7 points in $\bb^3$ and other 3 lie on a plane not in $\bb^3$, in which case 10 points impose independent conditions.

Now let's focus on the case $\left(\mathrm{I}\right)$. By blowing up $Q$ via $p$, we have the following diagram: 
\[\begin{tikzcd}
    \mathbb{F}_2 \arrow[r, hookrightarrow, "|e+2f|"] \arrow[d,"b"] &\Bl_p\bb^4 \arrow[d] \\
    Q \arrow[r] & \bb^4
\end{tikzcd}\]

Add 2 general points to $P$, and we determine the conditions under which these 12 points fail to impose independent conditions. Take any curve $E$ of class $e+3f$ in $\mathbb{F}_2$, and we now compute its genus and degree. By adjunction, we have $$2p_a(E)-2=E(E+K_{\mathbb{F}_2}).$$ Therefore, $p_a(E)=\frac{1}{2}((e+3f)(e+3f-2e-4f)+2)=0$. Since the hyperplane class is given by $e+2f$, we have $\deg E= (e+3f)(e+2f)=3$. Thus $E$ is a degree 3 curve with genus 0.

\begin{prop}
    If preimages of the 6 points $p_1, \cdots, p_6$ on $Q$ under blowup do not all lie on any member of $|e+3f|$, then 12 points impose independent conditions on quadrics on $S_4$. In particular, the original 10 points also impose independent conditions.
\end{prop}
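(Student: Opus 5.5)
The plan is to argue exactly as in the $S_5$ case: glue sections along the line $L=P\cap Q$ and reduce everything to a statement on $Q$, which we then pull back to $\mathbb{F}_2$. Since $H^0(\mathcal{O}_{\bb^4}(2))\to H^0(\mathcal{O}_{S_4}(2))$ is surjective (Section \ref{scrolls}), the points impose independent conditions on quadrics in $\bb^4$ if and only if they do so on $S_4$. Moreover, adding points can only increase the failure of independence, so it suffices to treat the 12 points. Call the 6 points on $Q$ by $p_1,\dots,p_6$. On $P$ we then have four given points together with two general points, all off $L$. The goal is to show that $h^0$ of quadrics on $S_4$ vanishing at all 12 points equals $12-12=0$.

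\textbf{Step 1: the plane component.} Six points on $P\simeq\bb^2$ impose independent conditions on conics when the two added points are general and no three of the four given points are collinear (Proposition \ref{assumption}). So only the zero conic on $P$ passes through all six, and any quadric section of $S_4$ vanishing at the 12 points restricts to $0$ on $P$. In particular it vanishes identically on $L$. Via the Mayer--Vietoris type sequence
\[
0\to \mathcal{O}_{S_4}(2)\to \mathcal{O}_P(2)\oplus\mathcal{O}_Q(2)\to \mathcal{O}_L(2)\to 0,
\]
the space in question is identified with $H^0(\mathcal{I}_{\{p_1,\dots,p_6\}\cup L}(2)|_Q)$.

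\textbf{Step 2: pull back to $\mathbb{F}_2$.} The line $L$ is a ruling of the cone $Q$ through the vertex $p$. Under $b$, the class $\mathcal{O}_Q(2)$ pulls back to $2e+4f$, and the strict transform of $L$ is a fibre $f$. Sections of $\mathcal{O}_Q(2)$ vanishing on $L$ therefore correspond to sections of $\mathcal{O}_{\mathbb{F}_2}(2e+4f-f)=\mathcal{O}_{\mathbb{F}_2}(2e+3f)$. We must check that nothing extra is lost at the vertex; since $Q$ is normal with a rational singularity, $b_*\mathcal{O}_{\mathbb{F}_2}=\mathcal{O}_Q$, which gives this. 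Now $(2e+3f)\cdot e=-4+3<0$, so $e$ is a fixed component of $|2e+3f|$, and hence $H^0(2e+3f)=H^0(e+3f)$, with every member of the form $e+D$ for $D\in|e+3f|$. As $p_1,\dots,p_6$ are not the vertex, their preimages avoid $e$. So a nonzero section vanishing at the six points exists if and only if the preimages lie on a member of $|e+3f|$. The hypothesis excludes this, so the space is $0$ and the 12 points are independent. The original 10 points are then independent as well.

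\textbf{Main obstacle.} The delicate step is the correct identification in Step 2. This means justifying that quadric sections of $Q$ vanishing on the ruling $L$ are exactly $H^0(\mathcal{O}_{\mathbb{F}_2}(2e+3f))$, including the multiplicity along $e$ coming from the vertex. It also means checking that $h^0(e+3f)=7$, using $h^0(\mathcal{O}_{\mathbb{F}_2}(e+3f))=4+3$. I would verify this directly by writing quadrics on the cone $z^2=yw$ in coordinates and comparing with the torus-monomial description of sections on $\mathbb{F}_2$. As a consistency check, $7-6=1$ is the expected count, so generically six points do not lie on such a curve.
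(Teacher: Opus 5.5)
Your argument is correct and is essentially the paper's. The six points on $P$ kill the restriction to $P$, so the remaining sections are quadric sections of $Q$ vanishing on $L$. Pulled back to $\mathbb{F}_2$, these are identified with $H^0(\mathcal{O}_{\mathbb{F}_2}(e+3f))$, so a nonzero one through $p_1,\dots,p_6$ exhibits a member of $|e+3f|$ containing their preimages. The paper does the first reduction in coordinates rather than by Mayer--Vietoris. It justifies the identification by asserting that $L$ ``pulls back to $e+f$'' and matching dimensions $6=6$. Your route is the cleaner justification: the strict transform of $L$ is a fibre, and $e$ is a fixed component of $|2e+3f|$ since $(2e+3f)\cdot e=-1$, which gives the isomorphism directly.

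Two corrections.

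First, $h^0(\mathcal{O}_{\mathbb{F}_2}(e+3f))=h^0(\mathcal{O}_{\mathbb{P}^1}(3))+h^0(\mathcal{O}_{\mathbb{P}^1}(1))=6$, not $7$, because $\pi_*\mathcal{O}_{\mathbb{F}_2}(e)\cong\mathcal{O}\oplus\mathcal{O}(-2)$. Your own isomorphism forces this: quadrics in the span of $Q$ containing $L$, modulo the equation of $Q$, form a $7-1=6$ dimensional space. Your ``consistency check'' is also reversed. With $7$ sections, every six points would lie on a member of $|e+3f|$, so the hypothesis could never hold. It is $6-6=0$ that makes the hypothesis a nontrivial condition satisfied by general points. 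The implication itself never uses this number, so the proof survives, but the claim as written is false.

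Second, the step ``a section vanishing at the six points gives a member of $|e+3f|$ through them'' needs the preimages to avoid the strict transform $\tilde L$ as well as $e$. This is because the pulled-back section factors as $s_{\tilde L}\, s_e\, g$ with $g\in H^0(\mathcal{O}_{\mathbb{F}_2}(e+3f))$. The condition does hold: if one of $p_1,\dots,p_6$ lay on $L\subset P$, there would be five coplanar marked points, which Proposition~\ref{assumption} excludes.
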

\begin{proof}
These 12 points impose independent condition if and only if there are no quadrics on $S_4$ vanishing at these 12 points. Note that all quadrics on $S_4$ vanishing at the 6 points on $P$ must vanish along $P$. Without loss of generality, we assume the $S_4=V(yr-x^2,wy,wx)$. Moreover, the space of quadrics on $S_4$ vanishing on $P$ is generated by $x^2, xy, xz, xr, y^2, yz, xw, yw$. We want to see how many conditions the 6 points on $Q$ impose on this space. Note that all points on $Q$ satisfy $w=0$, so they impose no conditions on $xw$ and $yw$. We turn to consider the space $V$, which is generated by $x^2, xy, xz, xr, y^2, yz$. These sections vanish along $L$, which pulls back to the class $e+f$ on $\mathbb{F}_2$. These sections pull back to $H^0(\mathcal{O}_{\mathbb{F}_2}(2e+4f))$, thus they also lie in $H^0(\mathcal{O}_{\mathbb{F}_2}(e+3f))$. Since they both have dimension 6, we know $V=H^0(\mathcal{O}_{\mathbb{F}_2}(e+3f))$. Now we consider the evaluation map
$$H^0(\mathcal{O}_{\mathbb{F}_2}(e+3f)) \rightarrow H^0(\mathcal{O}_{\{p_1, \dots, p_6\}}(e+3f)).$$
If the 12 points fail to impose independent condition on quadrics on $S_4$, then this map will have a non trivial kernel. Choose a nonzero element $s$ of this kernel. The vanishing locus of $s$ is exactly $E$. Therefore, $p_1, \dots, p_6$ must lie on the curve $E$. \end{proof}

From now on, we are going to assume the 6 points $p_1, \cdots, p_6$ on $Q$ lie on our curve $E$. Note that $E$ might be reducible, so there are several possibilities. From our assumption for the configuration, we can't have nonreduced structure on E. Therefore, we only need to list all possible candidates for $E$ which are reduced:

$\left(\mathrm{I}\right)$ $E$ is irreducible with class $e+3f$. \\
$\left(\mathrm{II}\right)$ $E$ is the union of two irreducible curves $e+2f$ and $f$. \\
$\left(\mathrm{III}\right)$ $E$ is the union of four irreducible curves $e,f,f$ and $f$.

\begin{remark}
    Note that we have $(e+f)e=-1<0$, which implies that $e+f$ can't be irreducible. This is why $e+f$ doesn't appear as a component of $E$ in the above candidates.
\end{remark}
Denote $\Gamma=\{p_1, \cdots, p_{10}\}$, $\Gamma_2=\{p_1, \cdots, p_6\}$ and $\Gamma_1=\Gamma \setminus \Gamma_2$.

$\left(\mathrm{I}\right)$ The image of $E$ under $b$ is a rational curve $X$ of degree 3. 

Note that $(e+3f)e=1$ and $(e+3f)(e+f)=2$, we know that $X$ intersects $L$ at 2 points, denoted by $p$ and $q$. $p$ and $q$ might be the same, so we divide into 2 cases.

\textit{Case 1}: $p \neq q$. We have the following exact sequence: $$0 \rightarrow \mathcal{I}_{X \cup \Gamma_1}(2) \rightarrow \mathcal{I}_{X \cup \Gamma_1}(2)|_P \oplus \mathcal{I}_{X \cup \Gamma_1}(2)|_Q \rightarrow \mathcal{I}_{X \cup \Gamma_1}(2)|_L \rightarrow 0, $$
where $\mathcal{I}_{X \cup \Gamma_1}(2)|_P=\mathcal{I}_{\Gamma_1 \cup \{p,q\}}(2)|_P$, $\mathcal{I}_{X \cup \Gamma_1}(2)|_Q= \mathcal{I}_{X}(2)|_Q$ and $\mathcal{I}_{X \cup \Gamma_1}(2)|_L=\mathcal{I}_{\{p,q\}}(2)|_L$. 
Assume that $\Gamma_1 \cup \{p,q\}$ don't lie on a plane conic on $P$, then we have $h^0(\mathcal{I}_{X \cup \Gamma_1}(2)|_P)=0$. Besides, we have $h^i(\mathcal{I}_{X}(2)|_Q)=h^i(\mathcal{I}_{E}(2))=h^i(\mathcal{O}_{\mathbb{F}_2}(e+f))$. Thus $h^0(\mathcal{I}_{X}(2)|_Q)=2$ and $h^1(\mathcal{I}_{X}(2)|_Q)=0$. From this, we know what $$h^0(\mathcal{I}_{X \cup \Gamma_1}(2))=h^0(\mathcal{I}_{X \cup \Gamma_1}(2)|_P)+h^0(\mathcal{I}_{X \cup \Gamma_1}(2)|_Q)-h^0(\mathcal{I}_{X \cup \Gamma_1}(2)|_L)=0+2-1=1$$ and $h^1(\mathcal{I}_{X \cup \Gamma_1}(2))=0$. Furthermore, from the exact sequence $$0 \rightarrow \mathcal{I}_{X \cup \Gamma_1}(2) \rightarrow \mathcal{I}_{\Gamma}(2) \rightarrow \mathcal{I}_{\Gamma_2/X}(2) \rightarrow 0,$$ we have $h^0(\mathcal{I}_\Gamma(2))=h^0(\mathcal{I}_{\Gamma_2/X}(2))+h^0(\mathcal{I}_{X \cup \Gamma_1}(2))=1+1=2$. This is to say, the 10 points $p_1, \cdots, p_{10}$ impose independent conditions on quadrics on $S_4$. Therefore, 10 points fail to impose independent conditions only if $\Gamma_1 \cup \{p,q\}$ lie on a plane conic on $P$, as drawn below:

\begingroup

% 0 = smooth conic on P.
% 1 = split conic on P.
\def\SingularQuadricCase#1{%
\begin{tikzpicture}[
  line cap=round,
  line join=round,
  every node/.style={scale=0.85},
  bluecurve/.style={blue,semithick},
  hiddenblue/.style={
    blue,semithick,
    dash pattern=on 2pt off 2pt
  }
]

\path[use as bounding box]
  (-1.25,-0.35) rectangle (2.45,3.35);

% Plane vertices and cone vertex.
\coordinate (A) at (0.558,2.874);
\coordinate (B) at (1.442,0.126);
\coordinate (V) at (-0.70,2.00);
\coordinate (p) at (1.00,1.50);

% --------------------------------------------------
% Schematic cubic C through the cone vertex
% --------------------------------------------------

% The parameter t=0 gives p.
% Calculate the second intersection q with L.
\pgfmathsetmacro{\rulingSlope}{
  (2.874-1.50)/(1.00-0.558)
}

% Recalculate the second intersection of C with this ruling.
\pgfmathsetmacro{\qt}{
  (0.75-sqrt(0.75*0.75+0.12+1.40/\rulingSlope))/0.50
}

\pgfmathsetmacro{\qx}{
  1+0.12*\qt+0.75*\qt*\qt-0.25*\qt*\qt*\qt
}
\pgfmathsetmacro{\qy}{
  1.50+1.40*\qt
}

\coordinate (q) at (\qx,\qy);

% --------------------------------------------------
% Singular quadric Q and plane P
% --------------------------------------------------

% Both sides pass through the cone vertex p.
\draw (0,3)--(2,0);
\draw (0,0)--(2,3);

% Back arcs.
\draw
  (0,3)
  .. controls (0.5,3.2) and (1.5,3.2) .. (2,3);

\draw
  (0,0)
  .. controls (0.5,-0.2) and (1.5,-0.2) .. (2,0);

% Plane edges.
% The common ruling L is A--B.
\draw[white,line width=1.4pt]
  (A)--(V)--(B);

\draw[thin]
  (A)--(V)--(B)--cycle;

% Front arcs.
\draw
  (0,3)
  .. controls (0.5,2.8) and (1.5,2.8) .. (2,3);

\draw
  (0,0)
  .. controls (0.5,0.2) and (1.5,0.2) .. (2,0);

% --------------------------------------------------
% Blue cubic C on Q
% --------------------------------------------------

\draw[bluecurve]
  plot[
    domain=-0.96:0.95,
    samples=100,
    variable=\t
  ]
  ({1+0.12*\t+0.75*\t*\t-0.25*\t*\t*\t},
   {1.50+1.40*\t});

% Six red points on C, separate from p and q.
\foreach \t in {-0.86,-0.48,-0.25,0.25,0.55,0.85}{
  \fill[red]
    ({1+0.12*\t+0.75*\t*\t-0.25*\t*\t*\t},
     {1.50+1.40*\t})
    circle (1pt);
}

% --------------------------------------------------
% Blue plane conic through p and q
% --------------------------------------------------

\ifnum#1=0\relax

  % Smooth ellipse through p and q.
  \pgfmathsetmacro{\planeCx}{
    (1+\qx)/2-0.95*cos(65)
  }
  \pgfmathsetmacro{\planeCy}{
    (1.50+\qy)/2+0.65*cos(65)
  }
  \pgfmathsetmacro{\planeBx}{
    (1-\qx)/(2*sin(65))
  }
  \pgfmathsetmacro{\planeBy}{
    (1.50-\qy)/(2*sin(65))
  }

  % Full ellipse, initially dashed.
  \begin{scope}[
    cm={0.95,-0.65,\planeBx,\planeBy,
        (\planeCx,\planeCy)}
  ]
    \draw[hiddenblue]
      (0,0) circle[radius=1];
  \end{scope}

  % Solid portion inside the drawn plane.
  \begin{scope}
    \clip (A)--(V)--(B)--cycle;

    \begin{scope}[
      cm={0.95,-0.65,\planeBx,\planeBy,
          (\planeCx,\planeCy)}
    ]
      \draw[bluecurve]
        (0,0) circle[radius=1];
    \end{scope}
  \end{scope}

  % Four red points on the smooth plane conic.
  \foreach \ang in {110,155,205,245}{
    \fill[red]
      ({\planeCx+0.95*cos(\ang)+\planeBx*sin(\ang)},
       {\planeCy-0.65*cos(\ang)+\planeBy*sin(\ang)})
      circle (1pt);
  }

\else

  % Split conic: one line through p, another through q.
  \coordinate (T) at (-0.10,1.95);

  \coordinate (Pa) at ($(p)!-0.12!(T)$);
  \coordinate (Pb) at ($(p)!1.40!(T)$);
  \coordinate (Qa) at ($(q)!-0.12!(T)$);
  \coordinate (Qb) at ($(q)!1.28!(T)$);

  % Dashed extensions.
  \draw[hiddenblue]
    (Pa)--(Pb)
    (Qa)--(Qb);

  % Solid portions inside the drawn plane.
  \begin{scope}
    \clip (A)--(V)--(B)--cycle;

    \draw[bluecurve]
      (Pa)--(Pb)
      (Qa)--(Qb);
  \end{scope}

  % Two red points on each line.
  \foreach \t in {0.46,0.76}{
    \fill[red] ($(p)!\t!(T)$) circle (1pt);
    \fill[red] ($(q)!\t!(T)$) circle (1pt);
  }

\fi

% --------------------------------------------------
% Black intersection points and labels
% --------------------------------------------------

\fill[black] (p) circle (1.3pt);
\fill[black] (q) circle (1.3pt);

\node[right,inner sep=3pt,scale=0.7] at (p) {$p$};
\node[right,inner sep=3pt,scale=0.7] at (q) {$q$};

\node[scale=0.5] at (-0.85,1.98) {$P$};
\node[scale=0.5] at (1.95,2.65) {$Q$};
\node[scale=0.5] at (0.75,2.64) {$L$};

\node[blue,scale=0.5] at (1.25,2.48) {$X$};

\node[scale=0.5,red] at (0.45,2.04) {$\Gamma_1$};
\node[scale=0.5,red] at (0.98,2.00) {$\Gamma_2$};

\end{tikzpicture}%
}

% --------------------------------------------------
% Two pictures in one row
% --------------------------------------------------

\begin{center}

\begin{minipage}[t]{0.48\linewidth}
  \centering
  \resizebox{\linewidth}{!}{\SingularQuadricCase{0}}
  \par\smallskip
  \noindent
  \makebox[\linewidth][l]{%
    \hspace*{0.608\linewidth}%
    \makebox[0pt][c]{%
      \small Configuration (4) in Table \ref{tab:configurations}%
    }%
  }\par
\end{minipage}\hfill
\begin{minipage}[t]{0.48\linewidth}
  \centering
  \resizebox{\linewidth}{!}{\SingularQuadricCase{1}}
  \par\smallskip
  \noindent
  \makebox[\linewidth][l]{%
    \hspace*{0.608\linewidth}%
    \makebox[0pt][c]{%
      \small Configuration (6) in Table \ref{tab:configurations}%
    }%
  }\par
\end{minipage}

\end{center}
\endgroup

\textit{Case 2}: $p = q$. We have the following exact sequence $$0 \rightarrow \mathcal{I}_{X \cup \Gamma_1}(2) \rightarrow \mathcal{I}_{X \cup \Gamma_1}(2)|_P \oplus \mathcal{I}_{X \cup \Gamma_1}(2)|_Q \rightarrow \mathcal{I}_{X \cup \Gamma_1}(2)|_L \rightarrow 0, $$
where $\mathcal{I}_{X \cup \Gamma_1}(2)|_P=\mathcal{I}_{\Gamma_1 \cup \{2p\}}(2)|_P$, $\mathcal{I}_{X \cup \Gamma_1}(2)|_Q= \mathcal{I}_{X}(2)|_Q$ and $\mathcal{I}_{X \cup \Gamma_1}(2)|_L=\mathcal{I}_{\{2p\}}(2)|_L$. Similarly, 10 points fail to impose independent conditions only if the conic passing through $\Gamma_1 \cup p$ is tangent to $L$ at $p$, as drawn below:

\begingroup

% 0 = smooth conic on P.
% 1 = split conic on P.
\def\SingularCoincidentCase#1{%
\begin{tikzpicture}[
  line cap=round,
  line join=round,
  every node/.style={scale=0.85},
  bluecurve/.style={blue,semithick},
  hiddenblue/.style={
    blue,semithick,
    dash pattern=on 2pt off 2pt
  }
]

\path[use as bounding box]
  (-1.25,-0.35) rectangle (2.45,3.35);

% Plane vertices and cone vertex.
\coordinate (p) at (1.00,1.50);

% A lies on the upper rim.
\coordinate (A) at (0.5070987074,2.8801236193);

% B lies on the lower rim; A, p, B are collinear.
\coordinate (B) at ($(p)!-1!(A)$);

\coordinate (V) at (-0.70,2.00);

% --------------------------------------------------
% Singular quadric Q
% --------------------------------------------------

\draw (0,3)--(p)--(2,0);
\draw (0,0)--(p)--(2,3); 
% Back arcs.
\draw
  (0,3)
  .. controls (0.5,3.2) and (1.5,3.2) .. (2,3);

\draw
  (0,0)
  .. controls (0.5,-0.2) and (1.5,-0.2) .. (2,0);

% Plane P and the common ruling L=A--B.
\draw[white,line width=1.4pt]
  (A)--(V)--(B);

% Outside edges of the plane.
\draw[thin] (A)--(V)--(B);

% Common intersection line, explicitly through p=q.
\draw[thin] (A)--(p)--(B);

% Front arcs.
\draw
  (0,3)
  .. controls (0.5,2.8) and (1.5,2.8) .. (2,3);

\draw
  (0,0)
  .. controls (0.5,0.2) and (1.5,0.2) .. (2,0);

% --------------------------------------------------
% Schematic irreducible cubic X
% --------------------------------------------------

% At t=0, X passes through p with tangent parallel to L.
\draw[bluecurve]
  plot[
    domain=-1.15:1.15,
    samples=121,
    variable=\t
  ]
  ({1-0.50*\t+0.20*\t*\t+0.08*\t*\t*\t},
   {1.50+1.40*\t-0.224*\t*\t*\t});

% Six red points on X, distinct from p.
\foreach \t in {-1.10,-0.85,-0.60,0.60,0.85,1.10}{
  \fill[red]
    ({1-0.50*\t+0.20*\t*\t+0.08*\t*\t*\t},
     {1.50+1.40*\t-0.224*\t*\t*\t})
    circle (1pt);
}

% --------------------------------------------------
% Plane conic meeting L in 2p
% --------------------------------------------------

\ifnum#1=0\relax

  % Smooth conic tangent to L at p.
  \begin{scope}[
    cm={0.60,-0.23,-0.18,0.504,(0.40,1.73)}
  ]
    \draw[bluecurve]
      (0,0) circle[radius=1];
  \end{scope}

  % Four red points on the smooth plane conic.
  \foreach \ang in {70,130,190,250}{
    \fill[red]
      ({0.40+0.60*cos(\ang)-0.18*sin(\ang)},
       {1.73-0.23*cos(\ang)+0.504*sin(\ang)})
      circle (1pt);
  }

\else

  % Two distinct lines, both passing through p.
  \coordinate (Tone) at (-0.45,2.10);
  \coordinate (Ttwo) at (0.10,1.40);

  \coordinate (Ua) at ($(p)!-0.12!(Tone)$);
  \coordinate (Ub) at ($(p)!1.12!(Tone)$);
  \coordinate (Wa) at ($(p)!-0.12!(Ttwo)$);
  \coordinate (Wb) at ($(p)!1.25!(Ttwo)$);

  % Dashed extensions.
  \draw[hiddenblue]
    (Ua)--(Ub)
    (Wa)--(Wb);

  % Solid portions inside the drawn plane.
  \begin{scope}
    \clip (A)--(V)--(B)--cycle;

    \draw[bluecurve]
      (Ua)--(Ub)
      (Wa)--(Wb);
  \end{scope}

  % Two red points on each line.
  \foreach \t in {0.35,0.72}{
    \fill[red] ($(p)!\t!(Tone)$) circle (1pt);
    \fill[red] ($(p)!\t!(Ttwo)$) circle (1pt);
  }

\fi

% --------------------------------------------------
% Black vertex and labels
% --------------------------------------------------

\fill[black] (p) circle (1.3pt);

\node[right,inner sep=3pt,scale=0.6] at (p) {$p$};

\node[scale=0.5] at (-0.85,1.98) {$P$};
\node[scale=0.5] at (2.00,2.65) {$Q$};
\node[scale=0.5] at (0.48,2.64) {$L$};

\node[blue,scale=0.5] at (0.98,2.25) {$X$};

\node[scale=0.5,red] at (-0.05,2.10) {$\Gamma_1$};
\node[scale=0.5,red] at (1.03,2.60) {$\Gamma_2$};

\end{tikzpicture}%
}

% --------------------------------------------------
% Two pictures in one row
% --------------------------------------------------

\begin{center}

\begin{minipage}[t]{0.48\linewidth}
  \centering
  \resizebox{\linewidth}{!}{\SingularCoincidentCase{0}}
  \par\smallskip
  \noindent
  \makebox[\linewidth][l]{%
    \hspace*{0.608\linewidth}%
    \makebox[0pt][c]{%
      \small Configuration (11) in Table \ref{tab:configurations}
    }%
  }\par
\end{minipage}\hfill
\begin{minipage}[t]{0.48\linewidth}
  \centering
  \resizebox{\linewidth}{!}{\SingularCoincidentCase{1}}
  \par\smallskip
  \noindent
  \makebox[\linewidth][l]{%
    \hspace*{0.608\linewidth}%
    \makebox[0pt][c]{%
      \small Configuration (12) in Table \ref{tab:configurations}
    }%
  }\par
\end{minipage}

\end{center}
\endgroup

$\left(\mathrm{II}\right)$ Denote the image of $E$ under $b$ by $X$. $X$ is the union of a conic $A$ and a line $B$ in $Q$. $(e+2f)e=0$ shows that the conic doesn't meet the cone point $p$. 
%Denote the intersection of $A$ and $L$ by $q$, we have $p \neq q$. We consider the exact sequence $$0 \rightarrow \mathcal{O}_{X}(2) \rightarrow \mathcal{O}_A(2) \oplus \mathcal{O}_B(2) \cong \mathcal{O}_{\bb^1}(4) \oplus \mathcal{O}_{\bb^1}(2)  \rightarrow \mathcal{O}_Q(2)|_{A \cap B} \rightarrow 0.$$ From this, we get $h^0(\mathcal{O}_X(2))=5+3-1=7$. Surjectivity of the restriction map shows that $h^1(\mathcal{O}_X(2))=0$.
Similarly, there is the exact sequence $$0 \rightarrow \mathcal{I}_{X \cup \Gamma_1}(2) \rightarrow \mathcal{I}_{X \cup \Gamma_1}(2)|_P \oplus \mathcal{I}_{X \cup \Gamma_1}(2)|_Q \rightarrow \mathcal{I}_{X \cup \Gamma_1}(2)|_L \rightarrow 0, $$
where $\mathcal{I}_{X \cup \Gamma_1}(2)|_P=\mathcal{I}_{\Gamma_1 \cup \{p,q\}}(2)|_P$, $\mathcal{I}_{X \cup \Gamma_1}(2)|_Q= \mathcal{I}_{X}(2)|_Q$ and $\mathcal{I}_{X \cup \Gamma_1}(2)|_L=\mathcal{I}_{\{p,q\}}(2)|_L$. Moreover, we have $$0 \rightarrow \mathcal{I}_{X \cup \Gamma_1}(2) \rightarrow \mathcal{I}_{\Gamma}(2) \rightarrow \mathcal{I}_{\Gamma_2}(2)|_X \rightarrow 0.$$ Same as what we do in $\left(1\right)$, the 10 points fail to impose independent conditions only if $\Gamma_1 \cup \{p,q\}$ lie on a plane conic on $P$. This gives configurations (5) and (7) in Table \ref{tab:configurations}.

$\left(\mathrm{III}\right)$ The image of $E$ under $b$ in $Q$ is the union of three distinct lines $L_1, L_2$ and $L_3$. In this case, we have exact sequences $$0 \rightarrow \mathcal{I}_{X \cup \Gamma_1}(2) \rightarrow \mathcal{I}_{X \cup \Gamma_1}(2)|_P \oplus \mathcal{I}_{X \cup \Gamma_1}(2)|_Q \rightarrow \mathcal{I}_{X \cup \Gamma_1}(2)|_L \rightarrow 0,$$ where $\mathcal{I}_{X \cup \Gamma_1}(2)|_P=\mathcal{I}_{\Gamma_1 \cup \{2p\}}(2)|_P$, $\mathcal{I}_{X \cup \Gamma_1}(2)|_Q= \mathcal{I}_{X}(2)|_Q$ and $\mathcal{I}_{X \cup \Gamma_1}(2)|_L=\mathcal{I}_{\{2p\}}(2)|_L$. By using the same method as \textit{Case 2} in $\left(1\right)$, we conclude that 10 points fail to impose independent conditions only if there exists a conic passing through $\Gamma_1 \cup p$, which is tangent to $L$ at $p$. This gives configurations (14) and (15) in Table \ref{tab:configurations}.

\subsection{$S_3$ : union of a plane and a smooth quadric surface}
Similar argument shows that the only possibility for the 10 points to fail to impose independent condition is that 4 points lie on the plane and 6 points lie on the smooth quadric, all away from the attached ruling. Thus we have the following picture:
\[
\begin{tikzpicture}
\begin{scope}[xshift=4cm, yshift=0cm, xscale=1.2, yscale=1]

% Original quadric.
\draw
  (0,0)
  .. controls (0.5,0.3) and (0.5,2.7) .. (0,3);

\draw
  (0,0)
  .. controls (0.5,-0.3) and (1.5,-0.3) .. (2,0);

\draw
  (0,0)
  .. controls (0.5,0.3) and (0.5,2.7) .. (0,3);

\draw
  (2,0)
  .. controls (1.5,0.3) and (1.5,2.7) .. (2,3);

\draw
  (0,3)
  .. controls (0.5,3.3) and (1.5,3.3) .. (2,3);

% Original white underlays.
\draw[draw=white, line width=1pt]
  (0.5,2.88) -- (-0.7,2.01);

\draw[draw=white, line width=1pt]
  (-0.7,2.01) -- (1.5,0.11);

% Front arcs of the quadric.
\draw
  (0,3)
  .. controls (0.5,2.8) and (1.5,2.8) .. (2,3);

\draw
  (0,0)
  .. controls (0.5,0.2) and (1.5,0.2) .. (2,0);

% Original plane.
\draw[thin]
  (0.5,2.88) -- (1.5,0.11);

\draw[thin]
  (-0.7,2) -- (0.5,2.88);

\draw[thin]
  (-0.7,2.01) -- (1.5,0.11);

% Four points on P: Gamma_1.
\foreach \pt in {
  (-0.33,2.05),
  (0.00,2.33),
  (0.22,1.87),
  (0.60,1.43)
}{
  \fill \pt circle (1pt);
}

% Six points on Q: Gamma_2.
\foreach \pt in {
  (0.85,2.40),
  (1.45,2.40),
  (1.06,1.85),
  (1.50,1.85),
  (1.20,1.30),
  (1.55,1.30)
}{
  \fill \pt circle (1pt);
}

% Labels of the plane and quadric.
\node at (-0.10, 1.20) {$P$};
\node at (2.07,2.68) {$Q$};

% Labels of the two sets of points.
\node[font=\small] at (-0.10,1.80) {$\Gamma_1$};
\node[font=\small] at (1.23,2.64) {$\Gamma_2$};

% Equations.
\node[above] at (1.00,3.36)
  {$w=yr-xz=0$};

\node[above left] at (-0.48,2.14)
  {$x=y=0$};

% Configuration label.
%\node at (1,-1) {$S_3$};

\end{scope}
\end{tikzpicture}
\]
Therefore, $S_3$ is given by equations $wx=wy=yr-xz=0$. Again we add 2 general points on the plane $P$, and look for the necessary condition for these 12 points failing to impose independent conditions. Any quadric vanishing along the union of $\Gamma_1$ and the 2 points we added must vanish along the plane $P:\{x=y=0\}$. The quadric space vanishing along the plane $P$ is generated by $x^2,xy,xz,xr,y^2,yz,wx,wy$. Now we look at how many conditions do $\Gamma_2$ impose on this space. Note that every point of on $\Gamma_2$ satisfies $w=0$, so they impose no conditions on $xw$ and $yw$. Now we consider the space $V$ generated by $x^2,xy,xz,xr,y^2,yz$.

\begin{prop}
If $\Gamma_2$ do not lie on any $(1,2)$ class on $Q \simeq \mathbb{P}_{[s:t]}^1 \times \mathbb{P}_{[u:v]}^1$, then these 12 points impose independent conditions. In particular, our original 10 points impose independent conditions.
\end{prop}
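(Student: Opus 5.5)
The plan is to mirror the proof of the analogous proposition for $S_4$. The 12 points consist of $\Gamma_2$, the four points of $\Gamma_1$, and the two general points added on $P$. They impose independent conditions on quadrics on $S_3$ exactly when no nonzero quadric section on $S_3$ vanishes at all 12 of them. As noted above, any such quadric must vanish along $P$, so it lies in the span of $x^2,xy,xz,xr,y^2,yz,wx,wy$. Since $w$ vanishes on $Q$, the sections $wx$ and $wy$ vanish at every point of $\Gamma_2$. So the question reduces to how many conditions $\Gamma_2$ imposes on the 6-dimensional space $V$.

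The key step is to identify $V$, restricted to $Q$, with $H^0(\mathcal{O}_Q(1,2))$.

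\begin{itemize}
\item Parametrize $Q=\{w=0,\ yr=xz\}\subset\{w=0\}\simeq\bb^3$ by $\bb^1_{[s:t]}\times\bb^1_{[u:v]}$, taking $x=su$, $y=sv$, $z=tu$, $r=tv$. Then the ruling $L=P\cap Q=\{x=y=0\}$ is the fiber $\{s=0\}$, of class $(1,0)$.
\item Every element of $V$ lies in the ideal $(x,y)$, so restricted to $Q$ it lies in $s\cdot H^0(\mathcal{O}_Q(1,2))$. Concretely,
\[
x^2,xy,y^2,xz,xr,yz \;\mapsto\; s\cdot\{su^2,\,suv,\,sv^2,\,tu^2,\,tuv,\,tuv\}.
\]
\item Here $xr$ and $yz$ both restrict to $stuv$. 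Their difference $xr-yz$ is one of the equations of $S_3$, so it vanishes on $S_3$ anyway. I will use the generator $xr-yz$ in place of one of them (equivalently, pass to $V$ modulo $I_{S_3}$).
\item With that replacement, the restriction map from the relevant space of sections on $S_3$ to $s\cdot H^0(\mathcal{O}_Q(1,2))$ is injective, and the span is exactly $s\cdot H^0(\mathcal{O}_Q(1,2))$, which has dimension $2\cdot 3=6$.
\end{itemize}

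This identification is the main obstacle: I need to correct the bookkeeping of $V$ by the ideal of $S_3$ so that the dimension count genuinely matches $h^0(\mathcal{O}_Q(1,2))=6$. I will handle it by working modulo $(wx,wy,yr-xz)$ throughout. With the identification in hand, the proof finishes as follows.

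\begin{itemize}
\item Suppose the 12 points fail to impose independent conditions. Then some nonzero section of $S_3$ vanishes at all of them. By the above, this gives a nonzero $\sigma\in H^0(\mathcal{O}_Q(1,2))$ with $s\sigma$ vanishing on $\Gamma_2$.
\item The points of $\Gamma_2$ lie off $L=\{s=0\}$, so $\sigma$ itself vanishes on $\Gamma_2$.
\item The zero locus of $\sigma$ is a curve of class $(1,2)$ on $Q$ containing $\Gamma_2$.
\end{itemize}

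This contradicts the hypothesis. So the 12 points impose independent conditions, and therefore so does any subset of them, in particular the original 10 points. For the last step I will invoke the remark that independence on $S_3$ is equivalent to independence in $\bb^4$, which holds because $H^0(\mathcal{O}_{\bb^4}(2))\to H^0(\mathcal{O}_{S_3}(2))$ is surjective.
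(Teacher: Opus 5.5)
Your overall plan is the paper's: reduce to $V=\langle x^2,xy,xz,xr,y^2,yz\rangle$, identify its restriction to $Q$ with $s\cdot H^0(\mathcal{O}_Q(1,2))$, and use $s\neq 0$ on $\Gamma_2$. However, the step you single out as the main obstacle is wrong as written.

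\textbf{The error.} Your parametrization $x=su$, $y=sv$, $z=tu$, $r=tv$ does not land in $Q=\{w=0,\ yr=xz\}$. It gives $yr=stv^2$ and $xz=stu^2$, so you have parametrized the different quadric $\{w=0,\ xr=yz\}$. That is the only reason $xr$ and $yz$ collide in your list.

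Your repair then rests on a false claim. The polynomial $xr-yz$ is not an equation of $S_3$, whose ideal is generated by $yr-xz$, $wx$, $wy$. For instance, $[x:y:z:w:r]=[1:0:0:0:1]$ lies on $Q\subset S_3$, but $xr-yz=1$ there.

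The repair also fails on its own terms. If $xr-yz$ did vanish on $S_3$, your generators would give only the five monomials $s\cdot\{su^2,suv,sv^2,tu^2,tuv\}$ modulo $I_{S_3}$. So the image could not be the $6$-dimensional space $s\cdot H^0(\mathcal{O}_Q(1,2))$; the monomial $s\cdot tv^2$ is missing.

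\textbf{The fix.} Swap $z$ and $r$, as the paper does. With $x=su$, $y=sv$, $r=tu$, $z=tv$ one has $xz=yr=stuv$. The six generators then restrict to $s\cdot\{su^2,suv,sv^2,tuv,tu^2,tv^2\}$, which is a basis of $s\cdot H^0(\mathcal{O}_Q(1,2))$. No correction by the ideal is needed: no nonzero element of $H^0(\mathcal{I}_{S_3}(2))=\langle yr-xz,wx,wy\rangle$ lies in $V$, since $V$ contains none of the monomials $yr$, $wx$, $wy$.

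Alternatively, you can drop coordinates altogether. For the implication the proposition asserts, take a nonzero section of $\mathcal{O}_{S_3}(2)$ vanishing on $P$. Because $S_3=P\cup Q$ is reduced, it restricts to a nonzero section of $\mathcal{O}_Q(2)\cong\mathcal{O}_Q(2,2)$. That section vanishes on the ruling $L=P\cap Q$, so it equals $s\sigma$ with $0\neq\sigma\in H^0(\mathcal{O}_Q(1,2))$.

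The fact that the image is all of $s\cdot H^0(\mathcal{O}_Q(1,2))$, which is your dimension match, is only needed for the converse, and the proposition does not claim it. With either repair, the rest of your argument goes through unchanged.
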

Take $x=su, y=sv,r=tu, z=tv$. In terms of the coordinates of $\mathbb{P}^1$, $V=s\cdot H^0(\mathcal{O}_{\mathbb{P}^1 \times \mathbb{P}^1}(1,2))$. Since $\Gamma_2$ lie away from $x=y=0$, we have $s \neq 0$. Consider the evaluation map $$ev:V=s\cdot H^0(\mathcal{O}_{\mathbb{P}^1 \times \mathbb{P}^1}(1,2)) \rightarrow H^0(\mathcal{O}_{\Gamma_2}(2)).$$
Since both spaces have dimension 6, $ev$ fails to be surjective if and only if $ev$ has a kernel. Since $\Gamma_2$ lie away from $x=y=0$, we have $s \neq 0$. That is, $\Gamma_2$ lie on a bidegree $(1,2)$ curve $C'$ if and only if $\Gamma_2$ fail to impose independent conditions on $V$.  \qed

Now we assume that $\Gamma_2$ all lie on a bidegree $(1,2)$ curve $C' \subset Q$.

Note that $C'$ might be reducible, and we have the following possibilities:

$\left(\mathrm{I}\right)$ $C'$ is irreducible.

$\left(\mathrm{II}\right)$ $C'$ is the union of a conic $C$ and a line $L$, namely a union of a bidegree $(1,1)$ curve and a bidegree $(0,1)$ curve.

$\left(\mathrm{III}\right)$ $C'$ is the union of three rulings, namely a union of two distinct bidegree $(0,1)$ curves $L_1, L_2$ and a bidegree $(1,0)$ curve $L_3$.

\begin{remark}
    $C'$ is the union of a bidegree $(1,0)$ curve and a bidegree $(0,2)$ curve is impossible. Indeed, we can't have non-reduce structure in this case because we assume that any 4 of them can't be collinear.
\end{remark}

Denote the 4 points on $P$ by $\Gamma_1$, and the 6 points on $Q$ by $\Gamma_2$. Denote their union by $\Gamma$. Let $l:=\{x=y=w=0\}$.

We will give the computation for $\left(\mathrm{II}\right)$. And other cases will be similar.

\begin{prop}
    Let $q$ be the intersection of $L$ and $\ell$, $p$ be the intersection of $C$ and $\ell$ and $r$ be the intersection of $C$ and $L$. If $p \neq q$, then $\Gamma$ fail to impose independent conditions only if $\Gamma_1$ and $\{p,q\}$ lie on a plane conic on $P$. If $p=q$, then $\Gamma$ fail to impose independent conditions only if $\Gamma_1 \cup p$ lie on a plane conic on $P$, which is tangent to $\ell$ at p.
\end{prop}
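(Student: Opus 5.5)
Let $X = C\cup L$ denote the degree-three curve on $Q$; note that $C\cup L$ has class $(1,2)$, so $X$ meets the attached ruling $\ell=P\cap Q$ in exactly the two points $p$ and $q$, counted with multiplicity. I will follow the same Mayer--Vietoris strategy used for $S_4$ in case $\left(\mathrm{I}\right)$, applied to $S_3=P\cup Q$ with $P\cap Q=\ell$. The starting point is the exact sequence
\[
0 \rightarrow \mathcal{I}_{X \cup \Gamma_1}(2) \rightarrow \mathcal{I}_{X \cup \Gamma_1}(2)|_P \oplus \mathcal{I}_{X \cup \Gamma_1}(2)|_Q \rightarrow \mathcal{I}_{X \cup \Gamma_1}(2)|_\ell \rightarrow 0
\]
of sheaves on $S_3$. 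Here $\mathcal{I}_{X \cup \Gamma_1}(2)|_P=\mathcal{I}_{\Gamma_1\cup\{p,q\}}(2)|_P$ when $p\neq q$, and equals $\mathcal{I}_{\Gamma_1\cup 2p}(2)|_P$ when $p=q$, where $2p$ is the length-two subscheme of $\ell$ at $p$. Similarly $\mathcal{I}_{X \cup \Gamma_1}(2)|_\ell$ is $\mathcal{I}_{\{p,q\}}(2)|_\ell$ or $\mathcal{I}_{2p}(2)|_\ell$, which in either case has $h^0=1$ and $h^1=0$.

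\textbf{The $Q$-term.} On $Q\simeq\bb^1\times\bb^1$ we have $\mathcal{I}_X(2)|_Q\simeq\mathcal{O}_Q((2,2)-(1,2))=\mathcal{O}_Q(1,0)$. This gives $h^0=2$ and $h^1=0$, matching the earlier computation $h^0=2$ on $\mathbb{F}_2$. The restriction $H^0(\mathcal{O}_Q(1,0))\to H^0(\mathcal{I}_{X}(2)|_\ell)$ is surjective: $\ell$ is a ruling of the other family (of class $(0,1)$), so $\mathcal{O}_Q(1,0)$ restricts to $\mathcal{O}_{\bb^1}(1)$ on it, and the sections vanishing on $\{p,q\}$ or on $2p$ form the one-dimensional image.

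\textbf{The count.} Suppose first that $\Gamma_1\cup\{p,q\}$ does not lie on a plane conic (respectively, $\Gamma_1\cup p$ does not lie on a conic tangent to $\ell$ at $p$). Then the $P$-term has $h^0=0$. The long exact sequence gives $h^0(\mathcal{I}_{X\cup\Gamma_1}(2))=0+2-1=1$ and $h^1(\mathcal{I}_{X\cup\Gamma_1}(2))=0$; the vanishing of $h^1$ uses both the surjectivity just noted and $h^1$ of the $P$-term, which needs a check.

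Next apply the sequence
\[
0\to\mathcal{I}_{X\cup\Gamma_1}(2)\to\mathcal{I}_\Gamma(2)\to\mathcal{I}_{\Gamma_2/X}(2)\to 0.
\]
Here $\mathcal{O}_X(2)$ has degree $6$ and $h^0=7$: the conic gives $5$ sections and the line gives $3$, glued at the single point $r$. The six points $\Gamma_2$ impose independent conditions on $\mathcal{O}_X(2)$, which I justify using Proposition \ref{assumption} (no three of the points are collinear). Hence $h^0(\mathcal{I}_{\Gamma_2/X}(2))=1$, and so $h^0(\mathcal{I}_\Gamma(2))\le 2$. Combined with the remark that independence on $S_3$ is equivalent to independence in $\bb^4$, this shows that the 10 points impose independent conditions.

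\textbf{Main obstacle.} The key step is verifying $h^1=0$ for the $P$-term when no such conic exists. I expect this to follow because $\Gamma_1\cup\{p,q\}$ is six points, and the hypothesis says they impose independent conditions on plane conics. One must still exclude configurations in which four of them are collinear, which I will do using Proposition \ref{assumption} together with the fact that $\Gamma_1$ avoids $\ell$.

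In the tangent case $p=q$, the same argument applies with the scheme $\Gamma_1\cup 2p$ in place of $\Gamma_1\cup\{p,q\}$. Failure of independence of these six conditions on conics is exactly the existence of a conic through $\Gamma_1\cup p$ tangent to $\ell$ at $p$.
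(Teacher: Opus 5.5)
Your argument is the same as the paper's. Both use the Mayer--Vietoris sequence on $S_3=P\cup Q$ for $\mathcal{I}_{X\cup\Gamma_1}(2)$, followed by $0\to\mathcal{I}_{X\cup\Gamma_1}(2)\to\mathcal{I}_\Gamma(2)\to\mathcal{I}_{\Gamma_2/X}(2)\to 0$ with $h^0(\mathcal{I}_{\Gamma_2/X}(2))=1$.

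\textbf{The justification of the $Q$-term surjectivity is wrong.} The count depends on this step. In the paper's coordinates $x=su$, $y=sv$, $r=tu$, $z=tv$, the line $\ell=P\cap Q$ is $\{s=0\}$, of bidegree $(1,0)$. This is the ruling opposite to $L$, which has bidegree $(0,1)$. If $\ell$ were of class $(0,1)$, it would miss $L$, so $q$ would not exist, and $X\cdot\ell$ would be $1$, not $2$. Consequently $\mathcal{O}_Q(1,0)|_\ell\cong\mathcal{O}_\ell$, not $\mathcal{O}_{\mathbb{P}^1}(1)$. This is forced, since $\deg\bigl(\mathcal{O}_Q(2)(-X)|_\ell\bigr)=2-X\cdot\ell=0$.

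Read literally, your version breaks the count. A section of $\mathcal{O}_{\mathbb{P}^1}(1)$ vanishing at $p$ and $q$ is zero, so the image would be zero. That would give $h^0(\mathcal{I}_{X\cup\Gamma_1}(2))=2$ and only $h^0(\mathcal{I}_\Gamma(2))\le 3$.

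The correct reason is that the kernel of $H^0(\mathcal{O}_Q(1,0))\to H^0(\mathcal{O}_\ell)$ is $H^0(\mathcal{O}_Q(1,0)(-\ell))=H^0(\mathcal{O}_Q)$, which is one-dimensional. So the map is onto the one-dimensional target. Concretely, the section $t$ does not vanish on $\ell$.

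\textbf{Two smaller points.}
\begin{enumerate}
\item Your ``main obstacle'' is not an obstacle. For any length-six subscheme $Z\subset P\cong\mathbb{P}^2$ we have $\chi(\mathcal{I}_Z(2))=0$ and $h^2(\mathcal{I}_Z(2))=0$, so $h^1=h^0$. The no-conic hypothesis therefore gives both vanishings, with no collinearity check. Moreover, your inequality $h^0(\mathcal{I}_\Gamma(2))\le h^0(\mathcal{I}_{X\cup\Gamma_1}(2))+h^0(\mathcal{I}_{\Gamma_2/X}(2))$, together with the a priori bound $h^0(\mathcal{I}_\Gamma(2))\ge 12-10$, never uses $h^1(\mathcal{I}_{X\cup\Gamma_1}(2))=0$. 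The paper's equality does use it, so on this point your route is slightly more economical.
\item Independence of $\Gamma_2$ on $H^0(\mathcal{O}_X(2))$ needs the no-five-coplanar clause of Proposition~\ref{assumption}, not only no-three-collinear. Together they force four points on $C$, two on $L$, and none at $r$. With only no-three-collinear, all six points could lie on $C$, and then $h^0(\mathcal{I}_{\Gamma_2/X}(2))=2$.
\end{enumerate}
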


\textit{Proof}. We first assume $p \neq q$, and $\Gamma_1 \cup \{p,q\}$ don't lie on a plane conic on $P$. We prove $\Gamma$ impose independent conditions on quadrics on $S_3$.
Consider the following sequence $$0 \rightarrow \mathcal{I}_{C' \cup \Gamma_1}(2) \rightarrow \mathcal{I}_{C' \cup \Gamma_1}(2)|_P \oplus \mathcal{I}_{C' \cup \Gamma_1}(2)|_Q \rightarrow \mathcal{I}_{\{p,q\}}(2)|_l,$$
where we have $\mathcal{I}_{C' \cup \Gamma_1}(2)|_P=\mathcal{I}_{\Gamma_1 \cup \{p,q\}}(2)|_P$ and $\mathcal{I}_{C' \cup \Gamma_1}(2)|_Q=\mathcal{I}_C'(2)|_Q$.

The assumption that $\Gamma_1 \cup \{p,q\}$ don't lie on a plane conic on $P$ implies $H^0(\mathcal{I}_{\Gamma_1 \cup \{p,q\}}(2)|_P)=H^1(\mathcal{I}_{\Gamma_1 \cup \{p,q\}}(2)|_P)=0$. Furthermore, we have $$\mathcal{I}_C'(2)|_Q = \mathcal{O}_{\mathbb{P}^1 \times \mathbb{P}^1}(-1,-2) \otimes \mathcal{O}_{\mathbb{P}^1 \times \mathbb{P}^1}(2,2)= \mathcal{O}_{\mathbb{P}^1 \times \mathbb{P}^1}(1,0).$$ Therefore, $h^0(\mathcal{I}_C'(2)|_Q)=2$ and $h^1(\mathcal{I}_C'(2)|_Q)=0$. Since $\mathcal{I}_C'(2)|_Q \rightarrow \mathcal{I}_{\{p,q\}}(2)|_l$ is surjective, we have $h^0(\mathcal{I}_{C' \cup \Gamma_1}(2))=1$ and $h^1(\mathcal{I}_{C' \cup \Gamma_1}(2))=0$.  Besides, we have $$0 \rightarrow \mathcal{O}_C'(2)(-\Gamma_2) \rightarrow \mathcal{O}_C'(2)(-\Gamma_2)|_C \oplus \mathcal{O}_C'(2)(-\Gamma_2)|_L \rightarrow \mathcal{O}_C'(2)(-\Gamma_2)|_r \rightarrow 0,$$ where $\mathcal{O}_C'(2)(-\Gamma_2)|_L = \mathcal{O}_{\mathbb{P}^1}(2-2)=\mathcal{O}_{\mathbb{P}^1}$, $\mathcal{O}_C'(2)(-\Gamma_2)|_C=\mathcal{O}_{\mathbb{P}^1}$ and $\mathcal{O}_C'(2)(-\Gamma_2)|_r=\mathcal{O}_r$. Thus we get \begin{align*}
h^0(\mathcal{O}_C'(2)(-\Gamma_2)) &= h^0(\mathcal{I}_{\Gamma_2}(2)|_C') \\
&= h^0(\mathcal{O}_{\mathbb{P}^1}) + h^0(\mathcal{O}_{\mathbb{P}^1}) - h^0(\mathcal{O}_{r}) \\
&= 1.
\end{align*}
%$h^0(\mathcal{O}_X(2)(-\Gamma_2))=h^0(\mathcal{I}_{\Gamma_2}(2)|_X)=1+1-1=1.$ 

Finally, consider the sequence $$0 \rightarrow \mathcal{I}_{C' \cup \Gamma_1}(2) \rightarrow \mathcal{I}_{\Gamma}(2) \rightarrow \mathcal{I}_{\Gamma_2}(2)|_C' \rightarrow 0.$$ We know that $h^0(\mathcal{I}_{\Gamma}(2))=h^0(\mathcal{I}_{C' \cup \Gamma_1}(2))+h^0(\mathcal{I}_{\Gamma_2}(2)|_C')=2$. In other words, $\Gamma$ impose independent conditions on quadrics on $S_3$.\\

Now we assume that $p=q$, and $\Gamma_1 \cup {p} $ don't lie on a plane conic on $P$ which is tangent to $\ell$ at $p$. Similarly, consider the following sequence $$0 \rightarrow \mathcal{I}_{C' \cup \Gamma_1}(2) \rightarrow \mathcal{I}_{C' \cup \Gamma_1}(2)|_P \oplus \mathcal{I}_{C' \cup \Gamma_1}(2)|_Q \rightarrow \mathcal{I}_{\{2p\}}(2)|_\ell,$$
where we have $\mathcal{I}_{C' \cup \Gamma_1}(2)|_P=\mathcal{I}_{\Gamma_1 \cup \{2p\}}(2)|_P$ and $\mathcal{I}_{C' \cup \Gamma_1}(2)|_Q=\mathcal{I}_C'(2)|_Q$. The assumption that $\Gamma_1 \cup {p} $ don't lie on a plane conic on $P$ which is tangent to $\ell$ at $p$ implies $h^0(\mathcal{I}_{\Gamma_1 \cup \{2p\}}(2)|_P)=h^1(\mathcal{I}_{\Gamma_1 \cup \{2p\}}(2)|_P)=0$. Therefore, we have $h^0(\mathcal{I}_{C' \cup \Gamma_1}(2))=1$ and $h^1(\mathcal{I}_{C' \cup \Gamma_1}(2))=0$. Same computation as above shows that $h^0(\mathcal{O}_C'(2)(-\Gamma_2))=h^0(\mathcal{I}_{\Gamma_2}(2)|_C')=1$. From the short exact sequence $$0 \rightarrow \mathcal{I}_{C' \cup \Gamma_1}(2) \rightarrow \mathcal{I}_{\Gamma}(2) \rightarrow \mathcal{I}_{\Gamma_2}(2)|_C' \rightarrow 0,$$ we know that $h^0(\mathcal{I}_{\Gamma}(2))=2$. Thus $\Gamma$ impose independent conditions. \qed

Therefore, 10 points fail to impose independent conditions only if one of the following situations occurs:
\begingroup

% 0 = smooth conic on P.
% 1 = split conic on P.
\def\QuadricConicCase#1{%
\begin{tikzpicture}[
  line cap=round,
  line join=round,
  bluecurve/.style={blue,semithick},
  hiddenblue/.style={
    blue,semithick,
    dash pattern=on 2pt off 2pt
  }
]

% Common bounding box.
\path[use as bounding box]
  (-1.25,-0.35) rectangle (2.45,3.35);

% Plane vertices and q.
\coordinate (A) at (0.50,2.88);
\coordinate (B) at (1.50,0.11);
\coordinate (V) at (-0.70,2.01);
\coordinate (q) at (0.93,1.6889);
\coordinate (Ltop) at (1.30,2.86);

% --------------------------------------------------
% Coordinates of p and r
% --------------------------------------------------

% C has parametrization
% (1+0.65 cos(t), 0.9+0.2 cos(t)+0.22 sin(t)).
% Choose p on both C and ell.
% Adjusted center and horizontal radius of C.
\pgfmathsetmacro{\Ccx}{0.9759121284}
\pgfmathsetmacro{\Cradius}{0.6638223632}

% Intersection p of C with ell.
\pgfmathsetmacro{\pCoeff}{
  0.2+2.77*\Cradius
}
\pgfmathsetmacro{\pConst}{
  4.265-2.77*\Ccx-0.9
}
\pgfmathsetmacro{\pAngle}{
  atan2(0.22,\pCoeff)
  -acos(\pConst/sqrt(\pCoeff*\pCoeff+0.22*0.22))
}
\pgfmathsetmacro{\px}{
  \Ccx+\Cradius*cos(\pAngle)
}
\pgfmathsetmacro{\py}{
  0.9+0.2*cos(\pAngle)+0.22*sin(\pAngle)
}
\coordinate (p) at (\px,\py);

% L passes through q.
% Its intersection with the back arc of C is r.
\pgfmathsetmacro{\Lslope}{
  (2.86-1.6889)/(1.30-0.93)
}

\pgfmathsetmacro{\rCoeff}{
  0.2-\Cradius*\Lslope
}
\pgfmathsetmacro{\rAngle}{
  atan2(0.22,\rCoeff)
  -acos(
    ((\Ccx-0.93)*\Lslope+0.7889)
    /sqrt(\rCoeff*\rCoeff+0.22*0.22)
  )
}
\coordinate (r) at
  ({\Ccx+\Cradius*cos(\rAngle)},
   {0.9+0.2*cos(\rAngle)+0.22*sin(\rAngle)});

\coordinate (Lbottom) at
  ({0.93+(0.11-1.6889)/\Lslope},0.11);

% --------------------------------------------------
% Original quadric and plane
% --------------------------------------------------

\draw
  (0,0)
  .. controls (0.5,0.3) and (0.5,2.7) .. (0,3);

\draw
  (2,0)
  .. controls (1.5,0.3) and (1.5,2.7) .. (2,3);

\draw
  (0,3)
  .. controls (0.5,3.3) and (1.5,3.3) .. (2,3);

\draw
  (0,0)
  .. controls (0.5,-0.3) and (1.5,-0.3) .. (2,0);

% Plane edges with white underlays.
\draw[white,line width=1.4pt]
  (A)--(V)--(B);

\draw[thin]
  (A)--(V)--(B)--cycle;

% Front arcs of the quadric.
\draw
  (0,3)
  .. controls (0.5,2.8) and (1.5,2.8) .. (2,3);

\draw
  (0,0)
  .. controls (0.5,0.2) and (1.5,0.2) .. (2,0);

% --------------------------------------------------
% Blue L and C on Q
% --------------------------------------------------

% L is solid above q and dashed below q.
\draw[bluecurve] (Ltop)--(q);
\draw[hiddenblue] (q)--(Lbottom);

% C meets the two outlines at the endpoints of these arcs.
\begin{scope}[
  cm={\Cradius,0.2,0,0.22,(\Ccx,0.9)}
]
  % Back arc.
  \draw[hiddenblue]
    (1.395855:1)
    arc[
      start angle=1.395855,
      end angle=176.553365,
      radius=1
    ];

  % Front arc.
  \draw[bluecurve]
    (176.553365:1)
    arc[
      start angle=176.553365,
      end angle=361.395855,
      radius=1
    ];
\end{scope}
% --------------------------------------------------
% Blue conic on P through Gamma_1, p, and q
% --------------------------------------------------

\ifnum#1=0\relax

  % Smooth ellipse through p and q.
  \pgfmathsetmacro{\planeCx}{
    (0.93+\px)/2-0.85*cos(65)
  }
  \pgfmathsetmacro{\planeCy}{
    (1.6889+\py)/2+0.50*cos(65)
  }
  \pgfmathsetmacro{\planeBx}{
    (0.93-\px)/(2*sin(65))
  }
  \pgfmathsetmacro{\planeBy}{
    (1.6889-\py)/(2*sin(65))
  }

  % Full ellipse, initially dashed.
  \begin{scope}[
    cm={0.85,-0.50,\planeBx,\planeBy,
        (\planeCx,\planeCy)}
  ]
    \draw[hiddenblue] (0,0) circle[radius=1];
  \end{scope}

  % Solid portion inside the drawn plane.
  \begin{scope}
    \clip (A)--(V)--(B)--cycle;

    \begin{scope}[
      cm={0.85,-0.50,\planeBx,\planeBy,
          (\planeCx,\planeCy)}
    ]
      \draw[bluecurve] (0,0) circle[radius=1];
    \end{scope}
  \end{scope}

  % Four marked points on this conic.
  \foreach \ang in {115,155,200,240}{
    \fill[red]
      ({\planeCx+0.85*cos(\ang)+\planeBx*sin(\ang)},
       {\planeCy-0.50*cos(\ang)+\planeBy*sin(\ang)})
      circle (1pt);
  }

\else

  % Split conic: two lines meeting at T.
  % One passes through q, the other through p.
  \coordinate (T) at (-0.03,1.91);

  \coordinate (Qa) at ($(q)!-0.12!(T)$);
  \coordinate (Qb) at ($(q)!1.70!(T)$);
  \coordinate (Pa) at ($(p)!-0.12!(T)$);
  \coordinate (Pb) at ($(p)!1.35!(T)$);

  % Dashed extensions.
  \draw[hiddenblue]
    (Qa)--(Qb)
    (Pa)--(Pb);

  % Solid portions inside the drawn plane.
  \begin{scope}
    \clip (A)--(V)--(B)--cycle;
    \draw[bluecurve]
      (Qa)--(Qb)
      (Pa)--(Pb);
  \end{scope}

  % Two marked points on each line.
  \foreach \t in {0.45,0.80}{
    \fill[red] ($(q)!\t!(T)$) circle (1pt);
    \fill[red] ($(p)!\t!(T)$) circle (1pt);
  }

\fi

% --------------------------------------------------
% Intersection points and labels
% --------------------------------------------------

\fill (q) circle (1.2pt);
\fill (p) circle (1.2pt);
\fill (r) circle (1pt);

\node[right,inner sep=3pt,scale=0.5] at (q) {$q$};
\node[below right,inner sep=2pt, scale=0.5] at (p) {$p$};
\node[font=\small,scale=0.5] at ($(r)+(0.05,-0.13)$) {$r$};

\node [scale=0.5] at (-0.83,1.98) {$P$};
\node  [scale=0.5] at (2.00,2.65) {$Q$};

%\node[font=\small] at (-0.16,2.45) {$\Gamma_1$};
\node [scale=0.5] at (0.50,2.55) {$\ell$};

\node[blue,scale=0.5] at (1.30,2.42) {$L$};
\node[blue,scale=0.5] at (1.84,1.03) {$C$};

% Equations retained from the previous picture.
%\node[above] at (1.00,3.36){$w=yr-xz=0$};

%\node[above left] at (-0.48,2.94){$x=y=0$};
% Two red points on L.
\foreach \t in {0.30,0.65}{
  \fill[red]
    ($(Ltop)!\t!(q)$)
    circle (1pt);
}

% Four red points on C.
\foreach \ang in {215,255,315,340}{
  \fill[red]
    ({\Ccx+\Cradius*cos(\ang)},
     {0.9+0.2*cos(\ang)+0.22*sin(\ang)})
    circle (1pt);
}
\end{tikzpicture}%
}

% --------------------------------------------------
% Two pictures in one row
% --------------------------------------------------

\begin{center}

\begin{minipage}[t]{0.48\linewidth}
  \centering
  \resizebox{\linewidth}{!}{\QuadricConicCase{0}}
  \par\smallskip
  \noindent
  \makebox[\linewidth][l]{%
    \hspace*{0.608\linewidth}%
    \makebox[0pt][c]{%
      \small Configuration (5) in Table \ref{tab:configurations}%
    }%
  }\par
\end{minipage}\hfill
\begin{minipage}[t]{0.48\linewidth}
  \centering
  \resizebox{\linewidth}{!}{\QuadricConicCase{1}}
  \par\smallskip
  \noindent
  \makebox[\linewidth][l]{%
    \hspace*{0.608\linewidth}%
    \makebox[0pt][c]{%
      \small Configuration (7) in Table \ref{tab:configurations}%
    }%
  }\par
\end{minipage}

\end{center}
\endgroup

For case $\left(\mathrm{I}\right)$, we denote the intersection of $C'$ with $P$ to be $p$ and $q$ respectively ($p$ and $q$ might collide). Similar computation shows that for $p \neq q$,  $\Gamma$ fails to impose independent conditions only if $\Gamma_1$ and $\{p,q\}$ lie on a plane conic on $P$, i.e., configurations (4) and (6) in Table \ref{tab:configurations}; if $p=q$, then $\Gamma$ fail to impose independent conditions only if $\Gamma_1 \cup p$ lie on a plane conic on $P$, which is tangent to $\ell$ at p, i.e., configurations (11) and (12) in Table \ref{tab:configurations}. For case $\left(\mathrm{III}\right)$, we denote $L_1 \cap l = p$ and $L_2 \cap l = q$. Since $L_1 \cap L_2 = \emptyset$, $p$ and $q$ cannot collide. Similar computation shows that $\Gamma$ fails to impose independent conditions only if $\Gamma_1$ and $\{p,q\}$ lie on a plane conic on $P$, i.e., configurations (7) and (8) in Table \ref{tab:configurations}.

\subsection{$S_2$ : singular cubic scroll} \label{3.5}

By blowing up $S_2$ via its cone point $p$, we have the following diagram: 
\[\begin{tikzcd}
    \mathbb{F}_3 \arrow[r, hookrightarrow, "|e+3f|"] \arrow[d,"b"] &\Bl_p\bb^4 \arrow[d] \\
    S_2 \arrow[r] & \bb^4
\end{tikzcd}\] 

Take any curve $E$ of class $e+5f$ in $\mathbb{F}_3$, and we now compute its genus and degree. By adjunction, we have $$2p_a(E)-2=E(E+K_{\mathbb{F}_3})$$ Therefore, $p_a(E)=1/2((e+5f)(-e)+2)=0$. Since the hyperplane class is given by $e+3f$, we get $\deg E = (e+5f)(e+3f)=5$. Thus $E$ is a genus 0 curve with degree 5. Since $H^0(\mathcal{O}(e+5f)) \simeq H^0(\mathcal{O}_{\mathbb{P}^1}(2)) \oplus H^0(\mathcal{O}_{\mathbb{P}^1}(5))$, we have $h^0(\mathcal{O}(e+5f))=9$. That is, if we have $8$ distinct points in $\mathbb{F}_3$, we can find an element $E$ in the linear series $|e+5f|$, which passes through these 8 points.

\begin{remark}
    We need to consider the case when the cone point is one of the 10 marked points. Note that $(2e+6f)e=0$ implies that for any quadric $Q$ on $\mathbb{F}_3$, it either contains the exceptional divisor $e$, or it does not intersect $e$ at all. That is, if a quadric vanishes at a point of $e$, then it must vanish along $e$. So if $p$ is one of the 10 marked points, we can view it as a single point on $e$.
\end{remark}
Denote the set of 10 marked points $p_1, \dots, p_{10}$ by $\Gamma$. Since any 9 points cannot be collinear, we may assume that $p_9$ and $p_{10}$ do not lie on the same fiber, and they both do not come from the cone point. Take $E \in |e+5f|$ such that it passes through $p_1, \dots, p_8$. Furthermore, if the preimage of $\Gamma$ impose independent conditions on quadrics on $\mathbb{F}_3$, then $\Gamma$ impose independent conditions on quadrics on $S_2$.

\begin{prop}
Assume $p_9$ and $p_{10}$ do not lie on $E$, then we have $\Gamma$ impose independent conditions on quadrics on $\mathbb{F}_3$.    
\end{prop}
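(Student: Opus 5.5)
Throughout, write $\widetilde{p}_i$ for the preimage of $p_i$ on $\mathbb{F}_3$ and $D=2e+6f$ for the quadric class $b^*\mathcal{O}(2)$.

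The plan is to show that the space $H^0(\mathcal{O}_{\mathbb{F}_3}(D))\otimes\mathcal{I}_{\Gamma}$ has the expected dimension $h^0(\mathcal{O}(D))-10$. First record the numerics. Since $D\cdot e=0$ and $D=2(e+3f)$ is the pullback of $\mathcal{O}_{S_2}(2)$, we have $h^0(\mathcal{O}_{\mathbb{F}_3}(D))=12$, which matches $h^0(\mathcal{O}_{S_2}(2))=12$ from the surjectivity established earlier. So we must show that exactly $2$ sections of $\mathcal{O}(D)$ vanish on $\Gamma$.

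\textbf{Step 1: the sections of $D$ vanishing on $E$.} Use the sequence
\[
0\to\mathcal{O}_{\mathbb{F}_3}(D-E)\to\mathcal{O}_{\mathbb{F}_3}(D)\to\mathcal{O}_E(D)\to 0,
\qquad D-E=e+f.
\]
The class $e+f$ has $h^0=h^0(\mathcal{O}_{\mathbb{P}^1}(1))=2$, since $\pi_*\mathcal{O}(e+f)=\mathcal{O}(1)\oplus\mathcal{O}(-2)$. Its $h^1$ vanishes, because $h^1(\mathcal{O}(1))+h^1(\mathcal{O}(-2))=0+1$ is not zero, so I must be careful here. In fact $h^1(\mathcal{O}(e+f))=1$. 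This means restriction to $E$ has a one-dimensional cokernel, and $H^0(\mathcal{O}(D))\to H^0(\mathcal{O}_E(D))$ has image of dimension $12-2=10$.

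Every member of $|e+f|$ is $e$ plus a fiber, because $(e+f)\cdot e=-2$ forces $e$ to be a fixed component. Hence the sections of $D$ vanishing on $E$ all have the form $s_E\cdot s_e\cdot s_F$, with $F$ a fiber.

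\textbf{Step 2: the conditions from $p_1,\dots,p_8$.} On $E\cong\mathbb{P}^1$ (the irreducible case) we have $\deg\mathcal{O}_E(D)=D\cdot E=(2e+6f)(e+5f)=-6+10+6=10$. So the image $W\subset H^0(\mathcal{O}_{\mathbb{P}^1}(10))$ is a hyperplane, and the eight points impose conditions on $W$. I would argue that they impose $8$ independent conditions on $W$. Eight points on $\mathbb{P}^1$ impose independent conditions on $H^0(\mathcal{O}(10))$ with a $3$-dimensional kernel, and that kernel meets the hyperplane $W$ in dimension at least $2$, so $8$ conditions are cut on $W$. Therefore
\[
\dim\{s\in H^0(D): s|_{p_1,\dots,p_8}=0\}=2+(10-8)=4.
\]
If instead $E$ is reducible, I would split it into components (fibers, $e$, and a section of class $e+kf$). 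The 8 points must then distribute so that no three are collinear and no five are coplanar, by Proposition~\ref{assumption}, and the same count would be carried out componentwise.

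\textbf{Step 3: the points $p_9,p_{10}$.} Among the 4-dimensional space above sit the 2-dimensional family $s_Es_e s_F$. Since $p_9,p_{10}\notin E$, lie on distinct fibers, and are not the cone point, the fiber $F$ through $p_9$ does not contain $p_{10}$. Hence $p_9$ and $p_{10}$ impose independent conditions on this subfamily, and a fortiori on the $4$-dimensional space. This leaves $2=12-10$ sections, which gives independence. By Step 2, the remaining work is making this restriction argument precise.

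\textbf{Main obstacle.} The hard part is Step 2 and the reducible-$E$ cases. One must check that the hyperplane $W$ (of restrictions of quadrics to $E$) does not lose a condition at the eight points. One must also handle degenerate $E$ (reducible, or containing $e$ when the cone point is marked) without nonreduced structure. I would first verify the irreducible case via the explicit kernel count above. Then, for each reducible type, I would redo the count with the Mayer–Vietoris sequence at the nodes, as the paper did for $S_3$ and $S_4$.
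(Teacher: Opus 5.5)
Your Step 3 is correct, and it is a more direct finish than the paper's. Once you know that the sections of $\mathcal{O}_{\mathbb{F}_3}(2e+6f)$ vanishing at $p_1,\dots,p_8$ form a $4$-dimensional space, look at its $2$-dimensional subspace $s_E\cdot H^0(\mathcal{O}(e+f))=s_Es_e\cdot H^0(\mathcal{O}(f))$. This subspace maps isomorphically onto the values at $p_9,p_{10}$, because $s_E$ and $s_e$ do not vanish there and $H^0(\mathcal{O}(f))$ separates distinct fibers. That gives $h^0(\mathcal{I}_\Gamma(2))=4-2=2$ at once. The paper reaches the same number by comparing the connecting maps $\delta,\delta'$ into $H^1(\mathcal{O}(e+f))$. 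Your version needs no $H^1$ bookkeeping and works verbatim for reducible $E$, since it only uses $E\in|e+5f|$ and $p_9,p_{10}\notin E$.

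The gap is where the number $4$ comes from. In Step 2 the inference runs the wrong way. Let $K$ be the $3$-dimensional space of sections of $\mathcal{O}_{\mathbb{P}^1}(10)$ vanishing at the eight points. The number of conditions they impose on the hyperplane $W$ is $10-\dim(K\cap W)$, so $\dim(K\cap W)\ge 2$ only yields \emph{at most} $8$ conditions. You would need $K\not\subset W$, which you do not show, and the reducible cases are not treated at all.

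None of this is needed. The missing input is the standing assumption: any nine, hence any eight, of the marked points impose independent conditions on quadrics in $\mathbb{P}^4$. Evaluating a quadric at the $p_i$ equals evaluating its pullback to $\mathbb{F}_3$ at the preimages. So the eight preimages impose independent conditions on the $12$-dimensional space $H^0(\mathcal{O}_{\mathbb{F}_3}(2e+6f))$. This gives $h^0(\mathcal{I}_{\{p_1,\dots,p_8\}}(2e+6f))=4$ for every $E$ simultaneously, which is exactly how the paper gets it. With Step 2 replaced by this, your ``main obstacle'' disappears and the proof is complete. The garbled sentence about $h^1(\mathcal{O}(e+f))$ in Step 1 can then simply be dropped.
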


\textit{Proof}. Denote the set of $p_1, \dots, p_8$ by $\Gamma_1$. We now consider short exact sequences on $\mathbb{F}_3$.

We have 
\[\begin{tikzcd}
0 \arrow[r] & \mathcal{I}_E(2) \arrow[r] & \mathcal{I}_{\Gamma_1}(2) \arrow[r] & \mathcal{I}_{\Gamma_1}(2)|_E \arrow[r] & 0 \\
0 \arrow[r] & \mathcal{I}_{E \cup \{p_9, p_{10}\}}(2) \arrow[u] \arrow[r] & \mathcal{I}_{\Gamma_1 \cup \{p_9, p_{10}\}}(2) \arrow[u] \arrow[r] & \mathcal{I}_{\Gamma_1}(2)|_E \arrow[u] \arrow[r] & 0
\end{tikzcd}\]
Note that $\mathcal{I}_E(2) = \mathcal{O}(-e-5f+2e+6f))=\mathcal{O}(e+f)$. Thus $h^0(\mathcal{I}_E(2))=2$ and $h^1(\mathcal{I}_E(2))=1$. Since points in $\Gamma_1$ impose independent conditions on quadrics on $\mathbb{F}_3$, we have $h^0(\mathcal{I}_{\Gamma_1}(2))=4$ and $h^1(\mathcal{I}_{\Gamma_1}(2))=0$. In particular, the connecting morphism $H^0(\mathcal{I}_{\Gamma_1}(2)|_E) \rightarrow H^1(\mathcal{I}_E(2))$ is surjective. We also have $\mathcal{I}_{E \cup \{p_9, p_{10}\}}(2)=\mathcal{I}_{\{p_9, p_{10}\}}(e+f)$. Now consider $$0 \rightarrow \mathcal{I}_{\{p_9, p_{10}\}}(e+f) \rightarrow \mathcal{O}_{\mathbb{F}_3}(e+f) \rightarrow \mathcal{O}_{\{p_9,p_{10}\}}(e+f) \rightarrow 0.$$ By our assumption that $p_9, p_{10}$ do not lie on the same fiber and away from $e$, we have the section of $\mathcal{O}_{\mathbb{F}_3}(e+f)$ cannot vanish along both $p_9$ and $p_{10}$. Since $h^0(\mathcal{O}_{\mathbb{F}_3}(e+f))=h^0(\mathcal{O}_{\{p_9,p_{10}\}}(e+f))=2$, we have $$H^0(\mathcal{O}_{\mathbb{F}_3}(e+f)) \xrightarrow{\sim} H^0(\mathcal{O}_{\{p_9,p_{10}\}}(e+f)).$$ This implies that $h^0(\mathcal{I}_{\{p_9, p_{10}\}}(e+f))=0$ and $H^1(\mathcal{I}_{\{p_9, p_{10}\}}(e+f)) \xrightarrow{\sim} H^1(\mathcal{O}_{\mathbb{F}_3}(e+f))$ is an isomorphism. Therefore, we have the following commutative diagram: 
\[\begin{tikzcd}
H^0(\mathcal{I}_{\Gamma_1}(2)|_E) \arrow[r, "\delta'"] \arrow[d, equal, "id"'] & H^1(\mathcal{I}_{E \cup \{p_9, p_{10}\}}(2)) \arrow[d, "\sim", "\beta"'] \\
H^0(\mathcal{I}_{\Gamma_1}(2)|_E) \arrow[r, "\delta"] & H^1(\mathcal{O}_{\mathbb{F}_3}(e+f))
\end{tikzcd}\]
Since $\delta$ is surjective, we have $\delta'$ is surjective too. Therefore, we get
\begin{align*}
h^0(\mathcal{I}_{\Gamma_1 \cup \{p_9, p_{10}\}}(2)) &= h^0(\mathcal{I}_{E \cup \{p_9, p_{10}\}}(2)) + h^0(\mathcal{I}_{\Gamma_1}(2)|_E) - 1 \\
&= 0 + 3 - 1 \\
&= 2.
\end{align*}
That is, $\Gamma=\Gamma_1 \cup \{p_9, p_{10}\}$ imposes independent conditions on $\mathbb{F}_3$. \qed

\begin{prop}
Assume $p_1, \dots, p_9 \in E$, and $p_{10} \notin E$, where $p_{10}$ does not come from the cone point. Then $\Gamma$ imposes independent conditions on quadrics on $\mathbb{F}_3$.
\end{prop}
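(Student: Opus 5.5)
The plan is to run the same two-row diagram argument as in the previous proposition, now with $\Gamma_1=\{p_1,\dots,p_9\}\subset E$ and the single extra point $p_{10}\notin E$. All sheaves live on $\mathbb{F}_3$, and quadrics pull back to $|2e+6f|$. As before, $\mathcal{I}_E(2)\cong\mathcal{O}(e+f)$, with $h^0=2$ and $h^1=1$.

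\textbf{Step 1: sections of $\mathcal{O}_E(2)$ vanishing on $\Gamma_1$.} The curve $E$ is a reduced member of $|e+5f|$ of arithmetic genus $0$. The restriction of $2e+6f$ to it has degree $(2e+6f)(e+5f)=10$. Hence $\mathcal{I}_{\Gamma_1}(2)|_E$ is a line bundle of degree $1$ on $E$, and I will show $h^0(\mathcal{I}_{\Gamma_1}(2)|_E)=2$. For irreducible $E$ this is Riemann--Roch on $\mathbb{P}^1$. For reducible $E$ (components of type $e$, $f$, $e+3f$, $e+4f$, etc.) I would run through the component degrees. I will use that no three marked points are collinear (Proposition \ref{assumption}) and that any $8$ points impose independent conditions, so that the degree distributes so as to give exactly $2$ sections with $h^1=0$.

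\textbf{Step 2: the ideal of $E\cup\{p_{10}\}$.} Since $p_{10}\notin E$ and $p_{10}$ is not on $e$, and $|e+f|$ is a base-point-free pencil off $e$, we get $\mathcal{I}_{E\cup p_{10}}(2)=\mathcal{I}_{p_{10}}(e+f)$. Then $h^0=1$, and $H^1(\mathcal{I}_{p_{10}}(e+f))\to H^1(\mathcal{O}(e+f))$ is an isomorphism, since evaluation at $p_{10}$ is surjective.

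\textbf{Step 3: comparing the two sequences.} I will compare
\[
0\to\mathcal{I}_{E\cup p_{10}}(2)\to\mathcal{I}_{\Gamma}(2)\to\mathcal{I}_{\Gamma_1}(2)|_E\to 0
\]
with the top row $0\to\mathcal{I}_E(2)\to\mathcal{I}_{\Gamma_1}(2)\to\mathcal{I}_{\Gamma_1}(2)|_E\to0$. The standing assumption says any $9$ points, in particular $\Gamma_1$, impose independent conditions. So $h^0(\mathcal{I}_{\Gamma_1}(2))=15-3-9=3$ on $S_2$ (the quadrics restricted to $S_2$ form a $12$-dimensional space) and $h^1=0$. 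Therefore the connecting map $\delta:H^0(\mathcal{I}_{\Gamma_1}(2)|_E)\to H^1(\mathcal{O}(e+f))$ is surjective. Checking dimensions: $3=2+2-1$, consistent with Step 1. By naturality $\delta'=\beta^{-1}\circ\delta$ is surjective, which gives
\[
h^0(\mathcal{I}_\Gamma(2))=1+2-1=2.
\]
Thus $\Gamma$ imposes $10$ independent conditions on the $12$-dimensional space, and hence on quadrics in $\bb^4$ by the surjectivity remark after the $S_i$ proposition.

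\textbf{Main obstacle.} The delicate point is Step 1 for reducible $E$, where the degree-$1$ twist could be unevenly distributed across components. If some component carried too many of the $p_i$, $h^0$ would jump, but that would force collinear or coplanar points, or $8$ points on a hyperplane. If that case analysis proves messy, I would instead obtain $h^0(\mathcal{I}_{\Gamma_1}(2)|_E)=2$ directly from the top row, using $h^0(\mathcal{I}_{\Gamma_1}(2))=3$, $h^0(\mathcal{I}_E(2))=2$ and $h^1(\mathcal{I}_{\Gamma_1}(2))=0$. That route gives $\dim=3-2+1=2$ with no case analysis at all. This is the route I will actually use.
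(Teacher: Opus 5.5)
Your argument is correct and is essentially the paper's proof. You get $h^0(\mathcal{I}_{\Gamma_1}(2)|_E)=2$ from the top row, using $\mathcal{I}_E(2)\cong\mathcal{O}(e+f)$ with $h^0=2$, $h^1=1$ and the independence of the nine points; you show $h^0=h^1=1$ for $\mathcal{I}_{E\cup p_{10}}(2)\cong\mathcal{I}_{p_{10}}(e+f)$; and you carry surjectivity of the connecting map through the isomorphism on $H^1$ to conclude $1+2-1=2$. Adopting the fallback in Step 1 was the right call, since the component-by-component analysis would also have required justifying that $E$ is reduced, which you asserted without proof.
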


\textit{Proof}. Denote $p_1, \dots, p_9$ by $\Gamma_2$. As in the computation above, from exact sequence $$0 \rightarrow  \mathcal{I}_E(2) \rightarrow \mathcal{I}_{\Gamma_2}(2) \rightarrow \mathcal{I}_{\Gamma_2}(2)|_E \rightarrow 0,$$ we have $h^0(\mathcal{I}_{\Gamma_2}(2)|_E)=2$ and $h^1(\mathcal{I}_{\Gamma_2}(2)|_E)=0$. Then consider $$0 \rightarrow \mathcal{I}_{E \cup \{p_{10}\}}(2) \rightarrow \mathcal{I}_E(2) \rightarrow \mathcal{O}_{p_{10}}(2) \rightarrow 0,$$ we have $h^0(\mathcal{I}_{E \cup \{p_{10}\}}(2))=h^1(\mathcal{I}_{E \cup \{p_{10}\}}(2))$. Moreover, we have $h^0(\mathcal{I}_{E \cup \{p_{10}\}}(2))=h^0(\mathcal{I}_{p_{10}}(e+f))=1$. We get $h^1(\mathcal{I}_{E \cup \{p_{10}\}}(2))=1$. Finally, we consider $$0 \rightarrow \mathcal{I}_{E \cup \{p_{10}\}}(2) \rightarrow \mathcal{I}_{\Gamma_2 \cup \{p_{10}\}}(2) \rightarrow \mathcal{I}_{\Gamma_2}(2)|_E \rightarrow 0.$$ Similar to the argument above, $H^0(\mathcal{I}_{\Gamma_2}(2)|_E) \rightarrow H^1(\mathcal{I}_{E \cup \{p_{10}\}}(2))$ is surjective. 

Thus we claim that
\begin{align*}
h^0(\mathcal{I}_{\Gamma_2 \cup \{p_{10}\}}(2)) &= h^0(\mathcal{I}_{E \cup \{p_{10}\}}(2)) + h^0(\mathcal{I}_{\Gamma_2}(2)|_E) - 1 \\
&= 1 + 2 - 1 \\
&= 2.
\end{align*}
That is, $\Gamma=\Gamma_2 \cup \{p_{10}\}$ impose independent conditions on quadrics on $\mathbb{F}_3$. \qed

Therefore, $\Gamma$ fail to impose independent conditions on quadrics only if they lie on the image of $E$ in $S_2$. 

Since $E \in |e+5f|$, we have $E \cdot e = (e+5f)e=2$. Therefore, $E$ either contains $e$ or intersection $e$ in a scheme of length two. If $E$ contains $e$, then its image will be a genus 1 curve of degree 5. For the second case, 2 points collapse when taking the image under the contraction. Thus the image of $E$ under the contraction will be a curve of degree 5 and genus 1. If $E$ intersects $e$ at 2 distinct points, the image is the specialization of the nodal curve of degree 5; if $E$ intersects $E$ at a point with multiplicity 2, then the image is the specialization of a cuspidal curve of degree 5. However, $E$ may not be irreducible. To consider its image under the blow up, we are going to consider all the cases of class $e+5f$ on $\mathbb{F}_3$. 
Following are all the possibilities, with the corresponding possible configurations in Table \ref{tab:configurations}:

$\left(\mathrm{I}\right)$ $E$ is irreducible: Configuration (2) and (9).

$\left(\mathrm{II}\right)$ $E$ is the union of irreducible classes $e+4f,f$: Configuration (3) and (10).

$\left(\mathrm{III}\right)$ $E$ is the union of irreducible classes $e+3f,f$ and $f$: Configuration (6).

$\left(\mathrm{IV}\right)$ $E$ is the union of irreducible classes $e$,$f$,$f$,$f$,$f$ and $f$: Configuration (15).

\begin{remark}
    It is not possible for $E$ to be the union of irreducible classes $e+2f$, $f$, $f$ and $f$. In fact, $(e+2f)e=-1$ forces $e+2f$ to contain $e$, and thus not irreducible. 
\end{remark}

Therefore, $\Gamma$ fails to impose independent conditions on quadrics only if it is one of (2), (3), (6), (9), (10), (15) in the configurations in Table \ref{tab:configurations}.

\subsection{$S_1$ : the smooth cubic scroll}

In this case, we have \begin{tikzcd}
    S_1 \simeq \Bl_p(\mathbb{P}^2) \arrow[r, hookrightarrow, "|e+2f|"] & \mathbb{P}^4.
\end{tikzcd} Take any curve of class $2e+3f$ in $S_1$, and we now compute its genus and degree. By adjunction, we have $$2p_a(E)-2=E(E+K_{S_1})$$ Therefore, $p_a(E)=1/2((2e+3f)(2e+3f-2e-3f)+2)=1$. Since the hyperplane class is given by $e+2f$, we get $\deg E = (2e+3f)(e+2f)=5$. Thus $E$ is a genus 1 curve of degree 5. Note that in this case, we again have $h^0(2e+3f)=9$. Use a similar argument as in section \ref{3.5}, $\Gamma$ fails to impose independent conditions only if $\Gamma \subset E$. 

Now we list all the possibilities of a curve $E$ with class $2e+3f$, with the corresponding possible configurations in Table \ref{tab:configurations}:

$\left(\mathrm{I}\right)$ $E$ is irreducible of class $2e+3f$: Configuration (1), (2) and (9).

$\left(\mathrm{II}\right)$ $E$ is the union of irreducible classes $2e+2f$ and $f$: Configuration (3) and (10).

$\left(\mathrm{III}\right)$ $E$ is the union of irreducible classes $e+2f$ and $e+f$: Configuration (4) and (11).

$\left(\mathrm{IV}\right)$ $E$ is the union of irreducible classes $e+3f$ and $e$: Configuration (3) and (10). 

$\left(\mathrm{V}\right)$ $E$ is the union of irreducible classes $e+2f,e$ and $f$: Configuration (6) and (12).

$\left(\mathrm{VI}\right)$ $E$ is the union of irreducible classes $e+f, e+f$ and $f$: Configuration (5) and (13).

$\left(\mathrm{VII}\right)$ $E$ is the union of irreducible classes $e+f,e,f$ and $f$: Configuration (7).

Therefore, $\Gamma$ fails to impose independent conditions on quadrics only if it is one of (1),(2), (3),(4),(5),(6),(7),(9),(10),(11),(12),(13) in the configurations of Table \ref{tab:configurations}.\\

As a summary of this section, we have: 
\begin{prop}
$\Gamma$ fails to impose independent conditions on quadrics only if it is one of the 15 configurations of Table \ref{tab:configurations}.
\label{prop1}
\end{prop}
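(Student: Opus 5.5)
The plan is to assemble the case analysis of Sections~\ref{scrolls} and~\ref{Section4} into a single chain of reductions. First, by the Lemma at the start of Section~\ref{scrolls}, any nine of the points, say $p_1,\dots,p_9$, lie on a flat limit of smooth cubic scrolls. By the ruling-map classification in $\overline{\mathcal{M}}_0(\mathbb{G}(1,4),3)$ and the nondegeneracy forced by the standing assumption, this limit is projectively equivalent to one of $S_1,\dots,S_6$. If $p_{10}\notin S_i$, the Proposition preceding Section~\ref{Section4} shows that $\Gamma=\{p_1,\dots,p_{10}\}$ imposes independent conditions, so we may assume $\Gamma\subset S_i$. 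By the surjectivity $H^0(\mathcal{O}_{\bb^4}(2))\to H^0(\mathcal{O}_{S_i}(2))$ recorded in the subsequent Remark, failure in $\bb^4$ is then equivalent to failure on $S_i$.

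Next I would run through the six surfaces, invoking Proposition~\ref{assumption}: no eight points on a hyperplane, no three collinear, no five coplanar. These constraints cut down the distributions of points among components.
\begin{itemize}
\item For $S_6$, the counting argument leaves only the splittings with seven points in a $\bb^3$ and three outside, so $\Gamma$ always imposes independent conditions and $S_6$ contributes nothing.
\item For $S_5$, only the distribution $(4,2,4)$ survives. The two subcases $q\notin\ell$ and $q\in\ell$ give configurations (5), (7), (8) and (13), (14), (15).
\item For $S_4$ and $S_3$, failure forces the six points on the quadric to lie on a curve of class $e+3f$ on $\mathbb{F}_2$, respectively of bidegree $(1,2)$. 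It also forces the four planar points, together with the intersection points with the common line, to lie on a conic in $P$, tangent to that line when the two intersection points coincide.
\item For $S_2$ and $S_1$, failure forces $\Gamma\subset E$ with $E\in|e+5f|$ on $\mathbb{F}_3$, respectively $E\in|2e+3f|$ on $S_1$.
\end{itemize}

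The remaining step is bookkeeping: in each case, enumerate the reduced decompositions of the relevant class and identify the image curve with an entry of Table~\ref{tab:configurations}. Gluing the curve on the quadric or scroll to the plane conic along the common line, I would compute the arithmetic genus and degree of the union. When the two curves meet the common line in two distinct points, they glue at ordinary nodes, giving configurations (1)--(8). When the intersection points coincide with a tangency, or the curve passes through the cone point, the union acquires an elliptic $m$-fold point, giving configurations (9)--(15). In each case I would check that the result is a degree-$5$ curve of arithmetic genus $1$.

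Taking the union over $i$ of the configurations obtained yields exactly the fifteen configurations in the table. The main obstacle is this identification step for the tangential and cone-point cases. There one must verify that the singularity of the glued curve is an elliptic $m$-fold point, with valency equal to the number of branches, rather than some other genus-one singularity. I would first check this with the local-ring description in the Appendix, computing the $\delta$-invariant and comparing it with the number of branches. I would also confirm that the non-reduced decompositions excluded by Proposition~\ref{assumption}, for example $e+f$ or a doubled fiber, do not produce extra configurations.
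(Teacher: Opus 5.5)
Your proposal is essentially the paper's own argument. Proposition~\ref{prop1} is stated there as the summary of the reduction in Section~\ref{scrolls} to $\Gamma\subset S_i$ and the surface-by-surface analysis of Section~\ref{Section4}, with the configurations read off from the reduced members of $|e+3f|$ on $\mathbb{F}_2$, the bidegree $(1,2)$ curves, $|e+5f|$ on $\mathbb{F}_3$, $|2e+3f|$ on $S_1$, and the plane conics, exactly as you outline.

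One correction to your bookkeeping heuristic: passing through the cone point does not by itself create an elliptic $m$-fold point. On $S_4$ the cubic of class $e+3f$ always passes through the vertex, yet for $p\neq q$ it yields the nodal configurations (4) and (6). On $S_2$, the decompositions $(e+4f)+f$ and $(e+3f)+f+f$ give (3) and (6) whenever the branches at the vertex have distinct tangents. Also, configurations (1), (2) and (9) come from irreducible members on $S_1$ or $S_2$, not from gluing to a plane conic.
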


\section{Proof of Theorem \ref{thm}}\label{Sectionproof} 

First, we assume $p_1, \dots, p_{10}$ fail to impose independent conditions. By proposition \ref{prop1}, $\Gamma$ lies on a curve with one of the 15 configurations. Note that for a fixed component of $E$, not all quadrics vanishing at $p_1, \dots, p_{10}$ will contain this component. Otherwise there exists three quadrics (which form a complete intersection) whose intersection contains a component of $E$. This contradicts the fact that $C$ is irreducible. Since containing a component is a closed condition and we have finite number of components for every configuration of $E$, we can find $Q$ vanishing along $p_1, \dots, p_{10}$, such that $Q$ does not contain any component of $E$. By B\'ezout's theorem, $|Q \cap E|=10$ with multiplicity. Furthermore, we have $\{p_1, \dots, p_{10}\} \subset Q \cap E$. Thus we get $\{p_1, \dots, p_{10}\} = Q \cap E$. In particular, $p_1, \dots, p_{10}$ are smooth points on $E$, and $p_1, \dots, p_{10}$ is a transverse intersection of a quadric and $E$. Thus $\mathcal{O}_E(p_1+ \cdots + p_{10}) \simeq \mathcal{O}_E(2)$. \\

Conversely, assume that there is $E$ of the configurations, such that $\mathcal{O}_E(p_1+ \cdots + p_{10}) \simeq \mathcal{O}_E(2)$. We first prove we can find a smooth curve as a complete intersection of three quadrics passing through these 10 points $p_1, \dots, p_{10}$. Note that we have the inclusion $$H^0(\mathcal{I}_E(2)) \subset H^0(\mathcal{I}_{\{p_1, \dots, p_{10}\}}(2)).$$ From appendix \ref{quadrics}, we know that the ideal sheaf of $E$ is generated by quadrics. Thus its base locus is exactly $E$. By definition, we have $\{p_1, \dots, p_{10}\}$ contained in the base locus of $H^0(\mathcal{I}_{\{p_1, \dots, p_{10}\}}(2))$. Since $\mathcal{O}_E(p_1+ \cdots + p_{10}) \simeq \mathcal{O}_E(2)$, we have a quadric $Q$ intersecting $E$ at $\{p_1, \dots, p_{10}\}$. Therefore, the base locus of $H^0(\mathcal{I}_{\{p_1, \dots, p_{10}\}}(2))$ is contained in $\{p_1, \dots, p_{10}\}$. In particular, the base locus of $H^0(\mathcal{I}_{\{p_1, \dots, p_{10}\}}(2))$ is exactly $\{p_1, \dots, p_{10}\}$. Iterate Bertini's theorem, we get an open dense subset of the Grassmannian $G(3,H^0(\mathcal{I}_{\{p_1, \dots, p_{10}\}}(2)))$, such that the intersection of each set of three quadrics is smooth away from the base locus $\{p_1, \dots, p_{10}\}$. 
Let $\{Q_i\}_{1 \leq i \leq 5}$ be the generators of $H^0(\mathcal{I}(2))$. We have \[\bigcap_{i=1}^{5} Q_i = E.\] Recall that we have $Q$ such that $Q \cap E = \{p_1, \dots, p_{10}\}$. Take the intersection of these quadrics, we have \[Q \cap \left(\bigcap_{i=1}^{5} Q_i\right) = \Gamma.\] From this, we know that the differentials of all quadrics span the four dimensional cotangent space at every marked point. Thus there exists a triple which has rank three at all ten points. That is to say, there exists an element in $G(3,H^0(\mathcal{I}_{\{p_1, \dots, p_{10}\}}(2)))$, such that the intersection of the corresponding quadrics is smooth at $\{p_1, \dots, p_{10}\}$. Since being smooth at a point is an open condition, there exists an open dense set of $G(3,H^0(\mathcal{I}_{\{p_1, \dots, p_{10}\}}(2)))$, where the intersection of each set of three quadrics is smooth at $\{p_1, \dots, p_{10}\}$. Finally, we conclude that there exist three quadrics $Q_1', Q_2', Q_3'$ (coming from the intersecting of the two open dense subsets), whose intersection is smooth. We may assume their intersection is a complete intersection since being a complete intersection is also an open condition. Then \[C :=\bigcap_{i=1}^{3} Q_i'\] is the desired curve. \\

We show that $p_1, \dots, p_{10}$ fail to impose independent conditions by showing $$H^0(\mathcal{O}_{\mathbb{P}^4}(2)) \rightarrow H^0(\mathcal{O}_{\Gamma}(2)) $$ fails to be surjective. Since $\Gamma \subset E$, it suffices to prove $H^0(\mathcal{O}_{E}(2)) \rightarrow H^0(\mathcal{O}_{\Gamma}(2))$ fails to be surjective. Consider the exact sequence $$0 \rightarrow \mathcal{O}_E(2)(-\Gamma) \simeq \mathcal{O}_E \rightarrow \mathcal{O}_E(2) \rightarrow \mathcal{O}_{\Gamma}(2) \rightarrow 0,$$ which induces long exact sequence in cohomology: $$0 \rightarrow H^0(\mathcal{O}_E) \rightarrow H^0(\mathcal{O}_E(2)) \rightarrow H^0(\mathcal{O}_{\Gamma}(2)) \rightarrow H^1(\mathcal{O}_E) \rightarrow \cdots $$ Since $E$ is proper, geometrically reduced and connected, we have $h^0(\mathcal{O}_E)=1$. Since $h^0(\mathcal{O}_\Gamma(2))=10$, it suffices to show $h^0(\mathcal{O}_E(2))=10$ for each configuration $E$. When $E$ is smooth, with one node, or with a cusp, Riemann-Roch shows that $h^0(\mathcal{O}_E(2))= \deg E + 1 - g(E)$=10. Now let's consider the cases except (1),(2) and (9) in Table \ref{tab:configurations}. For cases where the singularities are all nodes (namely (3)--(8) in Table \ref{tab:configurations}), we consider an example and the rest will be similar. Let's consider the case \\
\[\begin{tikzpicture}[
    v/.style={circle, draw, thick, minimum size=6mm, inner sep=0pt}
]
    % Nodes with 0 inside
    \node[v, label=above:$1$] (TL) at (0, 2) {0};
    \node[v, label=above:$2$] (TR) at (2, 2) {0};
    \node[v, label=below:$1$] (BL) at (0, 0) {0};
    \node[v, label=below:$1$] (BR) at (2, 0) {0};

    % Edges
    \draw[thick] (TL) -- (BL) node[midway, left=2pt] {$p_1$};
    \draw[thick] (TL) -- (TR) node[midway, above=2pt] {$p_2$};
    \draw[thick] (BL) -- (BR) node[midway, below=2pt] {$p_3$};
    \draw[thick] (TR) -- (BR) node[midway, right=2pt] {$p_4$};
\end{tikzpicture}\]
where $p_i$ are the corresponding nodes. We have $$ 0 \rightarrow H^0(\mathcal{O}_E(2)) \rightarrow H^0(\mathcal{O}_{\mathbb{P}^1}(2))^{\oplus 3} \oplus H^0(\mathcal{O}_{\mathbb{P}^1}(4)) \xrightarrow{\star} \bigoplus_{i=1}^4 H^0(\mathcal{O}_{p_i}(2)) \rightarrow \cdots$$ We have the map $\star$ to be surjective. In fact, we have $0 \rightarrow \mathcal{O}_{\mathbb{P}^1} \rightarrow \mathcal{O}_{\mathbb{P}^1}(2) \rightarrow \mathcal{O}_{\{p_1,p_2\}}(2) \rightarrow 0$, and $$H^0(\mathcal{O}_{\mathbb{P}^1}(2)) \rightarrow \bigoplus_{i=1}^2 H^0(\mathcal{O}_{p_i}(2))$$ is surjective. Therefore, we conclude that 
\begin{align*}
h^0(\mathcal{O}_E(2)) &= 3h^0(\mathcal{O}_{\mathbb{P}^1}(2)) + h^0(\mathcal{O}_{\mathbb{P}^1}(4)) - h^0\left(\bigoplus_{i=1}^4 H^0(\mathcal{O}_{p_i}(2))\right) \\
&= 9 + 5 - 4 \\
&= 10.
\end{align*}

For the configurations (10)--(15) in Table \ref{tab:configurations}, we have an elliptic $m$-fold point. On the normalizations, we have $10+m$ independent sections. Locally at the singularity, requiring functions to take the same value imposes $m-1$ conditions, and the sum of their derivatives is 0 imposes one more condition. 

Therefore, $h^0(\mathcal{O}_E(2))=10+m-(m-1)-1=10$.\\

So far, we proved that $H^0(\mathcal{O}_{E}(2)) \rightarrow H^0(\mathcal{O}_{\Gamma}(2))$ fails to be surjective. In particular, $H^0(\mathcal{O}_{\mathbb{P}^4}(2)) \rightarrow H^0(\mathcal{O}_{\Gamma}(2)) $ fails to be surjective. \\

This finishes the proof of theorem \ref{thm}.

\section{Finiteness of $A^*(\mathcal{M}')$}\label{Section6}
In this section, we prove that the Chow ring $A^*(\mathcal{M}')$ is finitely generated. 

\begin{defn}
Let $\mathcal{M}_E$ denote the moduli space of tuples 
\[
(E,p_1,\ldots,p_{10},L),
\]
where:
\begin{enumerate}

\item
$L$ is very ample, and the embedding $E\hookrightarrow \bb\bigl(H^0(E,L)^\vee\bigr)$ defined by its complete linear series has one of the configurations in Table~\ref{tab:configurations}.

\item
$p_1,\ldots,p_{10}$ are ordered, distinct points in the
smooth locus of $E$;

\item
$L^{\otimes2}\cong\mathcal O_E(p_1+ \cdots +p_{10}).$
\end{enumerate}    
\end{defn}

Let $\pi:\mathcal{C}_E \rightarrow \mathcal{M}_E$ be the universal curve, with sections $\sigma_i':\mathcal{M}_E \rightarrow \mathcal{C}_E$ for $1 \leq i \leq 10$, and let $\mathcal{L}$ denote the universal line bundle.

Consider the evaluation map
\[
\ev: \Sym^2(\pi_*\mathcal L)\rightarrow\bigoplus_{i=1}^{10}\sigma_i'^*(\mathcal L^{\otimes2}).
\]
From the computation above, we know that the image of $\ev$ is a vector subbundle of rank 9. 
Therefore,
\[
\mathcal V:=\ker(\ev)
\]
is a vector bundle of rank $6$ on $\mathcal M_E$.

By Theorem \ref{thm}, we have \begin{equation}\label{generator}
\mathcal{M''} \subset G(3,\mathcal{V}) \rightarrow \mathcal{M}_E,
\end{equation}
where $\mathcal{M}''$ is moduli space parametrizing tuples $(C,p_1,\dots,p_{10},E)$, where $(C,p_1,\dots,p_{10})\in\mathcal M'$ and $E$ has one of the configurations in Table \ref{tab:configurations} and satisfying
\[
p_1,\ldots,p_{10}\in E_{\mathrm{sm}},
\qquad
\mathcal O_E(p_1+\cdots+p_{10})
\cong \mathcal O_E(2).
\] The forgetful map $\mathcal{M}'' \rightarrow \mathcal{M}'$ is proper and surjective, thus inducing surjective morphism of Chow rings.

\begin{prop}\label{finite}
    The Chow ring $A^*(\mathcal{M}_E)$ is finitely generated. 
\end{prop}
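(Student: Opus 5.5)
Since the fifteen configurations in Table \ref{tab:configurations} give locally closed strata, $\mathcal{M}_E$ is a finite disjoint union of strata $\mathcal{M}_E^{(k)}$, $1\le k\le 15$, one for each configuration. A finite disjoint union of spaces with finitely generated Chow rings has finitely generated Chow ring, so it suffices to treat each stratum separately.

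For a fixed configuration I would describe $\mathcal{M}_E^{(k)}$ as a quotient of a space built from the embedded curve. The embedded curve $E\subset\bb^4$ of type $(k)$ is unique up to projective equivalence, or at least lies in an explicit family with few moduli. For the nodal and cuspidal configurations (2)--(15), the curve is rational on each component. For (1), the smooth case, the curve is an elliptic normal quintic. Accordingly, I would write $\mathcal{M}_E^{(k)}=[Z_k/PGL_5]$, with $Z_k$ the locus of $(E,p_1,\dots,p_{10})$ where $E$ is embedded of type $(k)$. The condition $L^{\otimes 2}\cong\mathcal{O}_E(\sum p_i)$ is then the condition that the ten points are cut out by some quadric.

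In the rational cases (2)--(15) I would fix $E$ and use its stabilizer $G_k\subset PGL_5$. The stratum becomes $[U_k/G_k]$, where $U_k$ is an open subset of the space of ten points on $E_{\mathrm{sm}}$ satisfying the linear-equivalence condition. The smooth locus $E_{\mathrm{sm}}$ is a disjoint union of open subsets of $\bb^1$'s, and its generalized Jacobian is a $\mathbb{G}_m$ or $\mathbb{G}_a$, or a product of these, depending on whether the singularities are nodes, cusps or elliptic $m$-fold points. So the condition $\mathcal{O}_E(\sum p_i)\cong\mathcal{O}_E(2)$ is a single equation, given by a product of cross-ratios or a sum of coordinates, cutting out a divisor in an open subset of affine space. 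More concretely, the locus equals $\mathbb{P}(\ker \ev)\cap(\text{open})$: quadrics restricted to $E$ form $H^0(\mathcal{O}_E(2))$, which has dimension $10$, and the ten points are the zero scheme of a section. So $U_k$ is an open subset of a $\mathbb{P}^9$-bundle $\mathbb{P}H^0(\mathcal{O}_E(2))$, or rather of the finite cover ordering the points. I would therefore realize the stratum, via the equivariant Chow ring $A^*_{G_k}$, as an open subset of the ordered-points cover of $\mathbb{P}H^0(\mathcal{O}_E(2))$. Here $G_k$ is a linear algebraic group, so $A^*(BG_k)$ is finitely generated.

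The smooth case (1) needs separate handling. I would use the universal elliptic curve with a degree-$5$ line bundle. Over $\mathcal{M}_{1,1}$, fixing $L$ amounts to $\mathcal{M}_{1,1}$ together with a point, and $\mathbb{P}H^0(L^2)$ is a $\mathbb{P}^9$-bundle. The space of ordered divisors is an open subset of a finite étale cover of this bundle, namely the locus where the divisor is reduced.

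The main obstacle is that open subsets and finite covers do not automatically preserve finite generation. The ordered-points cover is not a quotient of a Grassmannian or projective bundle, so the step I would try first is to avoid it. I would instead present the stratum as an open subset of $E_{\mathrm{sm}}^{9}$ (or $\mathcal{C}^9$ over the base): the tenth point is determined by the first nine whenever the divisor class condition is solvable. In the rational cases this is $E_{\mathrm{sm}}^9$ minus diagonals and minus the locus where the tenth point collides or lands at a singularity. Its Chow ring is then a quotient of that of a product of $\bb^1$'s, or of $\mathcal{C}^9$ over $\mathcal{M}_{1,1}$, equivariantly for $G_k$. Excision gives surjectivity onto the open subset's Chow ring, and finite generation follows.
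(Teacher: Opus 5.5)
The structural first step fails. $\mathcal{M}_E$ is \emph{not} a disjoint union of the fifteen configuration strata. Smoothness of $E$ is an open condition, and smooth elliptic normal quintics with their ten markings degenerate in families to the nodal and elliptic $m$-fold configurations, so the strata are only locally closed. That is why the paper works with the closed substack $Z_E=\mathcal{M}_E\setminus\mathcal{M}_E^{\sm}$ and the excision sequence $A^*(Z_E)\to A^*(\mathcal{M}_E)\to A^*(\mathcal{M}_E^{\sm})\to 0$.

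Once ``disjoint union'' is replaced by excision, knowing that each stratum has a finitely generated Chow \emph{ring} is no longer enough. Pushforward from a closed stratum is not a ring map, and its image is only an $A^*(\mathcal{M}_E)$-submodule with no a priori finiteness. What makes the argument close is finite-dimensionality. The paper rigidifies each nonsmooth stratum $Z_\alpha$ directly: it places markings at $0,1,\infty$ on the normalization components and uses $\Pic^{\mathbf d}(E)\cong\mathbb{G}_m$ or $\mathbb{G}_a$. After an \'etale double cover in the nodal cases, $Z_\alpha$ is then an open subset of a torus. Hence $A^*(Z_\alpha)=\mathbb{Q}$, and $A^*(Z_E)$ is spanned by the finitely many classes $[\overline{Z}_\alpha]$. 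Your presentation $[U_k/G_k]$ only exhibits the stratum's Chow ring as a quotient of a $G_k$-equivariant Chow ring of products of $\bb^1$'s, finitely generated over $A^*(BG_k)$. That gives no such finite-dimensional bound.

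The smooth stratum has two further problems. First, parametrizing $L$ by ``$\mathcal{M}_{1,1}$ together with a point'' introduces an auxiliary origin that is not part of the data. So $\mathcal{C}^9$ over that base is one dimension too large, and $\mathcal{M}_E^{\sm}$ is not an open substack of it. The paper rigidifies with the markings instead: it defines $q$ by $\mathcal{O}_E(q)\cong L(-p_1-\cdots-p_4)$ and recovers $p_{10}$ from $\mathcal{O}_E(p_{10})\cong\mathcal{O}_E(2q+p_1+\cdots+p_4-p_5-\cdots-p_9)$. This makes $\mathcal{M}_E^{\sm}$ open in $\mathcal{C}_{1,9}$.

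Second, you simply assert that the Chow ring of a fiber power of the universal elliptic curve is finitely generated, but that is where the real content lies. The paper invokes Belorousski's theorem $A^*(\mathcal{M}_{1,9})=A^*(\mathcal{M}_{1,10})=\mathbb{Q}$, together with excision for $\mathcal{C}_{1,9}\setminus\bigcup_i D_i\cong\mathcal{M}_{1,10}$. This shows $A^*(\mathcal{C}_{1,9})$ is spanned by $1$ and the $[D_i]$. That finite-dimensionality of $A^*(\mathcal{M}_E^{\sm})$ is also exactly what is needed to combine with $A^*(Z_E)$ in the excision sequence.
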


\begin{lemma}\label{sm locus}
    Let $\mathcal{M}_E^{\sm} \subset \mathcal{M}_E$ be the open substack parametrizing tuples $(E,p_1, \cdots, p_{10},L)$ for which $E$ is smooth. Then the Chow ring $A^*(\mathcal{M}_E^{\sm})$ is finitely generated.
\end{lemma}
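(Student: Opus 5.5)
The plan is to describe $\mathcal{M}_E^{\sm}$ explicitly in terms of $\mathcal{M}_{1,10}$ (or $\mathcal{M}_{1,n}$-type spaces), whose Chow rings are known to be finitely generated (indeed tautological). When $E$ is smooth of genus one and $L$ has degree $5$, the condition $L^{\otimes 2}\cong \mathcal O_E(p_1+\cdots+p_{10})$ determines $L$ up to the $25$ square roots of $\mathcal O_E(p_1+\cdots+p_{10})$, i.e.\ up to translation by $\mathrm{Pic}^0(E)[2]$ twisted appropriately. More precisely, $L$ is a degree-$5$ line bundle with $L^{\otimes 2}$ prescribed, so the set of such $L$ is a torsor under $E[2]\cong \Pic^0(E)[2]$, which has $4$ elements. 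Moreover, every degree-$5$ line bundle on a genus one curve is very ample and embeds $E$ as a smooth elliptic normal quintic, which is configuration (1). Hence the forgetful map
\[
\mathcal{M}_E^{\sm}\longrightarrow \mathcal{M}_{1,10},\qquad (E,p_1,\ldots,p_{10},L)\mapsto (E,p_1,\ldots,p_{10})
\]
is finite étale of degree $4$, since choosing a square root of a line bundle is an étale condition. Only the source/target need care: ordered distinct points in the smooth locus are exactly the points of $\mathcal{M}_{1,10}$.

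The key steps are as follows. First, identify $\mathcal{M}_E^{\sm}$ with the stack of square roots of the line bundle $\mathcal{O}(\sigma_1+\cdots+\sigma_{10})$ on the universal curve over $\mathcal{M}_{1,10}$, so that it is a finite étale cover. Second, decompose or trivialize the cover. Using $p_1$ as an origin, $L\cong \mathcal O_E(5p_1)\otimes N$ with $N\in\Pic^0(E)$ and $N^{\otimes 2}\cong \mathcal O_E(\sum p_i-10p_1)$. Writing $q=\sum(p_i - p_1)$ via the group law, the solutions are $N=\mathcal O(r-p_1)$ with $2r=q$. So $\mathcal{M}_E^{\sm}$ is the preimage of $\mathcal{M}_{1,10}$ under the multiplication-by-$2$ map in the fibre: concretely it is an open substack of the fibred product $\mathcal{M}_{1,10}\times_{\mathcal{C}_{1,1}}\mathcal{C}_{1,1}$, where $\mathcal{C}_{1,1}\to\mathcal{C}_{1,1}$ is $[2]$. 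Equivalently, it is an open substack of the space of $(E,p_1,\ldots,p_{10},r)$ with one relation $2r=\sum(p_i-p_1)$. Solving the relation for $p_{10}$ instead, as $p_{10}=2r-\sum_{i<10}(p_i-p_1)+p_1$ in the group law, identifies $\mathcal{M}_E^{\sm}$ with an open substack of $\mathcal{C}_{1,9}$, i.e.\ of $\mathcal{M}_{1,10}$ with $r$ as a free tenth point. This is cut out by the conditions that $p_{10}$ is distinct from the others, which is an open condition.

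Third, conclude. The space $\mathcal{M}_{1,n}$ has finitely generated (tautological) Chow ring for small $n$, and open substacks have Chow rings that are quotients by excision. Since an open substack of $\mathcal{M}_{1,10}$ is again a quotient, $A^*(\mathcal{M}_E^{\sm})$ is finitely generated.

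The main obstacle is justifying finite generation of $A^*(\mathcal{M}_{1,10})$, or of the relevant open piece, with rational coefficients. For $n\le 10$ I would cite Belorousski's result that $A^*(\mathcal{M}_{1,n})$ is tautological for $n\le 10$. If that citation were unavailable, I would instead stratify by the locus in $\mathcal{C}_{1,1}^{10}$ and use that fibre products of the universal elliptic curve have Chow groups generated by finitely many classes. Care is also needed that the isomorphism with the open part of $\mathcal{M}_{1,10}$ is an isomorphism of stacks, with automorphisms matching. This holds because automorphisms fixing $p_1$ act compatibly on $r$ and $L$.
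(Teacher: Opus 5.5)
Your strategy is the same as the paper's. You trade $L$ for a point of $E$: you use $\mathcal O_E(r)\cong L(-4p_1)$, the paper uses $\mathcal O_E(q)\cong L(-p_1-\cdots-p_4)$. You then solve $L^{\otimes 2}\cong\mathcal O_E(p_1+\cdots+p_{10})$ for $p_{10}$ and realize $\mathcal{M}_E^{\sm}$ as an open substack of the universal curve $\mathcal{C}_{1,9}$ over $\mathcal{M}_{1,9}$. Up to that point your argument is correct.

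\textbf{The gap is in your last step.} There you treat $\mathcal{C}_{1,9}$ as if it were $\mathcal{M}_{1,10}$ and conclude from Belorousski's computation of $A^*(\mathcal{M}_{1,10})$ alone. But $r$ is allowed to coincide with $p_1,\dots,p_9$, so $\mathcal{M}_E^{\sm}\subset\mathcal{C}_{1,9}$ is not an open substack of $\mathcal{M}_{1,10}$.

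For example, take $r=p_1$. Then $L\cong\mathcal O_E(5p_1)$, and the relation forces $p_{10}=-(p_2+\cdots+p_9)$ in the group law with origin $p_1$. For general $p_2,\dots,p_9$ this point is distinct from all the other markings. So $\mathcal{M}_E^{\sm}$ meets each section divisor $D_i=\{r=p_i\}$, and an ``open substack of $\mathcal{M}_{1,10}$'' argument does not apply.

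What is missing is finite generation of $A^*(\mathcal{C}_{1,9})$ itself. The divisors $D_i=\sigma_i(\mathcal{M}_{1,9})\cong\mathcal{M}_{1,9}$ are pairwise disjoint, and their complement is $\mathcal{M}_{1,10}$. Excision therefore gives
\[
\bigoplus_{i=1}^{9} A^{*-1}(\mathcal{M}_{1,9})\xrightarrow{\ \sum_i(\sigma_i)_*\ } A^*(\mathcal{C}_{1,9})\longrightarrow A^*(\mathcal{M}_{1,10})\longrightarrow 0 .
\]
You then need $A^*(\mathcal{M}_{1,9})=A^*(\mathcal{M}_{1,10})=\mathbb{Q}$; note that you need Belorousski's result for $n=9$ as well. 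With this, $A^*(\mathcal{C}_{1,9})$ is spanned by $1,[D_1],\dots,[D_9]$, and restricting to the open substack $\mathcal{M}_E^{\sm}$ finishes the proof. This is exactly how the paper closes the argument.
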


\textit{Proof}. Let $\mathcal{C}_{1,9}$ be the universal curve over $\mathcal{M}_{1,9}$, with sections $\sigma_1, \dots, \sigma_9$. $\mathcal{C}_{1,9}$ parametrizes tuples $(E, p_1, \dots, p_9,q)$, where $q$ may collide with $p_i$ for some $1 \leq i \leq 9$.
For a tuple in $\mathcal{M}_E^{\sm}$,  $L(-p_1- \cdots -p_4)$ determines a unique point $q$ of $E$. Conversely, we take a tuple $(E, p_1, \dots, p_9,q)$ in $\mathcal{C}_{1,9}$. The relation $L^{\otimes 2}=\mathcal{O}(p_1 + \cdots +p_{10})$ determines a unique point $p_{10}$ satisfying \[
\mathcal O_E(p_{10})\cong\mathcal O_E \bigl(2q+p_1+\cdots+p_4-p_5-\cdots-p_9\bigr).
\] 
Therefore, we have $\mathcal{M}_E^{\sm} \subset \mathcal{C}_{1,9}$ as an open stack. It suffices to prove $A^*(\mathcal{C}_{1,9})$ is finitely generated.
Set $D_i:=\sigma_i(\mathcal M_{1,9}).$
The divisors $D_1,\ldots,D_9$ are disjoint, and
\[
\mathcal C
_{1,9}\setminus\bigcup_{i=1}^{9}D_i
\cong\mathcal M_{1,10}.
\]
We have the excision sequence \[
\bigoplus_{i=1}^{9} A^*(\mathcal M_{1,9})
\xrightarrow{\;\sum_i(\sigma_i)_*\;} A^*(\mathcal C_{1,9}) \longrightarrow A^*(\mathcal M_{1,10}) \longrightarrow 0.\]

Since $A^*(\mathcal{M}_{1,9})=A^*(\mathcal{M}_{1,10})=\mathbb{Q}$ by \cite{B}, we conclude that $A^*(\mathcal{C}_{1,9})$ is generated by classes $\bigl\{[D_i]\bigr\}_{1 \leq i \leq 9}$. In particular, the Chow ring $A^*(\mathcal{C}_{1,9})$ is finitely generated. \qed

Let\[
Z_E=\mathcal M_E\setminus\mathcal M_E^{\sm}.
\]
Stratify $Z_E$ by the configuration types in Table \ref{tab:configurations} and the distribution of the ordered marked points with strata $Z_\alpha$. 

\begin{lemma}\label{nonsm locus}
    The Chow group $A^*(Z_\alpha)$ is trivial for every $\alpha$. Furthermore, the Chow group $A^*(Z_E)$ is generated as a $\mathbb{Q}$-vector space by the fundamental classes of ${Z_\alpha}$.
\end{lemma}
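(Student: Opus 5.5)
We plan to show that each stratum $Z_\alpha$ is an open substack of a quotient $[U_\alpha/G_\alpha]$, with $U_\alpha$ open in an affine space and $G_\alpha$ a torus or a unipotent-by-torus group, such that the equivariant Chow ring is generated by pullbacks from $BG_\alpha$ which vanish after restriction. The strategy mirrors Lemma \ref{sm locus}. Fixing $\alpha$ fixes three things: the configuration type from Table \ref{tab:configurations}, the assignment of the ordered points $p_1,\dots,p_{10}$ to components, and the degrees of $L$ on the components. Every component has normalization $\bb^1$, since configuration (1) is exactly the smooth locus. So a tuple in $Z_\alpha$ is determined by three pieces of data. The first is the positions of the special points on each $\bb^1$: the preimages of the nodes, or the branches of the elliptic $m$-fold point. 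The second is the gluing data at the singularities. For a node this is nothing. For an elliptic $m$-fold point it is the choice of a hyperplane in the span of the branch tangent directions, and we use the local-ring descriptions from the Appendix here. The third is the marked points on each component.

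Given $\alpha$, we will check that $\Pic$ of $E$ restricted to the fixed multidegree is either a torsor under $\mathbb{G}_m$ or $\mathbb{G}_a$, or a point. This gives the following cases.
\begin{itemize}
\item For the cycle configurations (2)--(8), $\Pic^{\underline d}(E)\cong\mathbb{G}_m$.
\item For the elliptic $m$-fold configurations (9)--(15), $\Pic^{\underline d}(E)\cong\mathbb{G}_a$.
\end{itemize}
The condition $L^{\otimes 2}\cong\mathcal O_E(p_1+\cdots+p_{10})$ then determines $L$ up to the finite $2$-torsion, or uniquely in the $\mathbb{G}_a$ case since the characteristic is not $2$. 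Conversely, it imposes one condition on the points: the class of $\mathcal O_E(\sum p_i)$ in $\Pic^{\underline{2d}}$ must be a square, which is automatic. So $L$ is a finite (étale) choice over the data of the points. We will therefore identify $Z_\alpha$ with a finite étale cover of an open substack $W_\alpha$ of
\[
\Bigl[\prod_{j}(\bb^1)^{n_j}\Big/\prod_j \mathrm{Aut}(\bb^1)\Bigr]\times(\text{gluing data}),
\]
possibly quotiented further by the finite automorphisms of the dual graph. The products run over components $j$ carrying $n_j$ special and marked points. Very ampleness of $L$ and distinctness of the points are open conditions.

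Next we will show $A^*(W_\alpha)=\mathbb{Q}$, in the sense that it is spanned by the fundamental class. Each component carries at least three special or marked points: stability of the configuration forces at least two special points on every component, and the distribution of the ten points adds more. So we can use $PGL_2$ to fix three of them at $0,1,\infty$. The remaining data then form an open subset of an affine space, possibly times a torus acting on the gluing data, which rigidifies the $\mathbb{G}_m$ or $\mathbb{G}_a$ scaling at the elliptic point. An open subset of affine space has Chow group generated by the fundamental class. For a quotient of such an open set by a torus acting with a dense free orbit structure, we argue by excision as in Lemma \ref{sm locus}, or by reduction to the known computation $A^*(\mathcal{M}_{0,n})=\mathbb{Q}$ in positive degree. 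Rational coefficients kill the finite covers and the finite quotients, since pushforward and pullback along a finite map of degree $d$ compose to multiplication by $d$. This gives $A^*(Z_\alpha)$ trivial, meaning $\mathbb{Q}\cdot[Z_\alpha]$.

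Finally, the strata $Z_\alpha$ are finitely many, locally closed, and cover $Z_E$. Ordering them by dimension and applying the excision sequence repeatedly, as in Lemma \ref{sm locus}, shows that $A^*(Z_E)$ is generated by the pushforwards of $A^*(Z_\alpha)$. By the previous step these are the classes $[\overline{Z_\alpha}]$.

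The main obstacle is the elliptic $m$-fold strata. There we must verify that the moduli of the singularity's gluing data, for fixed branches, is an open subset of a linear space modulo torus scaling. We must also check that the $PGL_2$ normalization on each branch component is compatible with this data, so that no nontrivial $\mathbb{G}_m$-equivariant classes survive. We would first try to treat the gluing hyperplane as a point of $\bb^{m-2}$ in an explicit chart from the Appendix.
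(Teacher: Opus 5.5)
Your strategy is the paper's: rigidify each stratum by fixing points on the normalization, recognize it as an open subset of an affine space or torus, and assemble $A^*(Z_E)$ by excision. However, the passage from $W_\alpha$ to $Z_\alpha$ in the nodal strata (2)--(8) is not justified. There $Z_\alpha\to W_\alpha$ is the $\mu_2$-torsor of square roots of $\mathcal O_E(p_1+\cdots+p_{10})$ in $\Pic^{\underline d}(E)\cong\mathbb G_m$. That is a finite \'etale \emph{cover} of a space whose Chow group is spanned by its fundamental class. The identity $\pi_*\pi^*=\deg\pi$ only makes $\pi^*$ injective and $\pi_*$ surjective. It lets you \emph{descend} triviality to a finite quotient, but not lift it along a cover: a double cover of an open subset of $\mathbb A^1$ can be an affine open subset of an elliptic curve, whose rational Picard group need not vanish. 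So as written you have not shown $A^*(Z_\alpha)=\mathbb Q\cdot[Z_\alpha]$. The repair is to look at this particular cover. With the nodes at $0,\infty$ and one marking at $1$ on each component, it is $\{s^2=t_{10}\prod_j x_j\}$. This is the graph of $t_{10}=s^2/\prod_j x_j$ over $(x_1,\dots,x_{9-r},s)\in\mathbb G_m^{10-r}$, hence itself a torus. This is how the paper argues: it treats $L$ (the coordinate $s$) as free and solves for the last marking, so the orientation double cover $\widetilde Z_\alpha$ is open in $\mathbb G_m^{10-r}$. Rational coefficients are then used only in the legitimate direction, to pass to the quotient $Z_\alpha$.

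In the elliptic strata (9)--(15) $L$ is unique, so this issue disappears, but your setup needs two corrections. First, each component there carries only one special point, so ``stability forces two special points'' is false. What gives you three points per component is that $\mathcal O_E(\sum p_i)$ has multidegree $2\underline d$, so each component carries $2d_i\geq 2$ markings. Second, the gluing data is not a product factor, since $\prod\mathrm{Aut}(\bb^1_i)$ acts on it. Use $PGL_2$ on each component to put the branch point at $0$ and two markings at $1,\infty$. Then the gluing condition $\sum_i c_if_i'(0)=0$, with all $c_i\neq0$, is a free point $[c_1:\cdots:c_m]\in\mathbb G_m^{m-1}\subset\bb^{m-1}$. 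Note this is $\bb^{m-1}$, not $\bb^{m-2}$; for instance, a tacnode has a $\mathbb G_m$ of gluings. So the stratum is open in $\mathbb A^{10-2m}\times\mathbb G_m^{m-1}$, of dimension $9-m$ as in the paper. The paper reaches the same count the other way around. It normalizes all $c_i=1$, then uses the residual group ($m$ translations and a common scaling) to put one marking per component at $\infty$ and one more at $1$. Your final excision step assembling $A^*(Z_E)$ from the strata is fine.
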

\textit{Proof}.
\textbf{Case 1}. The curve $E$ is one of the nodal configurations (2)-(8) in Table \ref{tab:configurations}. 

Fix the multidegree and the distribution of the ordered markings. Choosing an orientation of its dual cycle gives a degree 2 \'etale cover $q:\widetilde Z_\alpha\longrightarrow Z_\alpha$. Let $r$ be the number of nodes. On each component $\bb^1$, we may fix one of the points to be at 1. The remaining markings among
$p_1,\dots,p_{9-r}$ have coordinates $x_1,\dots,x_{9-r}\in\mathbb G_m$. Since $\Pic^0(E)=\mathbb{G}_m$ by appendix \ref{Pic}, the line bundle $L$ is specified by some $s \in \mathbb{G}_m$. The condition $L^{\otimes2}\simeq\mathcal O_E(p_1+ \cdots + p_{10})$ becomes
\[
s^2=t_{10}\prod_{j=1}^{9-r}x_j.
\]
Thus each piece of $\widetilde Z_E$ with fixed oriented
combinatorial data is an open subset of the graph
$$\Gamma= \left\{(x_1,\ldots,x_{9-r},s,t_{10})\in\mathbb G_m^{11-r}\ \middle|t_{10}=\frac{s^2}{x_1\cdots x_{9-r}}\right\}
\simeq\mathbb G_m^{10-r}.$$
It is an open inclusion because we need all the points to be distinct. Therefore, we conclude that $A^*(\widetilde Z_\alpha)$ is trivial. In particular, $A^*(Z_\alpha)$ is also trivial.\\

\textbf{Case 2}. The curve $E$ is one of the cases (9)--(15) in Table \ref{tab:configurations}, i.e., with an elliptic $m$-fold point.

From appendix \ref{Pic}, we know that $\Pic^0(E)=\mathbb{G}_a$, thus $L$ is uniquely determined after choosing the 10 marked points. Fix the multidegree and the distribution of the ordered markings. On its normalization, place the preimage of the singular point
at $0$. Since each automorphism of $\bb^1$ must preserve $0$ and the direction of the tangent line at $0$, one can assume one marking of each component at $\infty$ and one more marking on one of the components at $1$. The remaining $9-m$ markings then vary in an open subset of $\mathbb G_m^{9-m}$. Therefore, we conclude that $A^*(Z_\alpha)$ is trivial. \qed \\

\textit{Proof of Proposition \ref{finite}}. The conclusion follows from lemma \ref{sm locus}, lemma \ref{nonsm locus}, and the excision sequence $$A^*(Z_E) \rightarrow A^*(\mathcal{M}_E) \rightarrow A^*(\mathcal{M}_E^{\sm}) \rightarrow 0.$$ \qed

\begin{prop}\label{prop6.5}
    The Chow ring $A^*(\mathcal{M}')$ is finitely generated.
\end{prop}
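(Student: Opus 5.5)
The plan is to combine the diagram \eqref{generator} with Proposition \ref{finite}. There are three steps.

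\textbf{Step 1.} Because $\mathcal V$ is a rank $6$ vector bundle on $\mathcal M_E$, the Grassmann bundle $G(3,\mathcal V)\to\mathcal M_E$ has Chow ring generated over $A^*(\mathcal M_E)$ by the Chern classes of the tautological subbundle. This is the standard projective/Grassmann bundle formula, equivalently the splitting principle. Since $A^*(\mathcal M_E)$ is finitely generated by Proposition \ref{finite}, $A^*(G(3,\mathcal V))$ is finitely generated as well.

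\textbf{Step 2.} I would realize $\mathcal M''$ as an open substack of $G(3,\mathcal V)$. A point of $G(3,\mathcal V)$ over $(E,p_1,\dots,p_{10},L)$ is a $3$-dimensional space of quadrics in $\bb(H^0(L)^\vee)\cong\bb^4$ vanishing at $\Gamma=\{p_1,\dots,p_{10}\}$. The locus where these three quadrics cut out a smooth complete intersection curve $C$ of genus $5$ is open. Inside it, the further conditions
\begin{itemize}
\item $C$ is not trigonal (automatic for a smooth complete intersection of three quadrics),
\item the $p_i$ are distinct points of $C$,
\item every $9$ of them impose independent conditions on quadrics,
\end{itemize}
are also open. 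On this open set, the tuple $(C,p_1,\dots,p_{10},E)$ is exactly a point of $\mathcal M''$. Theorem \ref{thm} shows that this open set is nonempty over each stratum and that its image in $\mathcal M'$ is everything. By excision, $A^*(\mathcal M'')$ is a quotient of $A^*(G(3,\mathcal V))$, hence finitely generated.

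\textbf{Step 3.} The forgetful map $\phi\colon\mathcal M''\to\mathcal M'$ is proper and surjective, as noted above. With rational coefficients, a proper surjective morphism of Deligne--Mumford stacks induces a surjection $\phi_*\colon A_*(\mathcal M'')\to A_*(\mathcal M')$. Using the projection formula $\phi_*(\phi^*\alpha\cdot\beta)=\alpha\cdot\phi_*\beta$, I would then pass from this statement about groups to one about the ring.

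I expect Step 3 to be the main obstacle, because pushforward is not a ring map. A surjection from a finitely generated ring does not immediately make $A^*(\mathcal M')$ finitely generated as an algebra. I would handle it as follows.

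\begin{itemize}
\item First, show that $\phi$ is generically finite. Given $(C,\Gamma)$ in $\mathcal M'$, the curve $E$ is recovered from the base locus of $H^0(\mathcal I_\Gamma(2))$ on a component, so there are only finitely many choices of $E$.
\item Hence $A^*(\mathcal M')$ is spanned as a $\mathbb Q$-vector space by $\phi_*$ of monomials in finitely many generators.
\item Each stratum of $Z_E$ has trivial Chow group by Lemma \ref{nonsm locus}, and the smooth part comes from $\mathcal C_{1,9}$ with Chow ring spanned by the $[D_i]$. Together with the Grassmann bundle relations, this makes $A^*(G(3,\mathcal V))$, and hence $A^*(\mathcal M'')$, finite-dimensional over $\mathbb Q$: the generators are the $[D_i]$ over $\mathcal M_E$ plus the Chern classes of the tautological bundle.
\item Therefore $A^*(\mathcal M')$ is finite-dimensional, and in particular finitely generated as a $\mathbb Q$-algebra.
\end{itemize}

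If that finite-dimensionality fails, the fallback is to use $\phi_*$ surjectivity together with the projection formula. Pulling back the tautological classes of $\mathcal M'$, this shows $A^*(\mathcal M')$ is a finite module over the subring generated by those classes and the finitely many images $\phi_*(m)$ of the monomials $m$. That suffices for finite generation.
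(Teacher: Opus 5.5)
Your Steps 1 and 2, and the use of the proper surjective forgetful map $\phi\colon\mathcal M''\to\mathcal M'$, are exactly the paper's argument: $\mathcal M''$ is an open substack of the Grassmann bundle $G(3,\mathcal V)\to\mathcal M_E$, combined with Proposition~\ref{finite}. The difference is in Step~3. The paper simply says that $\phi$ induces a ``surjective morphism of Chow rings'' and stops. You correctly note that $\phi_*$ is only additive, and you close the gap with finite-dimensionality.

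That argument is sound given the paper's lemmas:
\begin{itemize}
\item $A_*(\mathcal C_{1,9})$ is spanned by the fundamental class and the $[D_i]$.
\item $A_*(Z_E)$ is spanned by the finitely many classes $[\overline{Z_\alpha}]$. Include these, not just the $[D_i]$, in your list of spanning classes for $A^*(\mathcal M_E)$.
\item $A_*(G(3,\mathcal V))$ is free of rank $\binom{6}{3}=20$ over $A_*(\mathcal M_E)$.
\item Both restriction to the open $\mathcal M''$ and $\phi_*$ are surjective.
\end{itemize}
It also gives more than the literal statement. In the excision argument proving Theorem~\ref{theorem}, what is needed is that the pushforward of $A_*(\mathcal M')$ lies in a finitely generated subring. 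Finite-dimensionality gives this; mere finite generation of $A^*(\mathcal M')$ as an algebra would not.

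\textbf{Two corrections.} First, your justification of generic finiteness is wrong. The converse part of the proof of Theorem~\ref{thm} shows that the base locus of $H^0(\mathcal I_\Gamma(2))$ is exactly $\Gamma$, so $E$ is not a component of it. $E$ is cut out only by the $5$-dimensional subspace $H^0(\mathcal I_E(2))$ of this $6$-dimensional space. Generic finiteness does hold, by comparing $\dim G(3,\mathcal V)=10+9=19=\dim\mathcal M'$. But you never need it, because a proper surjective morphism already induces a surjection on rational Chow groups; delete that bullet.

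Second, your fallback via the projection formula is not valid as stated. Without knowing that $A_*(\mathcal M'')$ is a finite module over $\phi^*$ of some finitely generated subring of $A^*(\mathcal M')$, there are infinitely many monomials $m$ and nothing controls the classes $\phi_*(m)$. Since the finite-dimensional route works, drop the fallback.
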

\textit{Proof}. This follows from proposition \ref{finite}, and the composition \eqref{generator}, together with the argument following \eqref{generator}. \qed\\

\textit{Proof of theorem \ref{theorem}}. Let $U_{10}$ be the locus where 10 points impose independent conditions. We have the exact sequences $$A^*(\mathcal{M}_\omega) \rightarrow A^*(\mathcal{M}_{5,10} \setminus \mathcal{M}_{5,10}^3) \rightarrow A^*((\mathcal{M}_{5,10} \setminus \mathcal{M}_{5,10}^3)\setminus\mathcal{M}_\omega) \rightarrow 0,$$
$$A^*( \mathcal{M}') \rightarrow A^*((\mathcal{M}_{5,10} \setminus \mathcal{M}_{5,10}^3)\setminus \mathcal{M}_\omega) \rightarrow A^*(U_{10}) \rightarrow 0.$$
Then theorem \ref{theorem} follows from corollary 2.9 in \cite{Liu}, proposition \ref{prop6.5} and proposition \ref{prop2.1}. \qed

\newpage

\appendix

\section{Equations and specializations of cubic scrolls}

\subsection{Defining equations for the surfaces $S_i$ }
In Section \ref{scrolls}, we prove that each $S_i$ is unique up to projective equivalence for $1 \leq i \leq 6$. In table $\ref{tab:equations}$ we give the equations which cut out each $S_i$.

\begin{table}[htbp]
\centering
\small
\renewcommand{\arraystretch}{1.5}

\caption{Equations of $S_i$ with homogeneous coordinates
$[x:y:z:w:r]$.}
\label{tab:equations}

\begin{tabular}{@{}l@{\qquad}c@{}}
\toprule
Surface & Defining equations \\
\midrule
$S_1$ & $V(xr-wy,\;rz-xw,\;x^2-yz)$ \\
$S_2$ & $V(xr-wy,\;x^2-yr,\;r^2-wx)$ \\
$S_3$ & $V(yr-xz,\;wy,\;wx)$ \\
$S_4$ & $V(yr-x^2,\;wy,\;wx)$ \\
$S_5$ & $V(yr,\;wy,\;wx)$ \\
$S_6$ & $V(xy,\;wy,\;wx)$ \\
\bottomrule
\end{tabular}
\end{table}

\subsection{Flat families realizing the specializations}\label{A.2}

In Table \ref{tab:cubic-surface-specializations}, each row defines a one-parameter family. This is a flat family because each fiber has the same Hilbert polynomial $P(n)=\frac{3n^2+5n+2}{2}$. For the specialization $S_i \rightsquigarrow S_j$, the general fiber is $S_i$ and the central fiber is $S_j$.

\begin{table}[htbp]
\centering
\small
\renewcommand{\arraystretch}{1.6}

\caption{Explicit specializations.}
\label{tab:cubic-surface-specializations}

\begin{tabular}{@{}l@{\qquad}c@{}}
\toprule
Specialization & The corresponding specialization \\
\midrule

$S_1\rightsquigarrow S_2$
&
$\mathcal{S}_t
 =V\bigl(xr-wy,\;x^2-y(r+tz),\;r(r+tz)-wx\bigr)$
\\

$S_1\rightsquigarrow S_3$
&
$\mathcal{S}_t
 =V\bigl(tr^2-xw,\;xz-yr,\;yw-trz\bigr)$
\\

$S_2\rightsquigarrow S_4$
&
$\mathcal{S}_t
 =V\bigl(txr-wy,\;x^2-yr,\;tr^2-wx\bigr)$
\\

$S_3\rightsquigarrow S_4$
&
$\mathcal{S}_t
 =V\bigl(yr-x(x+tz),\;wy,\;wx\bigr)$
\\

$S_4\rightsquigarrow S_5$
&
$\mathcal{S}_t
 =V\bigl(yr-tx^2,\;wy,\;wx\bigr)$
\\

$S_5\rightsquigarrow S_6$
&
$\mathcal{S}_t
 =V\bigl(y(x+tr),\;wy,\;wx\bigr)$
\\

\bottomrule
\end{tabular}
\end{table}

\section{local ring description of configurations (9)--(15)}

In configurations (9)--(15) of Table \ref{tab:configurations}, the curve $E$ only has one singular point, and its normalization is 
\[
\nu \colon \widetilde E
   = \coprod_{i=1}^{m} \mathbb P^1_i
   \longrightarrow E.
\]

Choose coordinates $t_i$ on each normalization component $\bb^1_i$, such that $t_i(p_i)=0$. After suitable rescaling of these coordinates, the local ring of E at the singularity is given by 

\[
R_m :=
\left\{
(f_1,\ldots,f_m)\in\bigoplus_{i=1}^{m} k[t_i]_{(t_i)}
\ \middle|\
\begin{gathered}
f_1(0)=\cdots=f_m(0),\\
\sum_{i=1}^{m} f_i'(0)=0
\end{gathered}
\right\}.
\]

The conditions on the value of each $f_i$ at 0 impose $m-1$ independent linear conditions, and the derivative relation imposes
one more condition. Thus $\delta_p=m$.

Therefore, we have
\[
p_a(E)
 = \sum_{i=1}^{m} g(\mathbb P^1_i)- m + 1 + \delta_p 
 = 1.
\]

Therefore, the configurations (9)--(15) of Table \ref{tab:configurations} are all genus 1 curves of degree 5.

\begin{remark}
From the local ring description of configurations (9)--(15) in Table \ref{tab:configurations}, they are elliptic $m$-fold points according to Lemma 2.2 in \cite{S}.  
\end{remark}

\section{Uniqueness for each configuration of (2)--(15) in Table \ref{tab:configurations}}\label{Pic}
\subsection{$\Pic^d(E)$ for each configuration}

\begin{lemma}
Let $E$ be one of configurations (2)--(15). Fix a multidegree
$\mathbf d=(d_1,\ldots,d_r)$ on its normalization
\[
\nu:\widetilde E=\coprod_{i=1}^r\mathbb P^1_i\longrightarrow E.
\]
Then we have
\[
\operatorname{Pic}^{\mathbf d}(E)\simeq
\begin{cases}
\mathbb G_m, & \text{in configurations (2)--(8)},\\
\mathbb G_a, & \text{in configurations (9)--(15)}.
\end{cases}
\]
\end{lemma}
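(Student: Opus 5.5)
The plan is to compute $\operatorname{Pic}^{\mathbf d}(E)$ from the normalization sequence. Since $\operatorname{Pic}^{\mathbf d}(E)$ is a torsor under $\operatorname{Pic}^{\mathbf 0}(E)$, and it is nonempty (for instance the restriction of $\mathcal O_{\mathbb P^4}(1)$ has multidegree given by the component degrees, and twisting by smooth points adjusts the multidegree), it suffices to show $\operatorname{Pic}^{\mathbf 0}(E)\simeq\mathbb G_m$ or $\mathbb G_a$. For this we will use the exact sequence of sheaves on $E$
\[
1\to\mathcal O_E^*\to\nu_*\mathcal O_{\widetilde E}^*\to\nu_*\mathcal O_{\widetilde E}^*/\mathcal O_E^*\to 1,
\]
whose cokernel is a skyscraper supported on the singular points. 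Its long exact sequence reads
\[
1\to k^*\to (k^*)^r\to \bigoplus_{x\in E_{\mathrm{sing}}}(\widetilde{\mathcal O}_x^*/\mathcal O_{E,x}^*)\to \operatorname{Pic}(E)\to\operatorname{Pic}(\widetilde E)\to 0 .
\]
Here $H^0(\mathcal O_E^*)=k^*$ because $E$ is reduced, connected and proper. Restricting to the kernel of the multidegree map identifies $\operatorname{Pic}^{\mathbf 0}(E)$ with the cokernel of $(k^*)^r/k^*\to\bigoplus_x \widetilde{\mathcal O}_x^*/\mathcal O_{E,x}^*$, since $\operatorname{Pic}^{\mathbf 0}(\widetilde E)=0$ for a disjoint union of $\mathbb P^1$'s.

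In configurations (2)--(8), every singularity is an ordinary node. At a node, $\widetilde{\mathcal O}_x^*/\mathcal O_{E,x}^*\simeq k^*$: a pair of units $(u,v)$ on the two branches is sent to $u(x_1)/v(x_2)$. The dual graph is a single cycle with $r$ vertices and $r$ edges, so the local term is $(k^*)^r$. The map $(k^*)^r\to(k^*)^r$ is the coboundary of the cycle graph, $(\lambda_i)\mapsto(\lambda_i/\lambda_{i+1})$, up to orientation. Its image is the subgroup where the product of the coordinates is $1$, and its cokernel is $k^*\simeq\mathbb G_m$ via the product. The irreducible nodal case (2) is the same computation with $r=1$, one node, and the trivial map. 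The one point requiring care is to do this in families, so that we get an isomorphism of group schemes rather than merely of $k$-points. For that I would phrase the argument with the Picard functor, or simply cite the standard structure of generalized Jacobians: the torus part has rank $b_1$ of the dual graph, which here is $1$.

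In configurations (9)--(15), there is a single elliptic $m$-fold point with local ring $R_m$ as in the appendix. Using the description of $R_m$, I will compute $\widetilde{\mathcal O}_x^*/R_m^*$, where $\widetilde{\mathcal O}_x=\prod_i k[t_i]_{(t_i)}$. Modulo the conductor, this reduces to the finite-dimensional quotient
\[
\Bigl(\prod_i (k[t_i]/t_i^2)^*\Bigr)\Big/\bigl(\text{image of }R_m^*\bigr).
\]
The group $\prod_i(k[t_i]/t_i^2)^*$ has dimension $2m$: values $a_i\in k^*$ and first-order parts $b_i$. The units of $R_m$ have all $a_i$ equal and satisfy $\sum_i b_i/a_i=0$, so the quotient is $(k^*)^m/k^*\times k$. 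The second factor is given by $\sum_i b_i/a_i$ (the logarithmic derivative), and it is a $\mathbb G_a$ factor. Dividing further by the global constants $(k^*)^m$ from the components kills the $(k^*)^m/k^*$ factor. Since constants have zero derivative, they do not touch the $\mathbb G_a$ factor, so the cokernel is $\mathbb G_a$.

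The main obstacle is verifying that the conductor contains $\prod_i t_i^2k[t_i]_{(t_i)}$, so that the truncation to $k[t_i]/t_i^2$ is legitimate. I expect this follows from $\delta=m$ together with the explicit description of $R_m$: any tuple vanishing to order $2$ on every branch satisfies both defining conditions, and multiplying it by anything in $\widetilde{\mathcal O}_x$ keeps order $\geq2$. The same reasoning shows that $1+(\text{order}\geq 2)$ lies in $R_m^*$, which justifies the truncation.
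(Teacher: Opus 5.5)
Your proposal is correct and is essentially the paper's argument. The normalization sequence for $\mathcal{O}^*$ computes exactly the gluing data modulo componentwise rescaling that the paper uses: in the nodal case the coboundary image is the product-one subgroup $H$, and in the elliptic case the image of $R_m^*$ is $H_m$ inside $A_m^\times=\prod_i (k[\varepsilon_i]/(\varepsilon_i^2))^\times$. Your conductor check, that tuples vanishing to order $2$ on every branch lie in $R_m$, also justifies the truncation to first order, which the paper only asserts.
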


\begin{proof}
Now we fix a multidegree $\mathbf{d}$ and we classify the ways the line bundles $\mathcal O_{\mathbb P^1_i}(d_i)$ descend to $E$.\\

\textit{The nodal case.}
A descent is specified by nonzero gluing scalars $(\alpha_1,\ldots,\alpha_r)$ at all the nodes. Rescaling the line bundle on each component changes this tuple by an element of 
\[
H=\left\{(a_1,\ldots,a_r)\in(\mathbb G_m)^r:
                    \prod_{i=1}^r a_i=1\right\}.
\]
Therefore,
\[
\operatorname{Pic}^{\mathbf d}(E)
 \simeq (\mathbb G_m)^r/H
 \xrightarrow{\ \sim\ }\mathbb G_m,
\quad \text{given by the map }
\bigl[(\alpha_i)\bigr]\longmapsto\prod_{i=1}^r\alpha_i.
\]

\textit{The elliptic $m$-fold case.}
Since the singularity is an elliptic $m$-fold point, we have $r=m$. Choose the coordinate $t_i$ on each component such that the functions descend when their values agree and their first derivatives sum to zero. Since gluing constraints are at order 0 and 1, the gluing data are determined by units in
\[A_m=\bigoplus_{i=1}^m k[\varepsilon_i]/(\varepsilon_i^2).\]
Set \[H_m=\left\{(a+b_i\varepsilon_i)_{i=1}^m:a\in k^\times,\ \sum_{i=1}^m b_i=0\right\}\subset A_m^\times.\]
The gluing data are represented by units $\gamma_i+\beta_i\varepsilon_i$, where $\gamma_i\in k^\times$. Rescaling independently on
the normalization components, we may normalize $\gamma_i=1$ and write the gluing units as $1+\beta_i\varepsilon_i$.
Therefore, 
\[
\Pic^d(E)=
\frac{A_m^\times}{(k^\times)^m\cdot H_m}
\simeq \mathbb G_a, \quad \text{given by the map }
\bigl[(\beta_i)\bigr]\longmapsto\sum_{i=1}^m\beta_i.
\]
where $(k^\times)^m$  accounts for independent rescalings on each component, and $H_m$ accounts for changes of local trivialization at the singularity.
\end{proof}
\subsection{Uniqueness of configurations (2)--(15).}

We first consider the nodal configurations (2)--(8). Fix the multidegree $\mathbf d=(d_1,\ldots,d_r)$, where $d_i>0$ and $\sum_i d_i=5$. The normalization is $\nu:\coprod_{i=1}^r\mathbb P^1_i\longrightarrow E.$ Choose coordinates $t_i$ so that the preimages of the nodes are $0_i$ and $\infty_i$, with $\infty_i$ identified with $0_{i+1}$, cyclically. After fixing $0$ and $\infty$ on each component $\bb^1$ (in the normalization), its automorphism can only be the scaling map $t_i \rightarrow \lambda_it_i$. Recall that the gluing is described by scalars $\alpha_i \in k^\times$, which corresponds to the element \[
q=\prod_{i=1}^r\alpha_i,
\]
under the identification $\Pic^\mathbf{d} \simeq \mathbb{G}_m$. Therefore, this automorphism acts on $\Pic^\mathbf{d}$ by:  \[
q\longmapsto \left(\prod_{i=1}^r\lambda_i^{d_i}\right)q.\]

Since our base field $k$ is assumed to be algebraically closed, we conclude that this action is transitive. Therefore, each nodal configurations of (2)--(8) is unique up to automorphism of $\bb^4$. 

Now we consider the configurations (9)--(15). Fix the multidegree $\mathbf d=(d_1,\ldots,d_m)$, where $d_i>0$ and $\sum_i d_i=5$. The normalization is $\nu:\coprod_{i=1}^m\mathbb P^1_i\longrightarrow E.$ Choose coordinates $t_i$ so that the preimage of the singularity is $\infty$. On each component, consider the automorphisms restricting to identity on $2\infty$. Any such automorphism can only be the translation $t_i \rightarrow t_i+\tau_i$. Recall that the gluing data is described by $\{1+\beta_i\varepsilon_i\}_i \in A_m^\times$ which corresponds to the element \[
q=\sum_{i=1}^m\beta_i,
\]
under the identification $\Pic^\mathbf{d} \simeq \mathbb{G}_a$. Therefore, this automorphism acts on $\Pic^\mathbf{d}$ by:  \[
q\longmapsto q+\left(\sum_{i=1}^m d_i\tau_i\right).\]

We conclude that this action is transitive. Therefore, each elliptic $m$-fold configuration of (9)--(15) is unique up to automorphism of $\bb^4$.

\section{Equations for configurations (2)--(15)}\label{quadrics}
In this section, we write down the explicitly equations cutting out the configurations (2)--(15). Denote the homogeneous coordinates of $\bb^4$ by $[x:y:z:w:r]$.
\begingroup
\small
\renewcommand{\arraystretch}{1.7}
\setlength{\LTleft}{\fill}
\setlength{\LTright}{\fill}
\setlength{\LTcapwidth}{\textwidth}

\begin{longtable}{@{}c@{\qquad}c@{}}
\caption{Equations for configurations (2)--(15) in Table \ref{tab:configurations}.}
\label{tab:configuration-equations}\\

\toprule
Configuration & Defining equations \\
\midrule
\endfirsthead

\toprule
Configuration & Defining equations \\
\midrule
\endhead

\bottomrule
\endfoot

$(2)$ &
$V\!\left(
r^2-xw,\ xz-ry,\ yw-rz,\
y^2-rw-r^2,\ yz-w^2-rw
\right)$ \\

$(3)$ &
$V\!\left(y^2-xz,\ yz-xw,\ z^2-yw,\ zw-yr,\ w^2-zr\right)$ \\

$(4)$ &
$V\!\left(yw,\ yr,\ xz-y^2-wr,\ xr-w^2,\ zw-r^2\right)$ \\

$(5)$ &
$V\!\left(xw,\ yr,\ wr,\ yz-w^2,\ xz-r^2\right)$ \\

$(6)$ &
$V\!\left(yw,\ yr,\ xr-w^2,\ zw-r^2,\ xz-wr\right)$ \\

$(7)$ &
$V\!\left(xz,\ yw,\ yr,\ zr,\ xw-r^2\right)$ \\

$(8)$ &
$V\!\left(xz,\ xw,\ yw,\ yr,\ zr\right)$ \\

$(9)$ &
$V\!\left(r^2-xw,\ xz-ry,\ yw-rz,\ y^2-rw,\ yz-w^2\right)$ \\

$(10)$ &
$V\!\left(yz-xw,\ yw-xr,\ z^2-xr,\ zw-yr,\ w^2-zr\right)$ \\

$(11)$ &
$V\!\left(y^2-xz-xw,\ yw-xr,\ w^2-yr,\ zw,\ zr\right)$ \\

$(12)$ &
$V\!\left(yz-xw,\ yw-xr,\ zw-xr,\ yr-zr,\ w^2-yr\right)$ \\

$(13)$ &
$V\!\left(yz-xr,\ z^2-yz-xw,\ yw,\ yr-zr,\ wr\right)$ \\

$(14)$ &
$V\!\left(yz-xr,\ yw-xr,\ zw-xr,\ yr-zr,\ yr-wr\right)$ \\

$(15)$ &
$V\!\left(yz-yw,\ yz-yr,\ yz-zw,\ yz-zr,\ yz-wr\right)$ \\

\end{longtable}
\endgroup

In particular, we prove that the ideal sheaf of each configuration of (2)--(15) is generated by five quadrics. By \cite{SD}, the ideal sheaf of configuration (1) is also generated by five quadrics.


\begin{thebibliography}{10}

\bibitem{Coskun}
I. Coskun, \textit{Degenerations of surface scrolls and the Gromov--Witten invariants of Grassmannians}, J. Algebraic Geom. \textbf{15} (2006), no. 2, 223--284. MR 2199064

\bibitem{Liu}
Y. Liu, \textit{On the Chow rings of the moduli spaces $\mathcal{M}_{5,8}$ and $\mathcal{M}_{5,9}$}, arXiv:2509.02950.

\bibitem{B}
P. Belorousski, \textit{Chow rings of moduli spaces of pointed elliptic curves}, Ph.D. thesis, University of Chicago, 1998.

\bibitem{M4}
C. Faber, \textit{Chow rings of moduli spaces of curves. II. Some results on the Chow ring of $\overline{\mathcal{M}}_4$}, Ann. of Math. (2) \textbf{132} (1990), no. 3, 421–449. MR 1078265

\bibitem{M5}
E. Izadi, \textit{The Chow ring of the moduli space of curves of genus 5}, The moduli space of curves (Texel Island, 1994), Progr. Math., vol. 129, Birkhäuser Boston, Boston, MA, 1995, pp. 267–304. MR 1363060

\bibitem{M6}
N. Penev and R. Vakil, \textit{The Chow ring of the moduli space of curves of genus six},  Algebr. Geom.
\textbf{2} (2015), no. 1, 123–136. MR 3322200

\bibitem{M789}
S. Canning and H. Larson, \textit{The Chow rings of the moduli spaces of curves of genus 7, 8, and 9}, J. of Algebraic Geom. \textbf{33} (2024), no.1, 55–116, MR4693574

\bibitem{HS}
S. Canning and H. Larson, \textit{On the Chow and cohomology rings of moduli spaces of stable curves}, J. Eur. Math. Soc. \textbf{28} (2026), no.~9, 3869--3918. 


\bibitem{S}
D.~I. Smyth, \textit{Modular compactifications of the space of pointed elliptic curves I}, Compos. Math. \textbf{147} (2011), no.~3, 877--913. MR~2801404.

\bibitem{SD}
B. Saint-Donat, \textit{On Petri's analysis of the linear system of quadrics through a canonical curve}, Math. Ann. \textbf{206} (1973), 157--175.

\end{thebibliography}
\end{document}